\numberwithin{equation}{section}  
\theoremstyle{plain}
\newtheorem{theorem}{Theorem}
\newtheorem*{theorem-non}{Theorem}
\newtheorem{corollary}[theorem]{Corollary}
\newtheorem{lemma}[theorem]{Lemma}
\newtheorem{proposition}[theorem]{Proposition}
\numberwithin{theorem}{section}
\theoremstyle{definition}
\newtheorem{definition}[theorem]{Definition}
\theoremstyle{remark}
\title{Derived Hecke action on the trivial cohomology of division algebras}          
\author{Soumyadip Sahu\\ School of Mathematics, Tata Institute of Fundamental Research\\ 1 Homi Bhabha Road, Mumbai, 400005, Maharashtra, India.\\ Email: soumyadip.sahu00@gmail.com}       
\date{} 
\begin{document}
\maketitle
	
\begin{abstract}
This article generalizes Venkatesh's structure theorem for the derived Hecke action on the Hecke trivial cohomology of a division algebra over an imaginary quadratic field to division algebras over all number fields. In particular, we show that the stable submodule of the Hecke trivial cohomology attached to a division algebra is a free module generated by the unit class for the action of the strict derived Hecke algebra. Moreover, the strict derived Hecke algebra possesses a rational form that preserves the canonical rational structure on the stable cohomology during the derived Hecke action. The main ingredients in our improvement are a careful study of the congruence classes in the torsion cohomology of the arithmetic manifold and the author's new result on the reduction map in the $K$-theory of the ring of integers in number fields.       
\end{abstract}
	
\textbf{MSC2020:} Primary 11F75, Secondary 55R40 
	
\textbf{Keywords:} derived Hecke algebra, Hecke trivial cohomology

\section{Introduction}
\label{section1}
In a recent development, Venkatesh and his collaborators introduced degree-raising operations on the cohomology of the arithmetic manifold and studied the effect of these actions in the light of motivic theories \cite{venkatesh}, \cite{harris-venkatesh}, \cite{prasanna-venkatesh}. One of the main constructions in this genre is the $\ell$-adic derived Hecke algebra \cite{venkatesh} that furnishes a graded extension of the Hecke action on the cohomology of the locally symmetric space. Roughly, the derived Hecke algebra acts by transporting Hecke eigenclasses between different cohomological degrees. In the context of tempered cohomological cuspidal representations, Venkatesh constructs a degree-raising action of the Bloch-Kato Selmer group (conjecturally, the motivic cohomology group) on cohomology using the derived Hecke algebra and formulates a conjecture regarding the rationality of this action. He also proves a structure theorem for the derived Hecke action on the trivial Hecke eigensystem associated with a division algebra over an imaginary quadratic field \cite[5]{venkatesh}, illustrating the rationality of the derived Hecke action beyond the tempered setting. The current article aims to study this result for division algebras over an arbitrary number field.    
	
Let $\mathsf{D}$ denote a centrally simple division algebra of dimension $d^2$ over a number field $F$. Suppose that $\mathsf{G} = \text{SL}_1(\mathsf{D})$, i.e., the subgroup of elements in $\mathsf{D}^{\times}$ having reduced norm $1$. Let $Y(\mathsf{K}_f)$ be the arithmetic manifold associated with $\mathsf{G}$ for a torsion-free level structure $\mathsf{K}_f$. We call a class in the singular cohomology of the $Y(\mathsf{K}_f)$ \textit{Hecke trivial} if it is a generalized eigenclass for the trivial Hecke character defined by $T \mapsto \text{deg}(T)$ (Definition~\ref{definition3.3}). Let $H^{*}\big(Y(\mathsf{K}_{f}), \cdot\big)_{\text{tr}}$ be the collection of the Hecke-trivial classes with coefficients in an appropriate ring. There is a general description of the Hecke-trivial cohomology due to Franke \cite{franke}, which prescribes an explicit topological model for the computation of the Hecke-trivial subspace. The point of working with a division algebra is that its arithmetic manifold $Y(\mathsf{K}_{f})$ is compact. Thus, one can invoke Matsushima's formula to see that $H^{*}\big(Y(\mathsf{K}_{f}), \mathbb{C}\big)_{\text{tr}}$ equals the free exterior algebra generated by invariant classes coming from the compact dual (Section~\ref{section3.4}). This description makes the trivial cohomology amenable to study and provides a platform for further $\ell$-adic studies. We restrict the choice of coefficient prime $\ell$ to primes outside an explicit finite subset $S$ during the investigation. In principle, one expects that the theory of $(\mathfrak{g},K)$ cohomology admits a generalization to describe the integral cohomology. Such a formalism should automatically allow us to deal with the torsion classes in the integral cohomology, but we neglect this prospect in the article. Moreover, to avoid technical complications we formulate our results in terms of the \textit{strict} global derived Hecke algebra (Section~\ref{section3.3.2}) which accommodates degree changing operations only at the primes $v$ satisfying $q_v \equiv 1($mod $\ell^n)$ where $q_v$ denotes the size of the residue field $v$. In Venkatesh's theory, the additional congruence condition has roots in the Taylor-Wiles deformation, and the strict avatar is sufficient for most arithmetic applications. Our primary reason for restricting the subset of primes is group cohomological, similar to his formulation of the derived Satake isomorphism (Theorem~3.3 in \cite{venkatesh}). For a rational prime $\ell$, let $\mathbb{T}'_{\ell}$ be the image of $\ell$-adic strict global derived Hecke algebra inside the ring of $\mathbb{Z}_{\ell}$-linear endomorphisms of $H^{*}\big(Y(\mathsf{K}_f), \mathbb{Z}_{\ell}\big)$. We want to generalize the following result: 
	
\begin{theorem} 
\label{theorem1.1}\emph{(Venkatesh)}
Let $\mathsf{D}$ be a centrally simple division algebra over an imaginary quadratic field $F$ and $\mathsf{G} = \emph{SL}_1(\mathsf{D})$. There exists a finite subset of rational primes $S$ so that for each $\ell \notin S$, the following holds: 
\begin{enumerate}[label =(\roman*), align=left, leftmargin=0pt]
\item The trivial cohomology $H^{\ast}\big(Y(\mathsf{K}_f), \mathbb{Z}_{\ell}\big)_{\emph{tr}}$ is stable under the action of $\mathbb{T}'_{\ell}$. Let $\mathbb{T}'_{\ell, \emph{tr}}$ denote the image of $\mathbb{T}'_{\ell}$ in the endomorphism ring of $H^{\ast}\big(Y(\mathsf{K}_f), \mathbb{Z}_{\ell}\big)_{\emph{tr}}$. Then $H^{\ast}\big(Y(\mathsf{K}_f), \mathbb{Z}_{\ell}\big)_{\emph{tr}}$ is a free $\mathbb{T}'_{\ell, \emph{tr}}$-module generated by the multiplicative unit in degree zero. 
\item The $\mathbb{Q}_{\ell}$-algebra of endomorphisms $\mathbb{T}_{\ell, \emph{tr}}' \otimes \mathbb{Q}_{\ell}$ coincides with the $\mathbb{Q}_{\ell}$-algebra generated by $H^{\ast}\big(Y(\mathsf{K}_f), \mathbb{Q}\big)_{\emph{tr}}$ acting on itself by cup product.    
\end{enumerate}
\end{theorem}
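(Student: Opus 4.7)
The plan proceeds in three stages: make the trivial cohomology completely explicit, manufacture degree-raising derived Hecke operators realizing its generators, and then match the two graded algebra actions rationally.

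First, I would pin down $H^{*}(Y(\mathsf{K}_f), \mathbb{Z}_\ell)_{\text{tr}}$ as an exterior algebra. Because $F$ is imaginary quadratic, $\mathsf{G}(F \otimes_\mathbb{Q} \mathbb{R}) \cong \text{SL}_d(\mathbb{C})$ with compact dual $\text{SU}(d)$, and $Y(\mathsf{K}_f)$ is compact. Matsushima's formula identifies $H^{*}(Y(\mathsf{K}_f), \mathbb{C})_{\text{tr}}$ with the space of invariant forms, which equals the exterior algebra $\Lambda_\mathbb{Q}(\omega_3, \omega_5, \ldots, \omega_{2d-1})$ on the primitive invariants of $\text{SU}(d)$. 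After enlarging $S$ to contain the rational primes dividing $d!$ and the denominators of a fixed integral model of those invariants, one gets $H^{*}(Y(\mathsf{K}_f), \mathbb{Z}_\ell)_{\text{tr}} \cong \Lambda_{\mathbb{Z}_\ell}(\omega_3, \ldots, \omega_{2d-1})$ as a graded algebra under cup product for every $\ell \notin S$.

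Second, for each $i \in \{2, \ldots, d\}$ I would produce a global derived Hecke operator $\tilde{T}^{(i)}$ of degree $2i-1$ whose action on the unit class $1 \in H^0$ yields $\omega_{2i-1}$ up to an $\ell$-adic unit. At primes $v$ with $q_v \equiv 1 \pmod{\ell^n}$, the strict derived Satake isomorphism presents the local strict derived Hecke algebra as an exterior algebra on classes pulled back from $H^{*}(T_v, \mathbb{Z}/\ell^n)$ for the diagonal torus, which by Quillen's computation is governed by the $K$-theory of $\mathbb{F}_{q_v}$. The construction of $\tilde{T}^{(i)}$ amounts to lifting a chosen generator of $K_{2i-1}(\mathbb{F}_{q_v}) \otimes \mathbb{Z}_\ell$ to a global cohomology class via a reduction map from the $K$-theory of $\mathcal{O}_F$; because $\mathcal{O}_F^{\times}$ is finite modulo torsion in the imaginary quadratic setting, this reduction is surjective on the relevant components, and the construction goes through.

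Third, I would deduce both parts of the theorem at once. Iterating the $\tilde{T}^{(i)}$ on $1$ and combining with graded commutativity builds every monomial in the $\omega_{2j-1}$ inside the image of $\mathbb{T}'_{\ell, \text{tr}}$ applied to $1$; since the $\mathbb{Z}_\ell$-ranks match the rank of the exterior algebra, $H^{*}(Y(\mathsf{K}_f), \mathbb{Z}_\ell)_{\text{tr}}$ is a free $\mathbb{T}'_{\ell, \text{tr}}$-module of rank one generated by the unit class, giving (i). For (ii), after inverting $\ell$ each $\tilde{T}^{(i)}$ must agree with cup product by $\omega_{2i-1}$: both operators send $1$ to $\omega_{2i-1}$, both commute with the cup product action of Hecke-trivial classes, and the exterior algebra structure forces equality once the behaviour on generators is fixed. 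This identifies $\mathbb{T}'_{\ell, \text{tr}} \otimes \mathbb{Q}_\ell$ with the cup product algebra on $H^{*}(Y(\mathsf{K}_f), \mathbb{Q})_{\text{tr}}$.

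The hard step will be the second one: producing $\tilde{T}^{(i)}$ with the prescribed effect on $1$. It requires a clean passage between three very different objects -- a $K$-theory class in the residue field at $v$, a degree-raising element of the local strict derived Hecke algebra, and a global invariant cohomology class on $Y(\mathsf{K}_f)$. Controlling the denominators introduced by the comparison maps and ensuring that the resulting operator lands on $\omega_{2i-1}$ rather than on some other Hecke-trivial class of the correct degree is the technical core; the imaginary quadratic hypothesis is used in an essential way here, which is precisely why the extension to arbitrary number fields pursued in the present paper demands the author's new reduction result in $K$-theory mentioned in the abstract.
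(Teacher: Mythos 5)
Your outline follows the same overall strategy as the paper (and as Venkatesh's original proof), specialized to the imaginary quadratic case where the stable submodule of Section~\ref{section4} coincides with the full Hecke-trivial cohomology (this is exactly how Corollary~\ref{corollary1.4} deduces Theorem~\ref{theorem1.1} from Theorem~\ref{theorem1.3}). Stages one and two — pinning down $H^{\ast}\big(Y(\mathsf{K}_f),\mathbb{Z}_\ell\big)_{\text{tr}}$ as $\Lambda_{\mathbb{Z}_\ell}(\omega_3,\ldots,\omega_{2d-1})$ via Matsushima and then producing congruence classes realizing each $\omega_{2i-1}$ by detecting a $K$-theory generator at a well-chosen place — are the right skeleton. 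Two remarks on stage two: the direction of the map is reduction $K_{2i-1}(\mathcal{O}_F)\to K_{2i-1}(\mathbb{F}_v)$, so one looks for a global generator whose image at $v$ is nonzero rather than a lift; and the reason the imaginary quadratic case is easy is not that $\mathcal{O}_F^\times$ has rank zero per se, but that Borel's rank formula gives $\operatorname{rank} K_{2j-1}(\mathcal{O}_F)=1$ for every $j\ge 2$, so there is a single generator to detect in each degree, and a single admissible place suffices (this is where Arlettaz--Banaszak already does the job, and where the author's refined Theorem~\ref{theorem5.5} is only needed once the rank exceeds one).

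The genuine gap is in stage three. You argue for freeness in (i) by a rank count and for (ii) by saying the two operators ``both commute with the cup product action of Hecke-trivial classes'' so the exterior algebra structure forces equality. But this presupposes precisely the nontrivial containment $\mathbb{T}'_{\ell,\text{tr}}\subseteq \text{Cup}_{\ell,\text{tr}}$, i.e.\ that every strict derived Hecke operator $h$ acts on $H^\ast_{\text{tr}}$ by $x\mapsto x\cup h(\mathds{1})$. Without it, the surjection $\mathbb{T}'_{\ell,\text{tr}}\to H^\ast_{\text{tr}}$, $h\mapsto h(\mathds{1})$, could have a kernel (so (i) is not established), and the ``forced equality'' in (ii) is unjustified. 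Establishing this containment is the actual technical heart and it requires two ingredients you do not mention: (a) the congruence classes are Hecke super-trivial at the relevant place (Lemma~\ref{lemma3.5}), which in turn needs the index of the congruence cover to be prime to $\ell$, secured by the constraint $q_v\equiv 1\bmod\ell^n$ and $\ell\nmid|W|$ (Proposition~\ref{proposition3.1}); and (b) the projection formula for the transfer of a finite cover (Lemma~\ref{lemma3.6}), which yields $h_{z,\alpha}(x_0\cup x)=x_0\cup h_{z,\alpha}(x)$ whenever $x_0$ is super-trivial (Proposition~\ref{proposition3.8}). Combined with the fact that $h(\mathds{1})$ lies in the algebra of congruence classes (Proposition~\ref{proposition3.8}(i) and Proposition~\ref{proposition1.2}), one gets Lemma~\ref{lemma6.1}, and only then does your rank/iteration argument close both parts of the theorem.
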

	
The key step in the proof of Theorem~\ref{theorem1.1} is to construct suitable derived Hecke operators that act on the cohomology by the cup product with some Hecke trivial class. If $F$ is imaginary quadratic, then for sufficiently large $\ell$ the trivial cohomology $H^{\ast}\big(Y(\mathsf{K}_f), \mathbb{Z}_{\ell}\big)_{\text{tr}}$ is an exterior algebra generated by classes in odd degree where each relevant degree provides exactly one generator. However, there can be many generating classes in a single odd degree for a general number field. Our basic tool to perform this generalization is a careful analysis of the so-called \textit{congruence classes} in the torsion cohomology of the arithmetic manifold. The change of level structure at a place $v$ of $\mathsf{K}_f$ canonically give rise to a homomorphism 
\[H^{\ast}(K_v, \mathds{k}) \to H^{\ast}\big(Y(\mathsf{K}_f), \mathds{k}\big)\]
where $K_v$ is the level at $v$, and $H^{\ast}(K_v, \mathds{k})$ is the continuous cohomology of the profinite group $K_v$ with coefficients in a discretely topologized commutative ring $\mathds{k}$. We refer to the classes in the image of this map as congruence classes at $v$ (Section~\ref{section3.2}). A derived Hecke operator is characterized by a double coset and a suitable congruence class that arises from a covering space attached to the double coset. The congruence classes are Hecke trivial under mild hypotheses and related to the stable cohomology in our setting. There is a standard notion of stability for the general linear group and its classical subgroups that utilizes the natural inclusion $\text{GL}_n \hookrightarrow \text{GL}_{n+1}$ to compare the cohomology of a finite-dimensional group with the cohomology of the limit (Section~\ref{section2}). Nevertheless, one cannot hope to directly apply this procedure for division algebras over a number field. In this article, we utilize the left regular action of the division algebra on itself to construct a variant of the aforementioned stabilization procedure that allows us to define a \textit{stable subspace} in the singular cohomology with appropriate coefficient ring (Section~\ref{section4}). Note that this construction appeals to the algebraic structure associated with the division algebra beyond the anisotropic nature of $\mathsf{G}$. Let $H^{\ast}\big(Y(\mathsf{K}_{f}), \cdot\big)_{\text{st}}$ denote the stable subspace of $Y(\mathsf{K}_{f})$ in the sense described above. It is rather straightforward to pin down the stable classes in the de Rham model by identifying the Hecke trivial summand with the cohomology of the compact dual using Matsushima's formula. In general, for a large prime $\ell$  we show that there is an inclusion $H^{\ast}\big(Y(\mathsf{K}_{f}), \mathbb{Z}_{\ell}\big)_{\text{st}} \subseteq H^{\ast}\big(Y(\mathsf{K}_{f}), \mathbb{Z}_{\ell}\big)_{\text{tr}}$ and also describe the trivial cohomology as a module over the stable subspace. One of the great advantages of our construction of the stable subspace is that it allows us to directly verify that the congruence classes relevant for us indeed lie in the stable submodule, using Quillen's description of the cohomology of the general linear group over a finite field (Section~\ref{section5}). Our key observation in this direction is a comparison between the algebra generated by the congruence classes and the stable subspace, generalizing the main technical lemma of Venkatesh (Lemma~5.3 in \cite{venkatesh}) to division algebras over all number fields. We call a finite place $v$ good for the level $\mathsf{K}_f$ if $\mathsf{G}$ is quasi-split at $v$ and the $v$ component of $\mathsf{K}_f$ is a hyperspecial maximal compact subgroup of $G_v$ (Section~\ref{section1.1}). For a prime $\ell$, let $\text{Cong}_{n}$ is the subalgebra of $H^{\ast}\big(Y(\mathsf{K}_f), \mathbb{Z}/\ell^n\big)$ generated by the congruence classes attached to the places \[\{v \mid \text{$v$ is good and $q_v \equiv 1($mod $\ell^n)$}\}\] where $q_v$ denotes the size of the residue field of $v$.  
	
\begin{proposition}
\label{proposition1.2}
Let $\mathsf{D}$ be a centrally simple division algebra over a number field $F$ and set $\mathsf{G} = \emph{SL}_1(\mathsf{D})$. There exists a finite subset of rational primes $S$ so that for each $\ell \notin S$ and $n \geq 1$ we have $\emph{Cong}_n = H^{\ast}\big(Y(\mathsf{K}_f), \mathbb{Z}/\ell^n\big)_{\emph{st}}$.    
\end{proposition}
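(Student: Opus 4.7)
The plan is to prove the equality $\text{Cong}_n = H^{\ast}(Y(\mathsf{K}_f), \mathbb{Z}/\ell^n)_{\text{st}}$ by establishing the two inclusions separately, after excluding a finite set $S$ of bad primes (containing at a minimum the residue characteristics of the places where $\mathsf{D}$ ramifies, the primes $\ell \leq d$, and those ruled out by the author's $K$-theoretic reduction result alluded to in the abstract). The forward inclusion is a ``universality'' computation, while the reverse one is the substantive direction and the main obstacle.

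For the forward inclusion $\text{Cong}_n \subseteq H^{\ast}_{\text{st}}$, I would fix a good place $v$ with $q_v \equiv 1 \pmod{\ell^n}$ and identify $K_v$ with a conjugate of $\mathrm{SL}_d(\mathcal{O}_v)$. Since $\ell$ is coprime to the residue characteristic $p$, the kernel of the reduction $K_v \twoheadrightarrow \mathrm{SL}_d(k_v)$ is pro-$p$ and has trivial mod-$\ell^n$ cohomology, so inflation produces an isomorphism $H^{\ast}(\mathrm{SL}_d(k_v), \mathbb{Z}/\ell^n) \xrightarrow{\sim} H^{\ast}(K_v, \mathbb{Z}/\ell^n)$. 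Under the congruence $q_v \equiv 1 \pmod{\ell^n}$, Quillen's computation presents $H^{\ast}(\mathrm{SL}_d(\mathbb{F}_{q_v}), \mathbb{Z}/\ell^n)$ as an exterior algebra on universal Chern-like generators that come from $\mathrm{BGL}$. Because the stable subspace of Section~\ref{section4} is defined via the left-regular embedding $\mathsf{D}^{\times} \hookrightarrow \mathrm{GL}_{d^2}(F)$ and pullback of universal classes on $\mathrm{BGL}$, the congruence map $H^{\ast}(K_v, \mathbb{Z}/\ell^n) \to H^{\ast}(Y(\mathsf{K}_f), \mathbb{Z}/\ell^n)$ factors through $H^{\ast}_{\text{st}}$, giving the desired inclusion.

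For the reverse inclusion $H^{\ast}_{\text{st}} \subseteq \text{Cong}_n$, I would reformulate the stable subspace via a Quillen--Borel style comparison, so that it is controlled, in the relevant stable degrees, by $K_{\ast}(\mathcal{O}_F) \otimes \mathbb{Z}/\ell^n$. The author's new result on the reduction map in $K$-theory then supplies the key surjectivity: for a Chebotarev-dense set of good places $v$ with $q_v \equiv 1 \pmod{\ell^n}$, the collective reduction maps $K_{\ast}(\mathcal{O}_F) \otimes \mathbb{Z}/\ell^n \to \bigoplus_v K_{\ast}(k_v) \otimes \mathbb{Z}/\ell^n$ hit every stable generator in the degree range relevant to $\mathsf{G}$. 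Tracing this surjectivity back through the identifications of the forward inclusion expresses every stable class as a combination of congruence classes at these good places.

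The central difficulty is that $\text{Cong}_n$ uses only the places satisfying the $n$-dependent congruence $q_v \equiv 1 \pmod{\ell^n}$, so one must arrange the $K$-theoretic surjectivity \emph{uniformly in $n$} from a single choice of $S$ and must match the abstract $K$-theoretic detection with the explicit classes that arise at finite level from $K_v$. This uniform control is precisely where the improved reduction-map input over arbitrary number fields replaces the ad hoc argument that worked for Venkatesh in the imaginary quadratic case, where each relevant stable degree was generated by a single class and the matching problem trivialised.
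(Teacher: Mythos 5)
Your overall strategy matches the paper's: forward inclusion via the embedding $\iota_r$ and Quillen's description of $H^{\ast}\big(\text{GL}_{\infty}(\mathbb{F}_q),\mathbb{Z}/\ell\big)$, reverse inclusion via the reduction map in $K$-theory and a Chebotarev-type density input, with uniformity in $n$ recovered by working at places with $q_v \equiv 1 \pmod{\ell^n}$. But the forward inclusion as you state it has a genuine gap. You assert that the congruence map $H^{\ast}(K_v,\mathbb{Z}/\ell^n)\to H^{\ast}\big(Y(\mathsf{K}_f),\mathbb{Z}/\ell^n\big)$ ``factors through $H^{\ast}_{\text{st}}$'' essentially because the stable subspace and Quillen's generators both come from $\mathrm{BGL}$. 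This does not follow. What the commuting square (5.1) gives you for free is that the image of the \emph{composite} $\text{cong}_{v,n}\circ \iota_r^{\ast}$ lands in the stable subspace; it says nothing about classes on $\text{SL}_d(\mathbb{F}_v)$ that are not already pulled back from $\text{SL}_{rd^2}(\mathbb{F}_v)$. The indispensable step is the surjectivity of the restriction $\iota_r^{\ast}\colon H^{\ast}\big(\text{SL}_{rd^2}(\mathbb{F}_v),\mathbb{Z}/\ell\big)\to H^{\ast}\big(\text{SL}_{d}(\mathbb{F}_v),\mathbb{Z}/\ell\big)$, which the paper proves (Lemmas~\ref{lemma5.2}, \ref{lemma5.3}) by computing $[E]=rd\,[\mathbb{F}_v^{\oplus d}]$ in the Grothendieck group of $\text{GL}_d(\mathbb{F}_v)$ and then running Quillen's addition formulas for $c_j$ and $e_j$, crucially using that $rd$ is a unit mod $\ell$. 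Without this surjectivity your inclusion $\text{Cong}_n\subseteq H^{\ast}_{\text{st}}$ does not close.

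On the reverse direction your sketch is closer, but two points deserve sharpening to match the paper. First, aggregate surjectivity of the family of reduction maps onto $\bigoplus_v K_{2j-1}(\mathbb{F}_v)\otimes\mathbb{Z}/\ell$ is weaker than what the paper actually uses: for each basis element $a_{2j-1}^{(m)}$ it picks a \emph{single} place $v_{j,m}$ in the set $\mathcal{X}^{\ell,n}_{\mathcal{S}_{j,m},\mathcal{B},\mathcal{J}}$ where $\Pi_{j,v_{j,m}}$ detects exactly that element and kills all the others; this ``singleton detection'' is what makes the pairing computation in Lemma~\ref{lemma5.6} identify $\eta_{j,m}$ with $\iota_r^{\ast}(A_{2j-1}^{(m)})$ up to a unit, degree by degree, rather than relying on a posteriori linear algebra. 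Second, passing from the mod $\ell$ statement to mod $\ell^n$ is not automatic from density alone: the paper invokes the change-of-coefficients isomorphism for $H^{\ast}\big(\text{SL}_N(\mathbb{F}_q),\cdot\big)$ (Lemma~\ref{lemma5.1}, valid precisely because $q_v\equiv 1\pmod{\ell^n}$ and $\ell>N$) to lift each $\eta_{j,m}$ to a mod $\ell^n$ congruence class, and then applies Nakayama to conclude the lifted classes generate $H^{\ast}\big(Y(\mathsf{K}_f),\mathbb{Z}/\ell^n\big)_{\text{st}}$. Your proposal names the uniformity problem but leaves the lifting mechanism unspecified.
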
 
	
The proof of this result appears in Section~\ref{section5} of the article. As mentioned above, it is an easy exercise to check that each class in $\text{Cong}_n$ lies inside the stable submodule. However, to exhibit equality, we need to construct congruence classes that span the whole stable submodule. Our strategy for this step freely borrows from Borel's calculation of the rank of the $K$-group of a ring of integers \cite{gil} and makes use of the properties of the reduction map in $K$-theory:  
\[K_{2j-1}(\mathcal{O}_F) \otimes \mathbb{Z}/\ell \to K_{2j-1}(\mathbb{F}_v) \otimes \mathbb{Z}/\ell; \hspace{.5cm}j \geq 2, \; \mathbb{F}_v =\substack{\text{residue field}\\ \text{at $v$.}}\]
Many authors have already studied the reduction map in the $K$-theory of the ring of integers with a view toward applications in algebraic topology and arithmetic geometry. In a pioneering work on the topic, Arlettaz and Banaszak \cite{crelle1995} showed that given a nontorsion element in an odd $K$-group, there exist infinitely many primes where the image under the reduction map has a large order. One can also use the reduction map to detect linear dependence of the elements and formulate a natural generalization of Erd\"os's support problem for higher $K$-groups; see \cite{comptes2000}, \cite{jnt2005}, \cite{jnt2006}. Recently, the author \cite{reduction} extended the earlier results to describe the reduction pattern of a collection of linearly independent elements, which enables us to construct a spanning set for the stable submodule consisting of congruence classes using elements in $K$-theory. This exercise demonstrates that the stable submodule originates from an integral basis of the $K$-theory, thereby establishing the motivic nature of the stable classes. In general, the trivial cohomology may contain certain unstable $\varepsilon$-classes if $d$ is even that vanish during the stabilization procedure. We, therefore, formulate the structure theorem regarding the derived Hecke action using the stable subspace instead of the full Hecke trivial cohomology. Proofs of all the results described below appear in Section~\ref{section6} of the paper. 
	
\begin{theorem}
\label{theorem1.3}
Let $\mathsf{D}$ be a centrally simple division algebra over a number field $F$ and set $\mathsf{G} = \emph{SL}_1(\mathsf{D})$. There exists a finite subset of rational primes $S$ so that for each $\ell \notin S$, the following holds: 
\begin{enumerate}[label =(\roman*), align=left, leftmargin=0pt]
\item The action of the strict derived Hecke algebra $\mathbb{T}'_{\ell}$ preserves the stable cohomology $H^{\ast}\big(Y(\mathsf{K}_f), \mathbb{Z}_{\ell}\big)_{\emph{st}}$. Let $\mathbb{T}'_{\ell, \emph{st}}$ denote the image of $\mathbb{T}'_{\ell}$ in the endomorphism ring of $H^{\ast}\big(Y(\mathsf{K}_f), \mathbb{Z}_{\ell}\big)_{\emph{st}}$. Then $H^{\ast}\big(Y(\mathsf{K}_f), \mathbb{Z}_{\ell}\big)_{\emph{st}}$ is a free $\mathbb{T}'_{\ell, \emph{st}}$-module generated by the multiplicative unit in degree zero. 
\item The $\mathbb{Q}_{\ell}$-algebra of endomorphisms $\mathbb{T}_{\ell, \emph{st}}' \otimes \mathbb{Q}_{\ell}$ coincides with the $\mathbb{Q}_{\ell}$-algebra generated by $H^{\ast}\big(Y(\mathsf{K}_f), \mathbb{Q}\big)_{\emph{st}}$ acting on itself by cup product.    
\end{enumerate}
\end{theorem}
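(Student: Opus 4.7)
The plan is to transcribe Venkatesh's proof of Theorem~\ref{theorem1.1} into the present setting, using Proposition~\ref{proposition1.2} as the key substitute for Venkatesh's Lemma~5.3. The argument naturally breaks into three stages: preservation of the stable submodule under $\mathbb{T}'_\ell$, realization of cup-product-by-stable-class operators inside $\mathbb{T}'_\ell$, and identification of the resulting module and algebra structure.

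For the first stage, I would analyze a strict derived Hecke operator attached to a double coset at a good prime $v$ with $q_v \equiv 1 \pmod{\ell^n}$. The Hecke trivial subspace is preserved almost tautologically, and on Hecke trivial classes the derived operator is computed (as in the imaginary quadratic case of \cite{venkatesh}) to act by cup product with the congruence class at $v$ pulled back from $H^{\ast}(K_v, \mathbb{Z}/\ell^n)$. Proposition~\ref{proposition1.2} places this congruence class inside the mod $\ell^n$ stable submodule, and since the stable subspace is a graded subalgebra closed under cup product (by its construction in Section~\ref{section4}), cup product by such a class preserves $H^{\ast}\bigl(Y(\mathsf{K}_f), \mathbb{Z}/\ell^n\bigr)_{\text{st}}$. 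Taking the inverse limit in $n$ gives preservation of $H^{\ast}\bigl(Y(\mathsf{K}_f), \mathbb{Z}_\ell\bigr)_{\text{st}}$ by $\mathbb{T}'_\ell$.

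For the second stage, fix a stable class $\omega$ modulo $\ell^n$. By Proposition~\ref{proposition1.2}, $\omega$ is a polynomial in congruence classes at good primes with $q_v \equiv 1 \pmod{\ell^n}$, so by Stage~1 there is an element of $\mathbb{T}'_\ell$ that acts as cup product by $\omega$ on the stable submodule modulo $\ell^n$. Combining this with the $\ell$-adic completeness of $\mathbb{T}'_{\ell, \text{st}}$ as a subring of the endomorphism ring of $H^{\ast}\bigl(Y(\mathsf{K}_f), \mathbb{Z}_\ell\bigr)_{\text{st}}$, one obtains for each $\ell$-adic stable class $\omega$ an element of $\mathbb{T}'_{\ell, \text{st}}$ acting by $\omega \cup (-)$. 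Stage~3 is then essentially formal: by Matsushima's formula and the description in Section~\ref{section4}, the stable cohomology is an exterior algebra on odd-degree generators over $\mathbb{Z}_\ell$ for $\ell \notin S$, hence a free module of rank one over itself under cup product with generator the unit class in degree zero. Combined with Stage~2 this gives~(i). For~(ii), the inclusion ``cup product algebra $\subseteq \mathbb{T}'_{\ell, \text{st}} \otimes \mathbb{Q}_\ell$'' rationalizes Stage~2, while Stage~1 supplies the reverse inclusion.

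The main obstacle lies in Stage~2, specifically in passing from the mod $\ell^n$ generation supplied by Proposition~\ref{proposition1.2} to a single element of $\mathbb{T}'_\ell$ inducing cup product by a given $\ell$-adic stable class. The primes $v$ underlying the congruence-class polynomial expressions depend on $n$ (since the condition $q_v \equiv 1 \pmod{\ell^n}$ tightens as $n$ grows), so a naive inverse limit does not suffice. The author's $K$-theoretic reduction results \cite{reduction} furnish coherent choices of such $v$ across all $n$ and control the $\ell$-adic orders of the resulting congruence classes; once this coherence is in place, the inverse-limit arguments of Stages~1 and~3 go through without further difficulty.
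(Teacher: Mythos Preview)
Your three-stage outline matches the paper's proof (Lemma~6.1 followed by the argument in Section~\ref{section6.1}) almost exactly, and your use of Proposition~\ref{proposition1.2} at both stages is correct. One refinement in Stage~1: the assertion that a derived operator acts on Hecke-trivial classes by cup product with a congruence class is not quite right as stated; the correct hypothesis is \emph{super-triviality} (Proposition~\ref{proposition3.8}(ii)). The paper closes this gap by observing that stable classes, being congruence classes by Proposition~\ref{proposition1.2}, are automatically super-trivial at the relevant primes via Lemma~\ref{lemma3.5}. You should make this step explicit.

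The ``main obstacle'' you identify in Stage~2, however, is spurious, and your proposed remedy misplaces the role of the $K$-theoretic input. No coherent choice of primes across varying $n$ is needed. Recall that $\mathbb{T}'_{\ell}$ is by definition the set of compatible systems $(t_n)_{n \geq 1}$ of \emph{endomorphisms} with $t_n \in \mathbb{T}'_{\ell,n}$; the compatibility is imposed on the endomorphisms, not on the underlying Hecke data. Given $\omega \in H^{\ast}\bigl(Y(\mathsf{K}_f), \mathbb{Z}_\ell\bigr)_{\text{st}}$ with mod $\ell^n$ reduction $\omega_n$, the operator $\text{cup}_{\omega_n}$ on the \emph{full} cohomology lies in $\mathbb{T}'_{\ell,n}$ (as a product of identity-double-coset derived operators, which act by cup product on everything), and the system $(\text{cup}_{\omega_n})_n$ is trivially compatible since each is the reduction of the single $\mathbb{Z}_\ell$-endomorphism $\text{cup}_\omega$. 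Hence $\text{cup}_\omega \in \mathbb{T}'_\ell$ directly, with no appeal to \cite{reduction} beyond what already went into Proposition~\ref{proposition1.2}. The $K$-theoretic reduction results are used entirely in the proof of Proposition~\ref{proposition1.2} itself (to produce enough congruence classes), not in the passage to the $\ell$-adic limit here.
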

	
The above formulation of the structure theorem essentially provides an Eisenstein version of the motivic action theory for cusp forms with the motivic cohomology groups replaced by the higher $K$-groups. If $d$ is odd or $F$ is totally imaginary, then the stable submodule equals the full Hecke trivial submodule. Thus, one immediately obtains an assertion about the Hecke trivial summand in these cases: 
	
\begin{corollary}
\label{corollary1.4}
Let $\mathsf{D}$ be a centrally simple division algebra over a number field $F$ so that either $d$ is odd or $F$ is totally imaginary. Then Theorem~\ref{theorem1.1} holds for the group $\mathsf{G} = \emph{SL}_1(\mathsf{D})$.  
\end{corollary}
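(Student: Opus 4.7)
The plan is to deduce the corollary directly from Theorem~\ref{theorem1.3} by showing that, under either of the two hypotheses, the stable submodule coincides with the full Hecke trivial submodule; once this equality is in hand the two clauses of Theorem~\ref{theorem1.1} follow from the corresponding clauses of Theorem~\ref{theorem1.3}. I would first reduce to a comparison over $\mathbb{C}$: by Matsushima's formula (as recalled in Section~\ref{section3.4}), $H^{\ast}\big(Y(\mathsf{K}_f), \mathbb{C}\big)_{\text{tr}}$ is canonically isomorphic to the singular cohomology of the compact dual symmetric space $\widehat{X}$ attached to $\mathsf{G}$; decomposing $\widehat{X} = \prod_{v \mid \infty} \widehat{X}_v$ and applying the Künneth formula, it suffices to understand each local factor $\widehat{X}_v$.

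Next, I would exploit the hypotheses to eliminate the unstable $\varepsilon$-classes flagged between Proposition~\ref{proposition1.2} and Theorem~\ref{theorem1.3}. A central division algebra over $\mathbb{R}$ has reduced degree $1$ or $2$, so if $d$ is odd then $\mathsf{D}$ is automatically split at every real place of $F$; if instead $F$ is totally imaginary, there are no real places at all. In either case every local factor $\widehat{X}_v$ arises from a split central simple $\mathbb{R}$-algebra and is either the symmetric space $\text{SU}(d)/\text{SO}(d)$ at real places or the compact form of $\text{SL}_d$ at complex places. These are precisely the compact duals detected by the stabilization procedure of Section~\ref{section4}, which is built from the left regular action of $\mathsf{D}$ on itself; the $\varepsilon$-classes that obstruct stability originate in the exceptional cohomology of the compact duals attached to ramified real places, which by hypothesis do not occur. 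This yields $H^{\ast}\big(Y(\mathsf{K}_f), \mathbb{C}\big)_{\text{st}} = H^{\ast}\big(Y(\mathsf{K}_f), \mathbb{C}\big)_{\text{tr}}$.

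To promote the equality to $\mathbb{Z}_\ell$-coefficients, I would enlarge $S$ to include the primes dividing the torsion in the integral cohomology of the factors $\widehat{X}_v$ as well as any residual denominators in the integral form of the stabilization map from Section~\ref{section4}. Once $\ell \notin S$, both submodules sit as saturated $\mathbb{Z}_\ell$-lattices inside their common $\mathbb{Q}_\ell$-span, and the rational equality forces $H^{\ast}\big(Y(\mathsf{K}_f), \mathbb{Z}_\ell\big)_{\text{st}} = H^{\ast}\big(Y(\mathsf{K}_f), \mathbb{Z}_\ell\big)_{\text{tr}}$. Substituting into Theorem~\ref{theorem1.3} then yields Theorem~\ref{theorem1.1}.

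The principal obstacle will be the integral step: it is easy to see that the equality holds rationally, but certifying it over $\mathbb{Z}_\ell$ with a bounded, explicit exceptional set $S$ depending only on $\mathsf{D}$ requires a careful audit of the torsion contributed by the integral cohomology of the compact duals and by the integral stabilization comparison. Since the relevant torsion is bounded in terms of $d$ alone, it can be absorbed into $S$ without difficulty, but the bookkeeping is the nontrivial content of the argument.
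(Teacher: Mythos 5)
Your overall strategy — show that under either hypothesis the stable submodule coincides with the full Hecke-trivial submodule, then import Theorem~\ref{theorem1.3} — is exactly the paper's route. However, your justification for the disappearance of the $\varepsilon$-classes contains a genuine misidentification. You write that ``the $\varepsilon$-classes that obstruct stability originate in the exceptional cohomology of the compact duals attached to ramified real places.'' This is backwards: the compact dual at a ramified real place is $\mathrm{SU}_d/\mathrm{Sp}_{d/2}$, whose rational cohomology (Appendix~\ref{appendixA.2}) is a free exterior algebra on odd $\alpha$-classes and contributes \emph{no} $\varepsilon$-class. The $\varepsilon$-class $\varepsilon_d^\sigma$ lives in $H^d(\mathrm{SU}_d/\mathrm{SO}_d)$, i.e.\ at \emph{split} real places, and it exists only when $d$ is \emph{even}, because it is the pullback of the Euler class $\chi \in H^d(B\mathrm{SO}_d,\mathbb{Z})$, which is rationally zero for $d$ odd. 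So in the case $d$ odd your inference ``$\mathsf{D}$ splits at every real place $\Rightarrow$ no ramified real places $\Rightarrow$ no $\varepsilon$-classes'' reaches the right conclusion for the wrong reason: $\mathcal{M}_{F,d}^{\mathrm{ur}}=\emptyset$ simply because $d$ is odd, not because ramified places are absent. In the totally imaginary case there are no real places at all, which does suffice. You should repair the justification; as stated it would fail the test of someone checking it against Appendix~\ref{appendixA.2}.

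Beyond this, the detour through $\mathbb{C}$-coefficients and a lattice-saturation argument is unnecessary: the paper has already established the integral decomposition
\[
H^{\ast}\big(Y(\mathsf{K}_f), \mathbb{Z}_{\ell}\big)_{\mathrm{tr}} \;=\; \bigoplus_{\mathcal{J} \subseteq \mathcal{M}^{\mathrm{ur}}_{F,d}} \varepsilon_{\mathcal{J}}\, H^{\ast}\big(Y(\mathsf{K}_f), \mathbb{Z}_{\ell}\big)_{\mathrm{st}}
\]
for $\ell \notin S_1$ (equation~\eqref{4.3}). Under either hypothesis $\mathcal{M}^{\mathrm{ur}}_{F,d}=\emptyset$ by definition, so only the $\mathcal{J}=\emptyset$ summand survives and the equality $H^{\ast}(\cdot,\mathbb{Z}_\ell)_{\mathrm{tr}} = H^{\ast}(\cdot,\mathbb{Z}_\ell)_{\mathrm{st}}$ is immediate, with no additional enlargement of $S$ required. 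Citing \eqref{4.3} directly makes the ``integral bookkeeping'' you flag as the principal obstacle disappear.
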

	
A well-known principle in the arithmetic theory of automorphic forms suggests that each Hecke eigenclass in the cohomology should correspond to some motivic phenomena. We also provide a statement regarding the full Hecke trivial submodule in the general case. The main difference between Theorem~\ref{theorem1.3} and the general case is that in the latter situation, the classes in the minimal degree do not always generate the full eigenspace as a module over the derived Hecke algebra. To describe the derived Hecke module structure of the trivial submodule in a precise manner, we introduce some notation. Set 
\begin{equation*}
\mathcal{M}_{F,d}^{\text{ur}} = \begin{cases}
			\emptyset, & \text{if $d$ is odd;}\\
			\big\{\substack{\text{real places $\sigma$ of $F$}\\ \text{so that $\mathsf{D}$ splits at $\sigma$}}\big\}, & \text{if $d$ is even.}
		\end{cases} 
\end{equation*}   
The description of Hecke trivial cohomology using de Rham model implies that for each $\sigma \in \mathcal{M}_{F, d}^{\text{ur}}$ there exists a class $\varepsilon^{\sigma}_d\in H^{d}\big(Y(\mathsf{K}_{f}), \mathbb{Q}\big)_{\text{tr}}$ arising from the Euler class in the cohomology of the classifying space of $\text{SO}_d$; see Section~\ref{section3.4}. Roughly, these classes capture an extra sign constraint, appearing at each real place that splits an even degree division algebra. 
	
\begin{theorem}
\label{theorem1.5}
Let $\mathsf{D}$ be a centrally simple division algebra over a number field $F$ and set $\mathsf{G} = \emph{SL}_1(\mathsf{D})$. There exists a finite subset of primes $S$ so that for each $\ell \notin S$: 
\begin{enumerate}[label=(\roman*), align=left, leftmargin=0pt]
\item The trivial cohomology $H^{\ast}\big(Y(\mathsf{K}_f), \mathbb{Z}_{\ell}\big)_{\emph{tr}}$ is stable under the action of $\mathbb{T}'_{\ell}$. Let $\mathbb{T}'_{\ell, \emph{tr}}$ be the image of $\mathbb{T}'_{\ell}$ in the endomorphism ring of $H^{\ast}\big(Y(\mathsf{K}_f), \mathbb{Z}_{\ell}\big)_{\emph{tr}}$. Then, $H^{\ast}\big(Y(\mathsf{K}_f), \mathbb{Z}_{\ell}\big)_{\emph{tr}}$ is a free module over $\mathbb{T}'_{\ell, \emph{tr}}$ with basis \[\big\{\prod_{\sigma \in \mathcal{J}}\varepsilon_{d}^{\sigma} \mid \mathcal{J} \subseteq \mathcal{M}_{F,d}^{\emph{un}}\big\}.\] 
\item The $\mathbb{Q}_{\ell}$-algebra of endomorphisms $\mathbb{T}_{\ell, \emph{tr}}' \otimes \mathbb{Q}_{\ell}$ coincides with the $\mathbb{Q}_{\ell}$-algebra generated by $H^{\ast}\big(Y(\mathsf{K}_f), \mathbb{Q}\big)_{\emph{st}}$ acting on $H^{\ast}\big(Y(\mathsf{K}_f), \mathbb{Q}\big)_{\emph{tr}}$ by cup product.    
\end{enumerate} 
\end{theorem}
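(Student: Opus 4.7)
The plan is to deduce Theorem~\ref{theorem1.5} from Theorem~\ref{theorem1.3} by exhibiting the full Hecke-trivial cohomology as a graded extension of the stable submodule by the exterior algebra on the Euler classes $\varepsilon_d^\sigma$, and verifying that the derived Hecke action sees only the stable tensor factor.

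The first and technically decisive step is to establish, after possibly enlarging the bad set $S$, a multiplicative $H^{\ast}_{\emph{st}}$-linear isomorphism
\begin{equation*}
H^{\ast}\bigl(Y(\mathsf{K}_f), \mathbb{Z}_\ell\bigr)_{\emph{tr}} \;\cong\; H^{\ast}\bigl(Y(\mathsf{K}_f), \mathbb{Z}_\ell\bigr)_{\emph{st}} \otimes_{\mathbb{Z}_\ell} \Lambda^{\ast}_{\mathbb{Z}_\ell}\bigl(\varepsilon_d^\sigma : \sigma \in \mathcal{M}_{F,d}^{\emph{ur}}\bigr).
\end{equation*}
Rationally this reflects the de Rham/Matsushima description of $H^{\ast}_{\emph{tr}}$ (Section~\ref{section3.4}): the compact dual decomposes as a product over the archimedean places of $F$, the factors at split real places in $\mathcal{M}_{F,d}^{\emph{ur}}$ each contribute an Euler class in degree $d$, and the remaining factors carve out exactly the stable cohomology. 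To descend the decomposition to $\mathbb{Z}_\ell$, one chooses $\mathbb{Z}_\ell$-primitive lifts of the $\varepsilon_d^\sigma$ and absorbs into $S$ the finitely many primes where this lift is obstructed or where mod-$\ell$ torsion forces cup-product relations absent over $\mathbb{Q}$; this is where the bulk of the technical work lies.

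Given the decomposition, the remainder is a streamlined reduction to Theorem~\ref{theorem1.3}. Invariance of $H^{\ast}_{\emph{tr}}$ under $\mathbb{T}'_\ell$ is immediate because derived Hecke operators commute with classical Hecke operators and hence preserve the generalized eigenspace for the trivial character. On the Hecke-trivial cohomology each $\tau \in \mathbb{T}'_\ell$ acts as cup product with $\tau \cdot 1 \in H^{\ast}_{\emph{st}}$, which is the cup-product mechanism behind Venkatesh's derived Hecke construction coupled with Proposition~\ref{proposition1.2} placing every relevant congruence class inside the stable submodule. Cup product with a stable class respects the tensor decomposition above, acting as multiplication on the first factor and as the identity on the exterior factor, so the $\mathbb{T}'_\ell$-action on $H^{\ast}_{\emph{tr}}$ is pulled back from its action on $H^{\ast}_{\emph{st}}$. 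Theorem~\ref{theorem1.3}(i) then identifies $\mathbb{T}'_{\ell, \emph{tr}}$ with $\mathbb{T}'_{\ell, \emph{st}}$ and displays $H^{\ast}_{\emph{tr}}$ as a free $\mathbb{T}'_{\ell, \emph{tr}}$-module with basis $\bigl\{\prod_{\sigma \in \mathcal{J}} \varepsilon_d^\sigma : \mathcal{J} \subseteq \mathcal{M}_{F,d}^{\emph{ur}}\bigr\}$, giving (i); inverting $\ell$ and invoking Theorem~\ref{theorem1.3}(ii) then recognizes $\mathbb{T}'_{\ell, \emph{tr}} \otimes \mathbb{Q}_\ell$ as the $\mathbb{Q}_\ell$-algebra generated by $H^{\ast}(Y(\mathsf{K}_f), \mathbb{Q})_{\emph{st}}$ acting on $H^{\ast}(Y(\mathsf{K}_f), \mathbb{Q})_{\emph{tr}}$ by cup product, yielding (ii).
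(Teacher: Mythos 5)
Your reduction to Theorem~\ref{theorem1.3} via the decomposition $H^{\ast}_{\mathrm{tr}} \cong \bigoplus_{\mathcal{J}} \varepsilon_{\mathcal{J}}\, H^{\ast}_{\mathrm{st}}$ is the right frame, but you assert the crucial step without justification, and that is exactly where the paper has to do real work. You claim that on $H^{\ast}_{\mathrm{tr}}$ every $\tau \in \mathbb{T}'_\ell$ acts as cup product with $\tau\cdot\mathds{1} \in H^{\ast}_{\mathrm{st}}$, citing ``the cup-product mechanism behind Venkatesh's derived Hecke construction'' plus Proposition~\ref{proposition1.2}. But the projection-formula identity $h_{z,\alpha}(x_0 \cup x) = x_0 \cup h_{z,\alpha}(x)$ in Proposition~\ref{proposition3.8}(ii) is only valid when $x_0$ is super-trivial at the relevant double coset; Proposition~\ref{proposition1.2} together with Lemma~\ref{lemma3.5} supplies super-triviality for \emph{stable} classes, because those equal congruence classes, but it says nothing about the unstable Euler classes $\varepsilon_d^\sigma$. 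Since the $\varepsilon$-classes are precisely the classes \emph{not} in $\mathrm{Cong}_n = H^{\ast}_{\mathrm{st}}$, nothing in your argument shows that $\tau(\varepsilon_{\mathcal{J}}) = \varepsilon_{\mathcal{J}} \cup \tau(\mathds{1})$, i.e.\ that $\mathbb{T}'_{\ell,\mathrm{tr}} \subseteq \mathrm{Cup}_{\ell,\mathrm{tr}}$.

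The paper closes this gap by a separate geometric argument: working in the classical (non-adelic) picture, it shows the two covering maps $\pi_{g_v}, \widetilde{\pi}_{g_v}$ pull back the principal $\mathsf{K}$-bundle over $Y(\mathsf{K}_f)$ compatibly, so the classifying map to $B\mathsf{K}$ (and hence the pullback of the Euler class in $H^{\ast}(B\mathsf{K})$) is invariant under both. This forces $\widetilde{\pi}_{g_v}^{\ast}(\varepsilon_d^\sigma) = \pi_{g_v}^{\ast}(\varepsilon_d^\sigma)$, i.e.\ the $\varepsilon$-classes are Hecke super-trivial, after which the rest of your reduction goes through. Also note that the $\mathbb{Z}_\ell$- and $\mathbb{Z}/\ell^n$-decompositions you treat as the technically decisive point are not where the difficulty lies; they are already established in Section~\ref{section4.2} (equations \eqref{4.3}, \eqref{4.4}) as part of the setup for all three structure theorems, so there is no extra lattice-lifting work to do beyond absorbing the primes in $S_1$.
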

	
The article is complemented by an appendix that revisits Borel's theorem on the real stable cohomology \cite{borel} of arithmetic groups with a view toward its relevant applications.

\subsection{Notation and conventions}
\label{section1.1}
In this article, $F$ denotes a number field, and $\mathcal{O}_{F}$ is its ring of integers. Let $\mathcal{M}_{F}^{\infty}$, resp. $\mathcal{M}_{F}^{\text{fin}}$, denote the collection of the infinite places, resp. finite places of $F$. Moreover $\mathcal{M}_{F}^{\text{re}}$, resp. $\mathcal{M}_{F}^{\text{im}}$, is subset of all real, resp. complex, embeddings of $F$. If $v$ is a valuation of $F$, then $F_v$ is the completion of $v$ at $F$. Suppose that $v$ is a finite place. Then $\mathcal{O}_{v}$ is the ring of integers of $F_v$, the residue field at $v$ is $\mathbb{F}_v$, and the notation $q_v$ refers to the size of $\mathbb{F}_v$. 
	
Let $\mathsf{G}$ be a connected, reductive algebraic group over a number field $F$. Now suppose $\mathbb{A}_{F}$, resp. $\mathbb{A}_{F, f}$, is the ring of adeles, resp. finite adeles, attached to $F$. For a place $v$ of $F$ we set $G_v := \mathsf{G}(F_v)$. The archimedean component of $\mathsf{G}(\mathbb{A}_F)$ is $\mathsf{G}_{\text{re}} := \mathsf{G}(F \otimes_{\mathbb{Q}} \mathbb{R}) = \prod_{v \in \mathcal{M}_{F}^{\infty}} G_v$. Let  $\mathsf{K}$ be a maximal connected compact subgroup of the real Lie group $\mathsf{G}_{\text{re}}$. Then the (disconnected) symmetric space attached to $\mathsf{G}$ equals $\mathsf{X} := \mathsf{G}_{\text{re}}/\mathsf{K}$. With each choice of an open compact subgroup $\mathsf{K}_f \subseteq \mathsf{G}(\mathbb{A}_{F,f})$ one attaches an arithmetic quotient $Y(\mathsf{K}_f)$ defined as follows: 
\[Y(\mathsf{K}_f) := \mathsf{G}(F) \backslash \mathsf{X} \times \mathsf{G}(\mathbb{A}_{F,f})/\mathsf{K}_f.\]
In general, the arithmetic quotient $Y(\mathsf{K}_f)$ is a disjoint union of locally symmetric spaces and  possesses an orbifold structure. For convenience, we always assume that our compact open subgroup $\mathsf{K}_{f}$ is \textit{neat}. In more detail, if $\{g_{f,j} \mid 1 \leq j \leq m\}$ is a set of representatives for the double cosets $\mathsf{G}(F) \backslash \mathsf{G}(\mathbb{A}_{F,f}) / \mathsf{K}_f$ then each of the arithmetic subgroups $\mathsf{G}(F) \cap g_{f,j} \mathsf{K}_f g_{f,j}^{-1}$ is torsion-free. Moreover one works only with open compact subgroups with a product structure $\mathsf{K}_f = \prod_{v \in \mathcal{M}_{F}^{\text{fin}}} K_{v}$ where $K_v \subseteq G_v$ is an open compact subgroup and $K_v$ is a hyperspecial maximal compact of $G_v$ for all but finitely many $v$. A prime $v$ is \textit{good} for the level structure $\mathsf{K}_f$ if $\mathsf{G}$ is quasi-split at $v$ and $K_v$ is a hyperspecial maximal compact subgroup of $G_v$. This article concerns the study of the Hecke action on the cohomology of the locally symmetric space $Y(\mathsf{K}_{f})$. We compute the homology and cohomology of $Y(\mathsf{K}_{f})$ using the singular homology theory with the coefficients in a commutative ring $\mathds{k}$ unless otherwise stated. One calls a finite place $v$ an \textit{admissible} place for the pair $(\mathsf{K}_{f}, \mathds{k})$ if $v$ is a good place for $\mathsf{K}_{f}$ and the residue characteristic at $v$ is a unit in $\mathds{k}$.

\section{Stable homology and cohomology} 
\label{section2}
This section aims to review the necessary background from the theory of stable homology and cohomology that plays a role in the later calculations. 
	
\subsection{Spaces of primitive and indecomposable elements}
\label{section2.1}
A topological space is \textit{nice} if it is path-connected and its integral homology groups are finitely generated. All the topological spaces appearing in the discussion below are nice.   
	
Let $(X, x_0)$ be a pointed space and $\mathds{k}$ be a field. The diagonal map $\Delta_{X}: X \to X \times X$ turns $H_{*}(X,\mathds{k})$ into a unital $\mathds{k}$-coalgebra. Moreover $H^{*}(X,\mathds{k})$ possesses a natural augmented $\mathds{k}$-algebra structure stemming from the cup product. Let $P_{*}(X,\mathds{k})$, resp. $I^{*}(X,\mathds{k})$, denote the space of primitive elements, resp. indecomposable elements, attached to the homology, resp. cohomology, of $X$ \cite[20.2]{may-ponto}. The natural pairing between homology and cohomology induces a duality pairing between $P_{*}(X,\mathds{k})$ and $I^{*}(X,\mathds{k})$. Recall the Hurewicz homomorphism  $\text{Hur}_{j}: \pi_{j}(X,x_0) \to H_{j}(X, \mathbb{Z})$. One constructs a $\mathds{k}$-linear Hurewicz map using the universal coefficient theorem for homology: 
\[\text{Hur}_{j,\mathds{k}}: \pi_{j}(X,x_0)\otimes_{\mathbb{Z}} \mathds{k} \to H_{j}(X, \mathds{k}). \hspace{.2cm}(\substack{j \geq 1})\] 
	
\begin{lemma}
\label{lemma2.1}
$\emph{Im}(\emph{Hur}_{j,\mathds{k}}) \subseteq P_{j}(X,\mathds{k})$. 
\end{lemma}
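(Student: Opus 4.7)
The plan is to prove primitivity of Hurewicz classes by reducing to the universal case $X = S^j$ via functoriality of the diagonal, and then checking primitivity of the fundamental class of the sphere by hand. Represent an element of $\pi_j(X, x_0)$ by a based map $\alpha: S^j \to X$; then its Hurewicz image in $H_j(X, \mathds{k})$ is $\alpha_{\ast}([S^j]_{\mathds{k}})$, where $[S^j]_{\mathds{k}}$ denotes the image of the fundamental class under $H_j(S^j, \mathbb{Z}) \to H_j(S^j, \mathds{k})$. Since $\alpha$ is based, the diagonal on $X$ satisfies the factorization
\[
\Delta_X \circ \alpha = (\alpha \times \alpha) \circ \Delta_{S^j},
\]
so it suffices to show that $\Delta_{S^j, \ast}([S^j]_{\mathds{k}}) = [S^j]_{\mathds{k}} \otimes 1 + 1 \otimes [S^j]_{\mathds{k}}$ under the Künneth isomorphism $H_{\ast}(S^j \times S^j, \mathds{k}) \cong H_{\ast}(S^j, \mathds{k})^{\otimes 2}$ (which holds since $\mathds{k}$ is a field and $S^j$ has free, finitely generated homology).

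For $j \geq 1$, the degree-$j$ part of $H_{\ast}(S^j, \mathds{k})^{\otimes 2}$ is spanned by $[S^j]_{\mathds{k}} \otimes 1$ and $1 \otimes [S^j]_{\mathds{k}}$, so the diagonal image has the form $a \cdot [S^j]_{\mathds{k}} \otimes 1 + b \cdot 1 \otimes [S^j]_{\mathds{k}}$. Post-composing $\Delta_{S^j}$ with the two projections yields the identity on $S^j$, and the projections $p_{i, \ast}$ send each summand of $[S^j]_{\mathds{k}}^{\otimes 2}$-basis to either $[S^j]_{\mathds{k}}$ or $0$ in the obvious way, forcing $a = b = 1$. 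Applying $(\alpha \times \alpha)_{\ast}$, which under Künneth equals $\alpha_{\ast} \otimes \alpha_{\ast}$, and using that $\alpha$ is based so $\alpha_{\ast}(1) = 1$, gives
\[
\Delta_{X, \ast}\bigl(\alpha_{\ast}([S^j]_{\mathds{k}})\bigr) = \alpha_{\ast}([S^j]_{\mathds{k}}) \otimes 1 + 1 \otimes \alpha_{\ast}([S^j]_{\mathds{k}}),
\]
which is exactly the primitivity condition defining $P_j(X, \mathds{k})$. Since the map $\text{Hur}_{j, \mathds{k}}$ is $\mathds{k}$-linear and its image on $\pi_j(X, x_0) \otimes 1$ has been shown to land in the primitives, $\mathds{k}$-linearity of $\Delta_{X, \ast}$ finishes the proof. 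There is no serious obstacle here; the only point requiring mild care is the invocation of Künneth, which is automatic because $\mathds{k}$ is a field, and the identification of $1 \in H_0$ with the class of the basepoint, which is where the based hypothesis on $\alpha$ enters.
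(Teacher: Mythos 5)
Your proof is correct and follows essentially the same route the paper gestures at: the paper's one-line proof appeals to the vanishing of $H^{i}(S^{j},\mathds{k})$ for $0<i<j$, which is precisely (via Künneth over a field) the reason the degree-$j$ part of $H_{\ast}(S^{j}\times S^{j},\mathds{k})$ has no cross terms — the step where you deduce that $\Delta_{S^{j},\ast}[S^{j}]_{\mathds{k}}$ is a linear combination of $[S^{j}]_{\mathds{k}}\otimes 1$ and $1\otimes [S^{j}]_{\mathds{k}}$. You have simply written out in full the standard argument the paper cites as ``identical to the integral version.''
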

	
\begin{proof}
The proof is identical to the integral version and relies on the result that $H^{i}(S^j, \mathds{k}) = \{0\}$ where $S^j$ is the $j$-sphere and $0 < i < j$.  
\end{proof}
	
Now suppose $X$ is an associative $H$-space. Then the canonical multiplication and comultiplication maps attached to cohomology turn $H^{*}(X,\mathds{k})$ into a connected, commutative Hopf algebra of finite type. Moreover, the celebrated Cartan-Serre theorem in rational homotopy theory asserts that \[\text{Hur}_{j, \mathbb{Q}}: \pi_{j}(X,x_0) \otimes \mathbb{Q} \to P_{j}(X,\mathbb{Q}) \hspace{.5cm}(\substack{j \geq 1})\] 
is an isomorphism \cite[9.2]{may-ponto}. We next provide a mod $\ell$ version of this result. 
	
\begin{lemma}
\label{lemma2.2}
Let $(X, x_0)$ be an associative $H$-space and $j \geq 1$. There exists a positive integer $C(X,j)$ divisible by the size of $\pi_{j}(X,x_0)_{\emph{tor}}$ so that for each prime $\ell \nmid C(X,j)$ the mod $\ell$ Hurewicz map 
\[\emph{Hur}_{j, \mathbb{Z}/\ell}: \pi_{j}(X, x_0) \otimes \mathbb{Z}/\ell \to P_{j}(X, \mathbb{Z}/\ell)\]
is an isomorphism. 
\end{lemma}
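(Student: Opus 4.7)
The plan is to factor the Hurewicz map through an integral notion of primitives, invoke the rational Cartan--Serre theorem from the preceding discussion, and then transfer the resulting isomorphism to $\mathbb{Z}/\ell$ coefficients via a universal coefficient argument, absorbing all of the finite obstructions into the constant $C(X,j)$.

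First, since $X$ is an associative $H$-space with finitely generated integral homology, one defines an integral primitive submodule $P_{j}(X, \mathbb{Z}) \subseteq H_{j}(X, \mathbb{Z})$ as the kernel of the reduced coproduct $\bar{\Delta}_{\mathbb{Z}}$, computed via the K\"unneth embedding $H_{\ast}(X, \mathbb{Z}) \otimes H_{\ast}(X, \mathbb{Z}) \hookrightarrow H_{\ast}(X \times X, \mathbb{Z})$. The argument of Lemma~\ref{lemma2.1} (using that the reduced coproduct is trivial on spheres) shows that the integral Hurewicz map factors as $\pi_{j}(X, x_{0}) \to P_{j}(X, \mathbb{Z}) \hookrightarrow H_{j}(X, \mathbb{Z})$. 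Flatness of $\mathbb{Q}$ identifies $P_{j}(X, \mathbb{Z}) \otimes \mathbb{Q}$ with $P_{j}(X, \mathbb{Q})$, so the Cartan--Serre theorem implies that $\pi_{j}(X, x_{0}) \to P_{j}(X, \mathbb{Z})$ is a rational isomorphism; consequently both its kernel (which contains $\pi_{j}(X, x_{0})_{\text{tor}}$) and its cokernel are finite abelian groups.

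Second, I would compare $P_{j}(X, \mathbb{Z}) \otimes \mathbb{Z}/\ell$ with $P_{j}(X, \mathbb{Z}/\ell)$. For primes $\ell$ avoiding the torsion orders of $H_{i}(X, \mathbb{Z})$ for $0 \leq i \leq j$, the universal coefficient theorem and the K\"unneth formula identify the mod-$\ell$ homology groups (both the source and the target of the reduced coproduct) with their integral counterparts tensored with $\mathbb{Z}/\ell$, and show that the mod-$\ell$ reduced coproduct is precisely the reduction of $\bar{\Delta}_{\mathbb{Z}}$. A snake lemma argument applied to the defining exact sequence $0 \to P_{j}(X, \mathbb{Z}) \to H_{j}(X, \mathbb{Z}) \to \text{Im}(\bar{\Delta}_{\mathbb{Z}}) \to 0$, together with the vanishing of $\text{Tor}(\text{Im}(\bar{\Delta}_{\mathbb{Z}}), \mathbb{Z}/\ell)$ for such $\ell$, then produces a canonical isomorphism $P_{j}(X, \mathbb{Z}) \otimes \mathbb{Z}/\ell \xrightarrow{\sim} P_{j}(X, \mathbb{Z}/\ell)$.

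Finally, I would take $C(X, j)$ to be the least common multiple of the exponents of the kernel and cokernel of $\pi_{j}(X, x_{0}) \to P_{j}(X, \mathbb{Z})$ together with the torsion orders of $H_{i}(X, \mathbb{Z})$ for $0 \leq i \leq j$. By construction $|\pi_{j}(X, x_{0})_{\text{tor}}|$ divides $C(X, j)$, and for every prime $\ell \nmid C(X, j)$ the composite
\[
\pi_{j}(X, x_{0}) \otimes \mathbb{Z}/\ell \xrightarrow{\sim} P_{j}(X, \mathbb{Z}) \otimes \mathbb{Z}/\ell \xrightarrow{\sim} P_{j}(X, \mathbb{Z}/\ell)
\]
is an isomorphism, and coincides with $\text{Hur}_{j, \mathbb{Z}/\ell}$ by the naturality of the Hurewicz construction. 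The main obstacle lies in the second step: one must carefully verify that the kernel of an integral map remains compatible with $\otimes \mathbb{Z}/\ell$ precisely when the relevant $\text{Tor}$ groups vanish, and that each such obstruction can indeed be packaged into a single finite constant. The finite generation of $H_{\ast}(X, \mathbb{Z})$ inherited from the niceness of $X$ ensures that this packaging is possible.
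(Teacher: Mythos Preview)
Your overall strategy is sound and aligned with what the paper sketches (Cartan--Serre rationally, then universal-coefficient bookkeeping to pass to $\mathbb{Z}/\ell$), but two steps fail as written.

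First, the parenthetical in your first paragraph is backwards. The kernel of $\pi_{j}(X,x_{0}) \to P_{j}(X,\mathbb{Z})$ is \emph{contained in} $\pi_{j}(X,x_{0})_{\mathrm{tor}}$ (rational injectivity forces the kernel to be torsion), not the other way around: $P_{j}(X,\mathbb{Z})$ sits inside $H_{j}(X,\mathbb{Z})$, which may itself have torsion, so torsion in $\pi_{j}$ need not die under the Hurewicz map. Consequently your claim that $\lvert\pi_{j}(X,x_{0})_{\mathrm{tor}}\rvert$ divides $C(X,j)$ ``by construction'' is unjustified. The fix is trivial: include $\lvert\pi_{j}(X,x_{0})_{\mathrm{tor}}\rvert$ itself as a factor of $C(X,j)$; since the kernel is a subgroup of the torsion, avoiding primes dividing the latter also handles the former.

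Second, in your comparison of $P_{j}(X,\mathbb{Z})\otimes\mathbb{Z}/\ell$ with $P_{j}(X,\mathbb{Z}/\ell)$ you invoke the vanishing of $\mathrm{Tor}\big(\mathrm{Im}(\bar{\Delta}_{\mathbb{Z}}),\mathbb{Z}/\ell\big)$. That vanishing is automatic once the $H_{i}(X,\mathbb{Z})$ have no $\ell$-torsion (a subgroup inherits this), and it only gives injectivity. For \emph{surjectivity} you need $\mathrm{Tor}\big(\mathrm{coker}(\bar{\Delta}_{\mathbb{Z}}),\mathbb{Z}/\ell\big)=0$, i.e.\ that $\mathrm{Im}(\bar{\Delta}_{\mathbb{Z}})$ is $\ell$-saturated in its target: otherwise an $x\in H_{j}(X,\mathbb{Z})$ with $\bar{\Delta}_{\mathbb{Z}}(x)$ divisible by $\ell$ need not lie in $P_{j}(X,\mathbb{Z})+\ell H_{j}(X,\mathbb{Z})$, and $P_{j}(X,\mathbb{Z}/\ell)$ is strictly larger than $P_{j}(X,\mathbb{Z})\otimes\mathbb{Z}/\ell$. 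Since $\mathrm{coker}(\bar{\Delta}_{\mathbb{Z}})$ is finitely generated, its torsion order is another finite constant to fold into $C(X,j)$. With these two adjustments your argument goes through.
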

	
\begin{proof}
The proof of the statement above is an easy exercise using the standard structure theorem on $H^{\ast}(X, \mathbb{Q})$. For details, see Theorem~5.3 in \cite{reduction}. The steps in the demonstration yield an effective construction of $C(X,j)$ provided one knows an explicit set of algebra generators for $H^{\ast}(X, \mathbb{Q})$.    
\end{proof}
	
The main class of topological spaces relevant to this article stems from the inductive limit of the general and special linear groups.  Let $R$ be a commutative ring. We have a natural inclusion of groups  
\[\varphi_{n}^{n+1}: \text{GL}_n(R) \to \text{GL}_{n+1}(R),\; g \mapsto \begin{pmatrix}
		g & 0\\
		0 & 1
	\end{pmatrix},\]
for each $n \geq 1$. Note that $\varphi_{n}^{n+1}\big(\text{SL}_n(R)\big) \subseteq \text{SL}_{n+1}(R)$. Set $\text{GL}_{\infty}(R) = \varinjlim_{n} \text{GL}_{n}(R)$ and $\text{SL}_{\infty}(R) = \varinjlim_{n} \text{SL}_{n}(R)$. Here, $\varphi_{n}^{n+1}$ is compatible with determinants, and there is a well-defined determinant map $\text{GL}_{\infty}(R) \to R^{\times}$. We identify $\text{SL}_{\infty}(R)$ with the kernel of this map so that $\text{SL}_n(R) = \text{GL}_n(R) \cap \text{SL}_{\infty}(R)$. Now suppose $R$ is a noetherian commutative ring with finite Krull dimension. Then van der Kallen's theorem on stability asserts that there exists a positive integer $e$ depending only on $R$ so that for any coefficient ring $\mathds{k}$ and $j \geq 0$ the natural map \[\varphi_{n,j}^{\infty}: H_{j}\big(G_n(R),\mathds{k}\big) \to H_{j}\big(G_{\infty}(R), \mathds{k}\big) \hspace{.3cm}(\substack{G \in \{\text{GL}, \text{SL}\}})\] 
is an isomorphism whenever $n \geq 2j+e$ \cite{kallen}. In practical applications, $R$ is either a finite field or the ring of integers of a number field. Under this additional hypothesis, $H_{j}\big(G_n(R), \mathbb{Z}\big)$ is a finitely generated abelian group for each $j, n \geq 1$. Thus, $BG_{\infty}(R)$ is a nice topological space in our cases of interest. We next recall Quillen's calculation of the cohomology of $\text{GL}_{\infty}$ with values in a finite field and describe a mod $\ell$ Cartan-Serre type theorem in this context. 
	
For any associative ring $R$ the higher $K$-groups of $R$ are defined as 
\[K_j(R) = \pi_{j}\big(B\text{GL}_{\infty}(R)^{+}\big) \hspace{.3cm}(\substack{j \geq 1})\]   
where `$+$' refers to Quillen's plus construction in topology. The space $B\text{GL}_{\infty}(R)^{+}$ possesses a natural $H$-law arising from operations on $\text{GL}_{\infty}$ that turns it into a commutative $H$-group \cite[2]{srinivas}. Let $\mathbb{F}_q$ denote the finite field with $q$ elements and $\ell$ be an odd prime with $\ell \nmid q$. Suppose that $o_{\ell}(q)$ is the smallest positive integer so that $q^{o_{\ell}(q)} \equiv 1($mod $\ell)$. Quillen \cite{quillen} constructs distinguished classes $\{c_{jo_{\ell}(q)} \mid j \geq 1\}$ and $\{e_{jo_{\ell}(q)} \mid j \geq 1\}$ in the cohomology algebra $H^{*}\big(\text{GL}_{\infty}(\mathbb{F}_q), \mathbb{Z}/\ell\big)$ with $\text{deg } c_{jo_{\ell}(q)} = 2jo_{\ell}(q)$ and $\text{deg } e_{jo_{\ell}(q)}=2jo_{\ell}(q)-1$ which give rise to an isomorphism of graded $\mathbb{Z}/\ell$-algebras as follows:  
\begin{equation}
\label{2.1}
H^{*}\big(\text{GL}_{\infty}(\mathbb{F}_q), \mathbb{Z}/\ell\big) \cong \text{Sym}_{\mathbb{Z}/\ell}[c_{jo_{\ell}(q)} \mid j \geq 1] \otimes \Lambda_{\mathbb{Z}/\ell}[e_{jo_{\ell}(q)} \mid j \geq 1].  
\end{equation}
Here $\text{Sym}_{\mathbb{Z}/\ell}[\cdot]$, resp. $\Lambda_{\mathbb{Z}/\ell}[\cdot]$, denotes the free polynomial, resp. exterior, algebra over $\mathbb{Z}/\ell$. The isomorphism \eqref{2.1} implies that 
\begin{equation}
\label{2.2}
I^{2j-1}\big(\text{GL}_{\infty}(\mathbb{F}_q), \mathbb{Z}/\ell\big) = \begin{cases}
			\mathbb{Z} /\ell\, e_{j}, & \text{if $o_{\ell}(q) \mid j$;}\\
			\{0\}, & \text{if $o_{\ell}(q) \nmid j$.}
		\end{cases}\hspace{.3cm}(\substack{j \geq 1})
\end{equation}
One uses the duality between homology and cohomology to discover that 
\[\text{dim}_{\mathbb{Z}/\ell} P_{2j-1}\big(\text{GL}_{\infty}(\mathbb{F}_q), \mathbb{Z}/\ell\big) = \begin{cases}
		1, & \text{$o_{\ell}(q) \mid j$;}\\
		0, & \text{$o_{\ell}(q) \nmid j$.}
	\end{cases}\hspace{.3cm}(\substack{j \geq 1})\]
	
\begin{lemma}
\label{lemma2.3}
Let $2 \leq j < \ell$ and assume that $q^j \equiv 1($\emph{mod} $\ell)$. Then \[\emph{Hur}_{2j-1, \mathbb{Z}/\ell} : K_{2j-1}(\mathbb{F}_q) \otimes \mathbb{Z}/\ell \to P_{2j-1}\big(\emph{GL}_{\infty}(\mathbb{F}_q), \mathbb{Z}/\ell\big)\]
is an isomorphism. 
\end{lemma}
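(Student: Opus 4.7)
The plan is to identify both sides of the displayed map as one-dimensional $\mathbb{Z}/\ell$-vector spaces and then establish that the map is nonzero. For the source, Quillen's computation of the $K$-theory of a finite field gives $K_{2j-1}(\mathbb{F}_q) \cong \mathbb{Z}/(q^j - 1)$, so the hypothesis $\ell \mid q^j - 1$ yields $K_{2j-1}(\mathbb{F}_q) \otimes \mathbb{Z}/\ell \cong \mathbb{Z}/\ell$. For the target, the hypothesis is equivalent to $o_\ell(q) \mid j$, so \eqref{2.2}, combined with the duality between primitives in the cocommutative Hopf algebra $H_{\ast}(\text{GL}_\infty(\mathbb{F}_q), \mathbb{Z}/\ell)$ and indecomposables in its commutative dual (both of finite type), forces $\dim_{\mathbb{Z}/\ell} P_{2j-1}(\text{GL}_\infty(\mathbb{F}_q), \mathbb{Z}/\ell) = 1$. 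It therefore suffices to show that the Hurewicz map does not vanish.

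To establish nonvanishing, write $X = B\text{GL}_\infty(\mathbb{F}_q)^{+}$ and note that $\ell \neq p = \text{char}(\mathbb{F}_q)$ since $\ell \mid q^j - 1$. Quillen's Brauer-lift construction provides a natural map $\rho : X \to BU$ realizing, after $\ell$-completion, the equivalence $X^{\wedge}_{\ell} \simeq \text{hofib}(\psi^q - 1 : BU^{\wedge}_{\ell} \to BU^{\wedge}_{\ell})$. The long exact sequence of homotopy yields a surjection $\partial : \pi_{2j}(BU) \otimes \mathbb{Z}/\ell \twoheadrightarrow \pi_{2j-1}(X) \otimes \mathbb{Z}/\ell$, and a Serre spectral sequence analysis identifies Quillen's generator $e_{j} \in H^{2j-1}(X, \mathbb{Z}/\ell)$ of \eqref{2.1}, up to a unit, as the transgression of the mod $\ell$ Chern class $c_{j} \in H^{2j}(BU, \mathbb{Z}/\ell)$. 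The classical Bott computation, via the Chern character and Newton's identities, shows that the integral Hurewicz map $\pi_{2j}(BU) \to P_{2j}(BU, \mathbb{Z})$ is multiplication by $j!$; under the hypothesis $j < \ell$ this factor is a unit mod $\ell$, so the mod $\ell$ Hurewicz for $BU$ in degree $2j$ is nonzero. Chasing the naturality of the Hurewicz map across the transgression, the image under $\partial$ of a generator of $\pi_{2j}(BU) \otimes \mathbb{Z}/\ell$, which generates $K_{2j-1}(\mathbb{F}_q) \otimes \mathbb{Z}/\ell$, pairs nontrivially with $e_{j}$ and hence has nonzero Hurewicz image.

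The main obstacle is formalizing the naturality of the Hurewicz map across the Quillen fibration: the connecting homomorphism $\partial$ on homotopy has no literal analogue in homology, so one must express the required compatibility via the behavior of the transgression on primitives and indecomposables in the Serre spectral sequence. The key input is the explicit identification of $e_{j}$ as the transgression of $c_{j}$ modulo decomposables; once this is granted, the remaining work is a careful diagram chase, with the precise power of the factorial appearing in Bott's formula being the most delicate piece of bookkeeping.
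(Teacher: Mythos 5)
Your proof is correct in outline and takes a genuinely different route from the paper's. The paper dispatches the lemma by citing Lemma~5.4 of \cite{reduction}, whose argument runs through the first \'etale Chern class map: both $K_{2j-1}(\mathbb{F}_q) \otimes \mathbb{Z}/\ell$ and the relevant (co)homology are compared with the Galois cohomology group $H^1(\mathbb{F}_q, \mathbb{Z}/\ell(j))$, and the Hurewicz map is matched against this identification. You instead work directly with Quillen's fibration $X^{\wedge}_{\ell} \simeq \operatorname{hofib}(\psi^q - 1: BU^{\wedge}_{\ell} \to BU^{\wedge}_{\ell})$ and reduce the nonvanishing to classical algebra over $BU$ via the Chern character. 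Both routes correctly identify source and target as rank one over $\mathbb{Z}/\ell$ --- the source via Quillen's $K_{2j-1}(\mathbb{F}_q) \cong \mathbb{Z}/(q^j-1)$, the target via \eqref{2.2} and homology/cohomology duality --- so the content of the lemma is the nonvanishing. Your route has the virtue of making transparent where the hypothesis $j < \ell$ enters (inverting the Bott factorial), whereas the paper's citation packages the Galois-theoretic content in the form the rest of the article actually uses, notably in the proof of Theorem~\ref{theorem5.5}. One arithmetic slip: the Bott/Chern-character computation gives that the integral Hurewicz map $\pi_{2j}(BU) \to P_{2j}(BU, \mathbb{Z})$ is multiplication by $\pm (j-1)!$ rather than $j!$ (via $\mathrm{ch}_j = \pm c_j/(j-1)! + \text{decomposables}$); this does not affect the conclusion, since either factorial is a unit modulo $\ell$ when $j < \ell$. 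And, as you already flag, the compatibility between the homotopy connecting map $\partial$ and the cohomological transgression relating $e_j$ to $c_j$ is the genuinely delicate step if this route were written out in full --- standard in spirit but not a purely formal diagram chase.
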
  
	
\begin{proof}
The assertion is a consequence of the properties of the first \'etale Chern class map in $K$-theory. For details, see Lemma~5.4 in \cite{reduction}.  
\end{proof}

\subsection{Comparison between $\text{GL}_{\infty}$ and $\text{SL}_{\infty}$}
\label{section2.2}
Let $R$ be a noetherian commutative ring of finite Krull dimension. The goal of the current section is to compare the cohomology rings $H^{*}\big(\text{GL}_{\infty}(R), \mathds{k}\big)$ and $H^{*}\big(\text{SL}_{\infty}(R), \mathds{k}\big)$ for appropriate choices of $\mathds{k}$. Our treatment follows a spectral sequence argument by Borel  \cite{borel2} originally devised for the ring of integers of the number fields with coefficients in a characteristic $0$ field. 
	
For now, let $\mathds{k}$ denote an arbitrary commutative ring. We fix a positive integer $d$ and choose a large enough integer $n_0$ so that for each $n \geq n_0$ the inclusions $\text{GL}_{n}(R) \xhookrightarrow{} \text{GL}_{\infty}(R)$ and $\text{SL}_{n}(R) \xhookrightarrow{} \text{SL}_{\infty}(R)$ induces isomorphisms in homology and cohomology in degrees $\leq d$. Assume that $n \geq n_0$. For $a \in R^{\times}$ let $\Delta(a)$ be the diagonal matrix in $\text{GL}_{n+1}(R)$ whose $(n+1, n+1)$-th entry equals $a$ and all other diagonal entries equal $1$. Set \[Z_{n+1}(R) = \{\Delta(a) \mid a \in R^{\times}\}.\] Here the homomorphism $a \mapsto \Delta(a)$ yields a splitting to $\text{det}_{n+1}: \text{GL}_{n+1}(R) \to R^{\times}$. We consider $\text{SL}_{n}(R)$ as a subgroup of $\text{GL}_{n+1}(R)$ and write 
\[H_{n+1}(R) := \text{SL}_{n}(R) \times Z_{n+1}(R) \cong \text{SL}_{n}(R) \times R^{\times}.\] 
The inclusion $H_{n+1}(R) \xhookrightarrow{} \text{GL}_{n+1}(R)$ induces a morphism of extensions 
\begin{equation*}
\begin{tikzcd}
1 \arrow[r] & \text{SL}_{n}(R)\arrow[r,] \arrow[d, hook, "\varphi^{n+1}_n"] & H_{n+1}(R) \arrow[r] \arrow[d, hook] & R^{\times} \arrow[r] \arrow[d, "\text{Id}_{R^{\times}}"] & 1 \\
1 \arrow[r] & \text{SL}_{n+1}(R)\arrow[r] & \text{GL}_{n+1}(R) \arrow[r, "\text{det}_{n+1}"] & R^{\times} \arrow[r] & 1. 
\end{tikzcd}
\end{equation*} 
Suppose that $\prescript{I}{}{E}^{p,q}_{r}$, resp. $\prescript{II}{}{E}^{p,q}_{r}$, is the cohomological Hochschild-Serre spectral sequence for the trivial module $\mathds{k}$ attached to the bottom, resp. top row of the diagram above. Let $\{f_r\}_{r \geq 2}$ be the morphism between $\prescript{I}{}{E}_{r}^{p,q}$ and $\prescript{II}{}{E}_{r}^{p,q}$ arising from the morphism of extension. By choice of $n$ the restriction map $H^{q}\big(\text{SL}_{n+1}(R), \mathds{k}\big) \to H^{q}\big(\text{SL}_{n}(R), \mathds{k}\big)$ induces isomorphism in degrees $\leq d$. Note that $R^{\times}$ in the top row acts trivially on $H^{\ast}\big(\text{SL}_n(R), \mathds{k}\big)$. Therefore $H^{q}\big(\text{SL}_{n+1}(R), \mathds{k}\big)$ is also a trivial $R^{\times}$-module and $f_{2}^{p,q}$ is an isomorphism for each $0 \leq q \leq d$. It follows that the induced map 
\begin{equation}
\label{2.3}
H^{j}\big(\text{GL}_{n+1}(R), \mathds{k}\big) \to H^{j}\big(H_{n+1}(R), \mathds{k}\big)
\end{equation}
is an isomorphism whenever $0 \leq j \leq d$; cf. \cite[3.3]{user's guide}. One can simplify the cohomology group appearing in the target of \eqref{2.3} using the Kunneth formula. Let $R$ be a commutative ring so that $B\text{SL}_{\infty}(R)$ is a nice topological space and $R^{\times}$ is a finitely generated abelian group. For our applications, $R$ is either the ring of integers of a number field or a finite field. Moreover, assume that $\mathds{k}$ is a field of characteristic $\ell$. We know that \cite[Ch. XII]{cartan-eilenberg}
\begin{equation*}
\begin{aligned}
H^{\ast}(\mathbb{Z}, \mathds{k})  & = \Lambda_{\mathds{k}}[x], \hspace{5cm} \substack{\text{deg}(x) = 1;}\\
H^{\ast}(\mathbb{Z}/N, \mathds{k}) & = \Lambda_{\mathds{k}}[x] \otimes \text{Sym}_{\mathds{k}}[y],\hspace{.3cm}\substack{\text{$\ell \lvert N$ if $\ell > 2$} \\ \text{$\ell^2 \lvert N$ if $\ell = 2$},} \hspace{.5cm}\substack{\text{deg}(x) = 1, \text{deg}(y) = 2}; \\
H^{\ast}(\mathbb{Z}/N, \mathds{k}) & = \text{Sym}_{\mathds{k}}[x],\hspace{2cm}\substack{\text{$\ell = 2$, $2 \lvert N$} \\ \text{and $4 \nmid N$},} \hspace{1.2cm}\substack{\text{deg}(x) = 1;} \\
H^{\ast}(\mathbb{Z}/N, \mathds{k}) & = \mathds{k}, \hspace{3.5cm}\substack{\ell \nmid N.}
\end{aligned}
\end{equation*}
Therefore $H^{\ast}(R^{\times}, \mathds{k})$ is a finitely generated $\mathds{k}$-algebra generated by the elements in degree $\leq 2$.  
	
\begin{theorem}
\label{theorem2.4}
Let $R$ and $\mathds{k}$ be as above. Then the inclusion $\emph{SL}_{\infty}(R) \xhookrightarrow{} \emph{GL}_{\infty}(R)$ induces isomorphisms 
\begin{equation*}
\begin{gathered}
P_d\big(\emph{SL}_{\infty}(R), \mathds{k}\big) \cong P_d\big(\emph{GL}_{\infty}(R), \mathds{k}\big), \\  I^d\big(\emph{GL}_{\infty}(R), \mathds{k}\big) \cong I^d\big(\emph{SL}_{\infty}(R), \mathds{k}\big)
\end{gathered}\hspace{1cm}(\substack{d \geq 3})
\end{equation*}
\end{theorem}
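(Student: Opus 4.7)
The plan is to establish the cohomology indecomposable statement first and then transfer it to the homology primitive statement by the standard graded duality between indecomposables of cohomology and primitives of homology. The cohomology step leverages the ring isomorphism $H^{j}\bigl(\text{GL}_{n+1}(R), \mathds{k}\bigr) \xrightarrow{\sim} H^{j}\bigl(H_{n+1}(R), \mathds{k}\bigr)$ for $j \leq d$ that was established in the preceding paragraphs via the Hochschild--Serre comparison, combined with the Kunneth identification
\[H^{*}\bigl(H_{n+1}(R), \mathds{k}\bigr) \cong H^{*}\bigl(\text{SL}_{n}(R), \mathds{k}\bigr) \otimes_{\mathds{k}} H^{*}(R^{\times}, \mathds{k})\]
of graded $\mathds{k}$-algebras. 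Under this identification, the restriction induced by the inclusion $g \mapsto (g, 1)$ of $\text{SL}_{n}(R)$ into $H_{n+1}(R)$ is the projection onto the first Kunneth factor, namely, the augmentation of the second factor.

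For any two augmented graded $\mathds{k}$-algebras $A$ and $B$, a direct computation with the augmentation ideal gives a canonical decomposition $I^{*}(A \otimes_{\mathds{k}} B) \cong I^{*}(A) \oplus I^{*}(B)$. The explicit description of $H^{*}(R^{\times}, \mathds{k})$ recalled immediately before the theorem shows that it is generated as a $\mathds{k}$-algebra by elements of degree at most $2$; hence $I^{d}\bigl(H^{*}(R^{\times}, \mathds{k})\bigr) = 0$ for each $d \geq 3$. Assembling these ingredients and passing to the inductive limit in $n$ via van der Kallen's stability, the restriction map furnishes the isomorphism
\[I^{d}\bigl(H^{*}(\text{GL}_{\infty}(R), \mathds{k})\bigr) \xrightarrow{\sim} I^{d}\bigl(H^{*}(\text{SL}_{\infty}(R), \mathds{k})\bigr) \qquad (d \geq 3).\]

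To obtain the primitive statement I would dualize. Both $B\text{GL}_{\infty}(R)$ and $B\text{SL}_{\infty}(R)$ are nice topological spaces, so in each degree the homology and cohomology are finite-dimensional over $\mathds{k}$ and naturally dual, and the diagonal coproduct on $H_{*}$ is dual to the cup product on $H^{*}$. A brief diagram chase with the defining short exact sequences of decomposables in cohomology and primitives in homology identifies $P_{d}(X, \mathds{k})$ with the $\mathds{k}$-linear dual of $I^{d}(H^{*}(X, \mathds{k}))$ in each degree. Dualizing the preceding isomorphism therefore yields $P_{d}\bigl(\text{SL}_{\infty}(R), \mathds{k}\bigr) \cong P_{d}\bigl(\text{GL}_{\infty}(R), \mathds{k}\bigr)$, induced by the inclusion $\text{SL}_{\infty}(R) \hookrightarrow \text{GL}_{\infty}(R)$.

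The main subtle point to guard against is that for finite $n$ the groups $\text{SL}_{n}(R)$ are non-abelian and the classifying spaces $B\text{SL}_{n}(R)$ carry no H-space structure, so Hopf-algebra machinery is not available at the finite level. One therefore has to phrase the indecomposable computation purely in terms of augmented algebras, and only introduce the coalgebra structure on the homology side of the duality at the infinite-level space, where only the functorial diagonal is needed. One also has to check that the limiting isomorphism is genuinely induced by the inclusion $\text{SL}_{\infty}(R) \hookrightarrow \text{GL}_{\infty}(R)$, which follows from the commuting factorizations $\text{SL}_{n} \hookrightarrow H_{n+1} \hookrightarrow \text{GL}_{n+1}$ and $\text{SL}_{n} \hookrightarrow \text{SL}_{n+1} \hookrightarrow \text{GL}_{n+1}$ established in the preceding discussion.
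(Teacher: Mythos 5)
Your proposal is correct and follows essentially the same route as the paper: reduce to indecomposables, use the algebra isomorphism $H^{\ast}\bigl(\text{GL}_{n+1}(R), \mathds{k}\bigr)^{\leq d} \cong H^{\ast}\bigl(H_{n+1}(R), \mathds{k}\bigr)^{\leq d}$, exploit the Kunneth decomposition and the fact that $H^{\ast}(R^{\times}, \mathds{k})$ is generated in degrees $\leq 2$ to kill the second indecomposable summand in degree $d \geq 3$, and dualize. You simply spell out more explicitly the tensor-factor decomposition of indecomposables, the degree-truncation bookkeeping, and the compatibility of the stabilizing diagrams, which the paper leaves implicit.
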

	
\begin{proof}
Let $d$ be an integer $\geq 3$. In the light of duality between the primitive and indecomposable elements, it suffices to verify the statement for $I^{\ast}$. We choose $n \geq n_0$ as in the discussion above. Then the restriction map \eqref{2.3} induces a $\mathds{k}$-algebra isomorphism
\[H^{\ast}\big(\text{GL}_{n+1}(R), \mathds{k}\big)^{\leq d} \xrightarrow{\cong} H^{\ast}\big(H_{n+1}(R), \mathds{k}\big)^{\leq d}\]   
where $(\cdot)^{\leq d}$ denotes the naive truncation at degree $d$. Since $H^{*}(R^{\times}, \mathds{k})$ is generated by the elements in degrees $\leq 2$ the isomorphism above gives rise to the following isomorphism of indecomposable subspaces: 
\[I^{d}\big(\text{GL}_{n+1}(R), \mathds{k}\big) \xrightarrow{\cong}I^{d}\big(\text{SL}_{n}(R), \mathds{k}\big).\]
Proof of the theorem is now clear.     
\end{proof}
	
This comparison allows us to factor the Hurewicz map attached to $\text{GL}_{\infty}(R)$ via $\text{SL}_{\infty}(R)$. In more detail, if the theorem holds then there exists a homomorphism $\overline{\text{Hur}}_{j, \mathds{k}}: K_{j}(R) \otimes_{\mathbb{Z}} \mathds{k} \to P_{j}\big(\text{SL}_{\infty}(R), \mathds{k}\big)$
so that the following diagram commutes:
\begin{equation*}
\begin{tikzcd}[column sep= large]
K_{j}(R) \otimes \mathds{k} \arrow[r, "\overline{\text{Hur}}_{j, \mathds{k}}"] \arrow[dr, "\text{Hur}_{j, \mathds{k}}"] & H_{j}\big(\text{SL}_{\infty}(R), \mathds{k}\big) \arrow[d]\\
& H_{j}\big(\text{GL}_{\infty}(R), \mathds{k}\big). 
\end{tikzcd} \hspace{.5cm}(\substack{j \geq 3})
\end{equation*}
Let the notation be as in the discussion before Lemma~\ref{lemma2.3}. Now suppose $\bar{e}_{\bullet}$, resp. $\bar{c}_{\bullet}$, is the image of $e_{\bullet}$, resp. $c_{\bullet}$ under the natural restriction map  $H^{\ast}\big(\text{GL}_{\infty}(\mathbb{F}_q), \mathbb{Z}/\ell\big) \to H^{\ast}\big(\text{SL}_{\infty}(\mathbb{F}_q), \mathbb{Z}/\ell\big)$. It is clear that $\bar{e}_{\bullet}$-s and $\bar{c}_{\bullet}$-s give rise to nonzero elements in $I^{\ast}\big(\text{SL}_{\infty}(\mathbb{F}_q), \mathbb{Z}/\ell\big)$ in degrees $\geq 3$. In particular an analogue of \eqref{2.2} holds for $\text{SL}_{\infty}$ whenever $j \geq 2$.
	
\begin{corollary}
\label{corollary2.5}
Let the hypotheses be as in Lemma~\ref{lemma2.3}. Suppose that $a \in K_{2j-1}(\mathbb{F}_q) \otimes \mathbb{Z}/\ell$. Then \[\big\langle \overline{\emph{Hur}}_{2j-1, \mathbb{Z}/\ell}(a), \bar{e}_j \big\rangle \neq 0 \iff a \neq 0\] where $\langle \cdot, \cdot \rangle$ denotes the pairing between homology and cohomology. 
\end{corollary}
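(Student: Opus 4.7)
The plan is to reduce the assertion to the analogous pairing on $\text{GL}_{\infty}(\mathbb{F}_q)$, where everything follows from Lemma~\ref{lemma2.3} together with the dimension count in \eqref{2.2}. Since $j \geq 2$ we have $2j-1 \geq 3$, so Theorem~\ref{theorem2.4} and the factorization triangle displayed immediately after its proof apply. Write $i: \text{SL}_{\infty}(\mathbb{F}_q) \hookrightarrow \text{GL}_{\infty}(\mathbb{F}_q)$ for the natural inclusion.

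First I would invoke naturality of the Kronecker pairing. By construction $\bar{e}_j = i^{\ast}e_j$, and the factorization of the Hurewicz map gives $i_{\ast}\overline{\text{Hur}}_{2j-1, \mathbb{Z}/\ell}(a) = \text{Hur}_{2j-1, \mathbb{Z}/\ell}(a)$. Hence
\[\bigl\langle \overline{\text{Hur}}_{2j-1, \mathbb{Z}/\ell}(a), \bar{e}_j \bigr\rangle = \bigl\langle \overline{\text{Hur}}_{2j-1, \mathbb{Z}/\ell}(a), i^{\ast}e_j \bigr\rangle = \bigl\langle i_{\ast}\overline{\text{Hur}}_{2j-1, \mathbb{Z}/\ell}(a), e_j \bigr\rangle = \bigl\langle \text{Hur}_{2j-1, \mathbb{Z}/\ell}(a), e_j \bigr\rangle,\]
so it suffices to prove the analogous equivalence for the $\text{GL}_{\infty}$ pairing.

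Next I would perform the dimension count. The hypothesis $q^j \equiv 1 \pmod{\ell}$ forces $o_{\ell}(q) \mid j$, so by \eqref{2.2} the indecomposable space $I^{2j-1}\bigl(\text{GL}_{\infty}(\mathbb{F}_q), \mathbb{Z}/\ell\bigr) = \mathbb{Z}/\ell \cdot e_j$ is one-dimensional. By the duality between primitives and indecomposables recalled in Section~\ref{section2.1}, the space $P_{2j-1}\bigl(\text{GL}_{\infty}(\mathbb{F}_q), \mathbb{Z}/\ell\bigr)$ is also one-dimensional and the Kronecker pairing restricts non-degenerately to $P_{2j-1} \times I^{2j-1}$. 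Therefore evaluation against $e_j$ is a linear isomorphism $P_{2j-1}\bigl(\text{GL}_{\infty}(\mathbb{F}_q), \mathbb{Z}/\ell\bigr) \xrightarrow{\cong} \mathbb{Z}/\ell$.

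Finally, Lemma~\ref{lemma2.3} identifies $\text{Hur}_{2j-1, \mathbb{Z}/\ell}$ with an isomorphism $K_{2j-1}(\mathbb{F}_q) \otimes \mathbb{Z}/\ell \xrightarrow{\cong} P_{2j-1}\bigl(\text{GL}_{\infty}(\mathbb{F}_q), \mathbb{Z}/\ell\bigr)$. Composing with the evaluation isomorphism above yields $\bigl\langle \text{Hur}_{2j-1, \mathbb{Z}/\ell}(a), e_j \bigr\rangle \neq 0 \iff a \neq 0$, which the reduction step translates to the stated assertion. There is no genuine obstacle; the corollary is essentially a diagram chase packaging Lemma~\ref{lemma2.3}, Theorem~\ref{theorem2.4}, and the standard duality between $P_{\ast}$ and $I^{\ast}$.
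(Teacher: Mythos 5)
Your proof is correct and follows essentially the same route as the paper: the paper's one-line proof points to ``Lemma~\ref{lemma2.3} and the discussion above,'' namely the factorization triangle for $\overline{\text{Hur}}$, the fact that $\bar{e}_j$ spans $I^{2j-1}\bigl(\text{SL}_{\infty}(\mathbb{F}_q),\mathbb{Z}/\ell\bigr)$ via Theorem~\ref{theorem2.4} and \eqref{2.2}, and the nondegenerate $P_*$--$I^*$ duality. You package the same inputs by transporting the Kronecker pairing to $\text{GL}_{\infty}$ via $\bar{e}_j = i^{\ast}e_j$ and $i_{\ast}\overline{\text{Hur}} = \text{Hur}$ rather than arguing directly on the $\text{SL}_{\infty}$ side, which is a cosmetic rather than a substantive difference.
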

	
\begin{proof}
Follows from Lemma~\ref{lemma2.3} and the discussion above.  
\end{proof}

\section{Hecke algebra and congruence classes}
\label{section3}
\subsection{Background on Hecke algebra}
\label{section3.1}
Let the notation be as in Section~\ref{section1.1} and $\mathds{k}$ be an arbitrary commutative ring. The $\mathds{k}$-linear \textit{big Hecke algebra} attached to $(\mathsf{G}, \mathsf{K}_{f})$ consists of compactly supported functions $h: \mathsf{G}(\mathbb{A}_{F,f}) \to \mathds{k}$ that are biinvariant under the action of $\mathsf{K}_f$. We endow it with an algebra structure using convolution with respect to the Haar measure on $\mathsf{G}(\mathbb{A}_{F,f})$ normalized so that $\mathsf{K}_f$ has unit volume. Each element of the big Hecke algebra is a finite linear combination of indicator functions of double cosets of $\mathsf{K}_f$. The following discussion briefly recalls the action of the double coset operators on the cohomology of $Y(\mathsf{K}_f)$ with constant coefficients. Let $g_f \in \mathsf{G}(\mathbb{A}_{F,f})$ and consider the double coset operator $[\mathsf{K}_f g_f \mathsf{K}_f]$ where $[\cdot]$ stands for indicator function. Set  $\mathsf{K}_{f,g_f} := \mathsf{K}_{f} \cap g_f\mathsf{K}_{f}g_f^{-1}$. There are two finite covering maps 
\begin{equation*}
\begin{gathered}
\pi_{g_f}: Y(\mathsf{K}_{f, g_f}) \to Y(\mathsf{K}_f), \; [g] \mapsto [g];\\ 
\widetilde{\pi}_{g_f}: Y(\mathsf{K}_{f, g_f}) \to Y(\mathsf{K}_f), \; [g] \mapsto [gg_f] 
\end{gathered}
\end{equation*} 
arising from the natural inclusion $\mathsf{K}_{f, g_f} \hookrightarrow \mathsf{K}_{f}$ and the twisted inclusion $\mathsf{K}_{f, g_f} \hookrightarrow \mathsf{K}_{f}$, $k \mapsto g_f^{-1}kg_f$, respectively. With $[\mathsf{K}_fg_f\mathsf{K}_f]$ one associates an endomorphism  of $H^{\ast}\big(Y(\mathsf{K}_{f}), \mathds{k}\big)$ as follows: 
\[T_{g_f}: H^{\ast}\big(Y(\mathsf{K}_{f}), \mathds{k}\big) \to H^{\ast}\big(Y(\mathsf{K}_{f}), \mathds{k}\big), \hspace{.3cm}T_{g_f} := \pi_{g_f!}\widetilde{\pi}_{g_f}^{\ast}\] 
where $\pi_{g_f!}$ denotes the push-forward along the finite covering map $\pi_{g_f}$ \cite[3.G]{hatcher}. The operator $T_{g_f}$ does not depend on the choice of representative for the double coset. Moreover, the attachment $[\mathsf{K}_fg_f\mathsf{K}_f] \mapsto T_{g_f}$ extends to an action of the big Hecke algebra on the cohomology. We refer to the image of the big Hecke algebra in the endomorphism ring of cohomology as the \textit{Hecke algebra}. The double cosets supported at a finite place $v$, namely $\{T_{g_v}\mid g_v \in G_v\}$, generate the Hecke algebra at $v$. In this article, we typically restrict ourselves to the subalgebra generated by the double cosets supported at admissible places.

For $g_f \in \mathsf{G}(\mathbb{A}_{F,f})$ define $\text{deg}([\mathsf{K}_f g_f \mathsf{K}_f]) := [\mathsf{K}_f : \mathsf{K}_{f,g_f}]$. Note that the covering map $\pi_{g_f}$ has degree $[\mathsf{K}_f : \mathsf{K}_{f,g_f}]$ and 
\[T_{g_f}(\mathds{1}) = [\mathsf{K}_f : \mathsf{K}_{f,g_f}] \mathds{1}\]
where $\mathds{1}$ is the multiplicative identity of the cohomology ring $H^{\ast}\big(Y(\mathsf{K}_{f}), \mathds{k}\big)$. The attachment $[\mathsf{K}_f g_f \mathsf{K}_f] \mapsto \text{deg}([\mathsf{K}_f g_f \mathsf{K}_f])$ gives rise to a character on the big Hecke algebra which descends to a character on the Hecke algebra thanks to the identity displayed above. Thus, we obtain the \textit{trivial character} on the Hecke algebra described by $T \mapsto \text{deg}(T)$. The unit class in degree zero is an eigenclass for this eigensystem. Since our discussion extensively uses the covering space picture, it is desirable to know a practical criterion regarding the invertibility of the index of the coverings in the $\ell$-adic coefficient ring. For a finite place $v$ of $F$ and $g_v \in G_v$ write $K_{v,g_v} = K_v \cap g_v K_v g_v^{-1}$. Note that the index of the covering $\pi_{g_v}$ equals $[\mathsf{K}_{f}: \mathsf{K}_{f, g_v}] = [K_v : K_{v, g_v}]$.

\begin{proposition}
\label{proposition3.1}
Let $v$ be a good place for the level $\mathsf{K}_f$ so that $\mathsf{G}$ splits at $v$. Now suppose $\ell$ is a rational prime satisfying $q_v \equiv 1($\emph{mod} $\ell)$ and $\ell \nmid \lvert W \rvert$ where $W$ is the Weyl group of $\mathsf{G}$. Then the index of $\pi_{g_v}$ is coprime to $\ell$ for each $g_v \in G_v$. 
\end{proposition}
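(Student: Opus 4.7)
The plan is to use the Cartan decomposition at $v$ to reduce to a cocharacter of a split maximal torus, rewrite the index as a polynomial in $q_v$, and evaluate that polynomial at $q=1$ using the cell structure of the affine Grassmannian. Concretely, since $\mathsf{G}$ splits at $v$ and $K_v$ is hyperspecial, fix a split maximal torus $T \subset \mathsf{G}_{F_v}$ and a Borel containing it adapted to the hyperspecial vertex stabilised by $K_v$. The Cartan decomposition
\[G_v = \bigsqcup_{\lambda \in X_{\ast}(T)^{+}} K_v\, \lambda(\pi_v)\, K_v\]
permits us to replace $g_v$ by a representative of its double coset and assume $g_v = \lambda(\pi_v)$ for some dominant cocharacter $\lambda$. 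The desired index then equals the number of single right cosets in this double coset,
\[[K_v : K_v \cap g_v K_v g_v^{-1}] = |K_v \lambda(\pi_v) K_v / K_v| =: N_{\lambda}(q_v),\]
and by Macdonald's formula for the spherical Hecke algebra, equivalently by counting $\mathbb{F}_{q_v}$-points of the $K_v$-orbit $\mathrm{Gr}^{\lambda}$ of $\lambda(\pi_v) \cdot [K_v]$ in the affine Grassmannian of $\mathsf{G}_{F_v}$, $N_{\lambda}(q_v)$ is the value at $q = q_v$ of a polynomial $N_{\lambda}(q) \in \mathbb{Z}[q]$ with nonnegative integer coefficients.

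The next step is to pin down $N_{\lambda}(1)$. The torus $T$ acts on $\mathrm{Gr}^{\lambda}$ with isolated fixed points, precisely the $W$-translates of $\lambda(\pi_v) \cdot [K_v]$, and thus in bijection with $W/W_{\lambda}$ where $W_{\lambda}$ denotes the stabiliser of $\lambda$ in the Weyl group $W$. The associated Bia{\l}ynicki-Birula decomposition exhibits $\mathrm{Gr}^{\lambda}$ as a disjoint union of affine cells indexed by $W/W_{\lambda}$; counting $\mathbb{F}_q$-points cell by cell and specialising to $q = 1$ gives $N_{\lambda}(1) = |W/W_{\lambda}|$, which divides $|W|$. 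Since $q_v \equiv 1 \pmod{\ell}$ and $N_{\lambda}$ has integer coefficients, $N_{\lambda}(q_v) \equiv N_{\lambda}(1) \pmod{\ell}$, and the hypothesis $\ell \nmid |W|$ then forces the index to be coprime to $\ell$, which is the required conclusion.

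The genuinely delicate point is the identification $N_{\lambda}(1) = |W/W_{\lambda}|$; everything else is either bookkeeping or immediate. For a minuscule $\lambda$ this reduces to the classical fact $|(G/P_{\lambda})(\mathbb{F}_q)| = W(q)/W_{\lambda}(q)$ for Poincar\'e polynomials of Weyl groups, whose value at $q=1$ is $|W/W_{\lambda}|$. For general $\lambda$ one may either appeal directly to the Bia{\l}ynicki-Birula stratification of $\mathrm{Gr}^{\lambda}$, or decompose an arbitrary dominant cocharacter as a sum of minuscules and invoke multiplicativity of $\deg$ as a character of the spherical Hecke algebra to reduce to the minuscule case.
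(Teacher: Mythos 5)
Your proof is correct and arrives at the same arithmetic conclusion as the paper, but by a genuinely different route. The paper first shows that $K_{v,g_v}$ contains the $\mathcal{O}_v$-points of the Levi $\mathcal{M}$ centralizing the cocharacter $\lambda$, then reduces modulo a high power of the uniformizer so that the index divides $[\mathcal{G}(\mathcal{O}_v/\omega_v^m) : \mathcal{M}(\mathcal{O}_v/\omega_v^m)]$, and finally invokes the order formula $\lvert \mathsf{H}(\mathbb{F}_q)\rvert = \lvert\mathsf{T}(\mathbb{F}_q)\rvert\, q^N \sum_{w\in W} q^{\ell(w)}$ (the content of Lemma~\ref{lemma3.2}) to show this index is coprime to $\ell$. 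You instead compute the index \emph{exactly}, identifying it with the degree $N_\lambda(q_v)$ of the spherical double coset, use the fact that $N_\lambda$ is an integer polynomial in $q$, and argue $N_\lambda(1) = \lvert W/W_\lambda\rvert$. Both arguments ultimately see the same quantity $\lvert W/W_\lambda\rvert$ emerge mod $\ell$, and both are correct; yours is cleaner in that it does not require the observation about the Levi contained in $K_{v,g_v}$, while the paper's requires only the elementary order formula for finite reductive groups rather than the degree polynomial for spherical double cosets.

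Two cautionary remarks. First, your appeal to the ``affine Grassmannian of $\mathsf{G}_{F_v}$'' is a heuristic for a mixed-characteristic local field: $G(F_v)/K_v$ is not literally a variety over $\mathbb{F}_{q_v}$, so the Bia\l{}ynicki--Birula decomposition and the $T$-fixed-point count should be read as taking place either in the equal-characteristic analogue or, more directly, as the statement that $N_\lambda(q) = q^{d_\lambda}\sum_{w \in W^{P_\lambda}} q^{\ell(w)}$ coming from the Iwasawa/Iwahori decomposition; the polynomial and its value at $q=1$ are the same, so the conclusion stands. Second, your suggested alternative route of decomposing a dominant cocharacter as a sum of minuscules and using multiplicativity of $\deg$ does not work for a general split group (e.g.\ for $G_2$ there are no minuscule coweights at all, and even for type $C$ not every dominant coweight is a nonnegative combination of minuscule ones; moreover the product $T_\lambda T_\mu$ has lower-order terms, so the identity $\deg(T_{\lambda+\mu}) = \deg(T_\lambda)\deg(T_\mu)$ is false as stated). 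The direct argument through the parabolic $P_\lambda$, or the paper's Levi argument, should be regarded as the load-bearing one.
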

	
Here, the hypothesis that $\mathsf{G}$ is split merely facilitates the technical setting of our proof. However, the restrictions on the coefficient prime $\ell$ are necessary, and without it, the conclusion fails to hold in general. 
	
\begin{proof}
The assumptions on $v$ ensure that $\mathsf{G}$ extends to a split, reductive group scheme $\mathcal{G}$ over $\mathcal{O}_{v}$ such that $K_v = \mathcal{G}(\mathcal{O}_v)$. Moreover, the double coset of $g_v$ admits a representative $a \in \mathsf{A}(F_v)$ where $\mathsf{A}$ is a $F_v$-rational split maximal torus of $\mathsf{G}$ such that $\mathsf{A}$ extends to a maximal split torus of $\mathcal{G}$. Let $\mathcal{M}$ be the centralizer of in $\mathcal{G}$ of the cocharacter in $X_{\ast}(\mathsf{A})$ that corresponds to $a$ under the isomorphism $X_{\ast}(\mathsf{A}) \cong \mathsf{A}(F_v)/ \big(\mathsf{A}(F_v) \cap K_v\big)$ given by $\chi \mapsto \chi(\omega_v)$ where $\omega_v$ is a uniformizer for $\mathcal{O}_v$. We choose a large integer $m$ so that $K_{v, g_v}$ contains the principal subgroup of level $\omega_v^m$. Observe that the image of $K_{v, g_v}$ in $\mathcal{G}(\mathcal{O}_v/\omega_v^m)$ contains $\mathcal{M}(\mathcal{O}_v/\omega_v^m)$. Unfortunately, the index  $[\mathcal{G}(\mathcal{O}_v/\omega_v^m): \mathcal{M}(\mathcal{O}_v/\omega_v^m)]$ is not always a power of $q_v$. Thus, one needs the extra conditions involving the prime $\ell$. Under these additional assumptions Lemma~\ref{lemma3.2} shows that $[\mathcal{G}(\mathbb{F}_v): \mathcal{M}(\mathbb{F}_v)]$ is coprime to $\ell$. As a consequence, the index of $K_{v,g_v}$ in $K_v$ is also coprime to $\ell$.  
\end{proof}   
	
The following lemma utilizes the theory of finite groups of Lie type to compare the size of a reductive group with its maximal torus.
	
\begin{lemma}
\label{lemma3.2}
Let $\mathsf{H}$ be a connected split reductive group over a finite field $\mathbb{F}_q$ and $\mathsf{T}$ be a maximal split torus of $\mathsf{H}$. Suppose that $\ell$ is prime, which does not divide the size of the Weyl group of $\mathsf{H}$ and satisfies $q \equiv 1($\emph{mod} $\ell)$. Then $\frac{\lvert \mathsf{H}(\mathbb{F}_q)\rvert}{\lvert \mathsf{T}(\mathbb{F}_q)\rvert}$ is coprime to $\ell$. 
\end{lemma}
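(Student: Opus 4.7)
The plan is to expand both $|\mathsf{H}(\mathbb{F}_q)|$ and $|\mathsf{T}(\mathbb{F}_q)|$ using the Bruhat decomposition and then reduce the resulting ratio modulo $\ell$. Fix a Borel subgroup $\mathsf{B}$ of $\mathsf{H}$ containing $\mathsf{T}$, and let $\mathsf{U}$ be its unipotent radical, so that $\mathsf{B} = \mathsf{T} \ltimes \mathsf{U}$. Let $N$ denote the number of positive roots (equivalently, $\dim \mathsf{U}$) and $r$ the rank of $\mathsf{T}$. Since $\mathsf{T}$ is a split torus, $|\mathsf{T}(\mathbb{F}_q)| = (q-1)^r$, and $\mathsf{U}$ is an iterated extension of $\mathbb{G}_a$'s, giving $|\mathsf{U}(\mathbb{F}_q)| = q^N$; hence $|\mathsf{B}(\mathbb{F}_q)| = q^N (q-1)^r$.

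Next I would invoke the Bruhat decomposition $\mathsf{H}(\mathbb{F}_q) = \bigsqcup_{w \in W} \mathsf{B}(\mathbb{F}_q)\, \dot w\, \mathsf{B}(\mathbb{F}_q)$ together with the standard cell-count $|\mathsf{B}(\mathbb{F}_q) \dot w \mathsf{B}(\mathbb{F}_q)/\mathsf{B}(\mathbb{F}_q)| = q^{\ell(w)}$, where $\ell(w)$ is the length of $w$. This yields
\[
|\mathsf{H}(\mathbb{F}_q)| = |\mathsf{B}(\mathbb{F}_q)| \sum_{w \in W} q^{\ell(w)} = q^N (q-1)^r \sum_{w \in W} q^{\ell(w)},
\]
and therefore
\[
\frac{|\mathsf{H}(\mathbb{F}_q)|}{|\mathsf{T}(\mathbb{F}_q)|} = q^N \sum_{w \in W} q^{\ell(w)}.
\]

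It remains to show both factors are coprime to $\ell$. From $q \equiv 1 \pmod{\ell}$ we get $\ell \nmid q$ (otherwise $\ell \mid 1$), so $\ell \nmid q^N$. Reducing modulo $\ell$ and using $q \equiv 1$, every term in the sum becomes $1$, so
\[
\sum_{w \in W} q^{\ell(w)} \equiv |W| \pmod{\ell},
\]
which is nonzero modulo $\ell$ by the hypothesis $\ell \nmid |W|$. This finishes the argument.

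I do not expect a serious obstacle: the only mild point is ensuring that the Bruhat-decomposition cell-count and the factorization $\mathsf{B} = \mathsf{T} \ltimes \mathsf{U}$ remain available for a connected split \emph{reductive} group (not just semisimple simply-connected), which is standard since $\mathsf{H}$ splits over $\mathbb{F}_q$ and the relevant schemes are smooth over $\mathbb{F}_q$.
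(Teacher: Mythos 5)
Your proof is correct and takes essentially the same route as the paper: both rest on the identity $\lvert \mathsf{H}(\mathbb{F}_q)\rvert = \lvert \mathsf{T}(\mathbb{F}_q)\rvert\, q^N \sum_{w\in W} q^{\ell(w)}$ and then reduce modulo $\ell$ using $q\equiv 1$ to get $\lvert W\rvert$. The only difference is that you derive this formula from the Bruhat decomposition, whereas the paper simply cites it from Carter.
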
 
	
\begin{proof}
We know that $\lvert \mathsf{H} (\mathbb{F}_q) \rvert = \lvert \mathsf{T}(\mathbb{F}_q) \rvert q^N \sum_{w \in W} q^{l(w)}$ where $N$ is the number of positive roots and $l(\cdot)$ is the length on the Weyl group \cite[p.74]{carter85}. Hence $\frac{\lvert \mathsf{H} (\mathbb{F}_q) \rvert}{\lvert \mathsf{T} (\mathbb{F}_q) \rvert} \equiv \lvert W \rvert (\text{mod } \ell)$. It follows that the ratio is coprime to $\ell$. 
\end{proof}
	
Next, we introduce the concept of Hecke-trivial classes, which play a central role in this article.

\begin{definition}
\label{definition3.3}
\begin{enumerate}[label=(\roman*), align=left, leftmargin=0pt]
\item A class $x \in H^{\ast}\big(Y(\mathsf{K}_{f}),\mathds{k}\big)$ is \textit{Hecke-trivial} for a Hecke operator $T$ supported at a finite place $v$ if there exists a positive integer $n = n(x,T)$ so that \[\big(T - \text{deg}(T)\big)^{n}(x) = 0.\]
			
\item We call $x \in H^{\ast}\big(Y(\mathsf{K}_{f}),\mathds{k}\big)$ a \textit{Hecke-trivial} class if it is Hecke-trivial for all the Hecke operators supported at all places $v$ that are admissible for $(\mathsf{K}_{f}, \mathds{k})$. 
\end{enumerate}
\end{definition}

Let $H^{\ast}\big(Y(\mathsf{K}_{f}), \mathds{k}\big)_{\text{tr}}$ denote the collection of all Hecke trivial classes in the cohomology with coefficients in $\mathds{k}$. In general, it is challenging to determine the Hecke trivial submodule; therefore, we introduce an intermediate notion that is more convenient in the context of the derived Hecke algebra.  
	
\begin{definition}
\label{definition3.4}
\begin{enumerate}[label=(\roman*), align=left, leftmargin=0pt]
\item A cohomology class $x \in H^{\ast}\big(Y(\mathsf{K}_{f}), \mathds{k}\big)$ is \textit{super-trivial} at $g_v \in G_v$ if it satisfies $\widetilde{\pi}_{g_v}^{\ast}(x) = \pi_{g_v}^{\ast}(x)$.
			
\item We call $x \in H^{\ast}\big(Y(\mathsf{K}_{f}), \mathds{k}\big)$ a \textit{Hecke super-trivial} class if $x$ is super-trivial for all the elements supported at each admissible finite place for $(\mathsf{K}_f, \mathds{k})$. 
\end{enumerate}
\end{definition} 
	
If $x$ is a super-trivial class for $g_v$ then \[T_{g_v}(x) = \pi_{g_v!} \widetilde{\pi}^{\ast}_{g_v}(x) = \pi_{g_v!} \pi_{g_v}^{\ast}(x) = \text{deg}(T_{g_v})x.\]
In particular, each Hecke super-trivial class is already Hecke trivial. Roughly, the notion of super-triviality provides a nonarchimedean analog of the $\mathsf{G}_{\text{re}}$-invariant classes in the cohomology of the arithmetic manifold.

\subsection{Congruence classes}
\label{section3.2}
This subsection describes the congruence classes in the torsion cohomology of the arithmetic manifold that arise from the change of the level structure $\mathsf{K}_f$. Let $v$ be a fixed finite place of $F$. Suppose that $K_{v,1}$ is an open normal subgroup $K_{v}$ and $\mathsf{K}_1$ is the preimage of $K_{v,1}$ under the projection $\mathsf{K}_f \to K_{v}$. The finite group $K_v/K_{v,1}$ acts on $Y(\mathsf{K}_1)$ from right by $[g] \to [gg_v]$. This action realizes $K_v/K_{v,1}$ as the deck group for the Galois covering $Y(\mathsf{K}_1) \to Y(\mathsf{K}_f)$ and thus gives rise to a unique map in the homotopy category $Y(\mathsf{K}_f) \to B(K_v/K_{v,1})$ where $B(\cdot)$ refers to the classifying space of the finite group $K_v/K_{v,1}$. We pull back along this map to obtain a congruence class homomorphism $H^{\ast}(K_v/K_{v,1}, \mathds{k}) \to H^{\ast}\big(Y(\mathsf{K}_{f}), \mathds{k}\big)$. The congruence class maps attached to different open normal subgroups of $K_v$ are compatible with each other and give rise to a well-defined map 
\[\text{cong}_{v}: H^{\ast}(K_v, \mathds{k}) \to H^{\ast}\big(Y(\mathsf{K}_f), \mathds{k}\big)\]
where the cohomology group appearing in the domain is continuous cohomology with values in the discrete topological ring $\mathds{k}$. We refer to the cohomology classes in the image of $\text{cong}_v$ as the \textit{congruence classes} at $v$. The congruence class homomorphism is natural in $K_v$. In more detail, let $K'_{v}$ be an open subgroup of $K_{v}$ and $\pi: Y(\mathsf{K}'_f) \to Y(\mathsf{K}_f)$ be the covering attached to $K'_{v}$. Then the following diagram commutes: 
\begin{equation}
\label{3.1}
\begin{tikzcd}
H^{\ast}(K_v, \mathds{k}) \arrow[d, "\text{res}"]\arrow[r, "\text{cong}_v"] & H^{\ast}\big(Y(\mathsf{K}_f), \mathds{k}\big)\arrow[d, "\pi^{\ast}"]\\
H^{\ast}(K'_{v}, \mathds{k}) \arrow[r, "\text{cong}_v"] & H^{\ast}\big(Y(\mathsf{K}'_f), \mathds{k}\big). 
\end{tikzcd}
\end{equation}
A result of Venkatesh asserts that under a suitable hypothesis, the congruence classes are automatically Hecke super-trivial.   
	
\begin{lemma}
\label{lemma3.5}
Let $\alpha$ be a congruence class at $v$. Suppose that $w$ is another finite place of $F$. 
\begin{enumerate}[label = (\roman*), align=left, leftmargin=0pt]
\item If $v \neq w$ then $\alpha$ is a Hecke super trivial class for each $g_w \in G_w$. 
\item Now suppose the coefficient ring $\mathds{k}$ is a $\mathbb{Z}_{(\ell)}$-algebra for some $\ell$ so that $\ell \nmid \lvert W \rvert$. Moreover, assume that $v$ is good place for $\mathsf{K}_f$, $\mathsf{G}$ splits at $v$, and $q_v \equiv 1($\emph{mod} $\ell)$. Then $\alpha$ is a super-trivial class for each $g_v \in G_v$. 
\end{enumerate}  
\end{lemma}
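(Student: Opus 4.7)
The plan is to express both $\pi_{g_f}^{\ast}$ and $\widetilde{\pi}_{g_f}^{\ast}$ applied to a congruence class at $v$ as $\text{cong}_{v}$ composed with an explicit map on $H^{\ast}(K_v, \mathds{k})$, and then to check that the two resulting maps agree. Factor $\widetilde{\pi}_{g_f}$ as $Y(\mathsf{K}_{f,g_f}) \xrightarrow{\phi_{g_f}} Y(\mathsf{K}_{f}') \xrightarrow{\pi'} Y(\mathsf{K}_f)$, where $\mathsf{K}_f' := g_f^{-1}\mathsf{K}_{f, g_f} g_f \subseteq \mathsf{K}_f$, the homeomorphism $\phi_{g_f}$ is $[g] \mapsto [gg_f]$, and $\pi'$ is the natural covering. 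Since $\phi_{g_f}$ is right translation by $g_f$, it intertwines the deck group actions on the two sides through the conjugation isomorphism $K_{v, g_{f,v}} \xrightarrow{\sim} K_{v, g_{f,v}^{-1}}$, $k \mapsto g_{f,v}^{-1} k g_{f,v}$. Combining this with \eqref{3.1} applied to $\pi'$ yields
\[
\widetilde{\pi}_{g_f}^{\ast}\circ\text{cong}_{v} = \text{cong}_{v}\circ\iota_{2}^{\ast}, \qquad \pi_{g_f}^{\ast}\circ\text{cong}_{v} = \text{cong}_{v}\circ\iota_{1}^{\ast},
\]
where $\iota_{1}: K_{v, g_{f,v}} \hookrightarrow K_v$ is the tautological inclusion and $\iota_{2}: K_{v, g_{f,v}} \to K_v$ sends $k$ to $g_{f,v}^{-1} k g_{f,v}$. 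So super-triviality of $\alpha = \text{cong}_v(\beta)$ at $g_f$ reduces to $\iota_{1}^{\ast}(\beta) = \iota_{2}^{\ast}(\beta)$ in $H^{\ast}(K_{v, g_{f,v}}, \mathds{k})$. Part~(i) is then immediate: for $g_f = g_w$ with $w \neq v$, the $v$-component $g_{f,v}$ is trivial, so $\iota_1 = \iota_2 = \text{id}_{K_v}$.

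For part~(ii), note that $q_v \equiv 1 \pmod{\ell}$ forces $\ell$ to differ from the residue characteristic $p$ at $v$. Arguing as in the proof of Proposition~\ref{proposition3.1}, I will arrange $g_v = \chi(\omega_v)$ for a cocharacter $\chi$ of the split maximal torus of the split reductive model $\mathcal{G}/\mathcal{O}_v$, and set $\mathcal{M} = Z_{\mathcal{G}}(\chi)$. Since $g_v$ centralizes $\mathcal{M}$, the maps $\iota_1$ and $\iota_2$ already coincide on the subgroup $\mathcal{M}(\mathcal{O}_v) \subseteq K_{v, g_v}$; the plan is to propagate this coincidence to all of $K_{v, g_v}$ by passing to reductions modulo pro-$p$ subgroups. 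Inflation from the principal congruence subgroup $K_{v,1}$, which is pro-$p$, gives $H^{\ast}(K_v, \mathds{k}) \cong H^{\ast}(\mathcal{G}(\mathbb{F}_v), \mathds{k})$, and analogously $H^{\ast}(K_{v, g_v}, \mathds{k}) \cong H^{\ast}(\overline{K}_{v, g_v}, \mathds{k})$ where $\overline{K}_{v, g_v}$ denotes the image of $K_{v, g_v}$ in $\mathcal{G}(\mathbb{F}_v)$.

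From the parahoric description of $K_{v, g_v}$ attached to $\chi$, the group $\overline{K}_{v, g_v}$ coincides with a parabolic subgroup $\overline{P} \subseteq \mathcal{G}(\mathbb{F}_v)$ with Levi factor $\mathcal{M}(\mathbb{F}_v)$, and $\overline{K}_{v, g_v^{-1}}$ is the opposite parabolic $\overline{P}^{-}$. The unipotent radicals of $\overline{P}$ and $\overline{P}^{-}$ are $p$-groups, so for $\ell \neq p$ the splittings $\mathcal{M}(\mathbb{F}_v) \hookrightarrow \overline{P}, \overline{P}^{-}$ yield canonical isomorphisms identifying both $H^{\ast}(\overline{P}, \mathds{k})$ and $H^{\ast}(\overline{P}^{-}, \mathds{k})$ with $H^{\ast}(\mathcal{M}(\mathbb{F}_v), \mathds{k})$. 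The conjugation isomorphism $\overline{P} \xrightarrow{\sim} \overline{P}^{-}$ induced by $g_v^{-1}$ fixes $\mathcal{M}(\mathbb{F}_v)$ pointwise, hence acts as the identity on $H^{\ast}(\mathcal{M}(\mathbb{F}_v), \mathds{k})$. Under these identifications both $\iota_{1}^{\ast}(\beta)$ and $\iota_{2}^{\ast}(\beta)$ become $\text{res}^{\mathcal{G}(\mathbb{F}_v)}_{\mathcal{M}(\mathbb{F}_v)}(\beta)$, yielding the desired equality. The main obstacle will be verifying the structural identification $\overline{K}_{v, g_v} = \overline{P}$ with Levi $\mathcal{M}(\mathbb{F}_v)$ and the compatibility of $\iota_2$ with the pro-$p$ filtrations on $K_{v, g_v}$ and $K_{v, g_v^{-1}}$; both rest on careful bookkeeping with the Moy--Prasad/Iwahori data encoded by $\chi$.
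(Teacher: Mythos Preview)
Your reduction of super-triviality at $g_f$ to the equality $\iota_1^{\ast}(\beta)=\iota_2^{\ast}(\beta)$ in $H^{\ast}(K_{v,g_{f,v}},\mathds{k})$ is correct and is precisely the mechanism behind Venkatesh's Lemma~2.8, which is all the paper invokes. Part~(i) is then immediate, as you say.

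For part~(ii), your strategy of passing to the Levi $\mathcal{M}$ is the right one, and your final claim---that under the identification $H^{\ast}(K_{v,g_v},\mathds{k})\cong H^{\ast}(\mathcal{M}(\mathbb{F}_v),\mathds{k})$ both $\iota_1^{\ast}(\beta)$ and $\iota_2^{\ast}(\beta)$ become $\mathrm{res}^{\mathcal{G}(\mathbb{F}_v)}_{\mathcal{M}(\mathbb{F}_v)}(\beta)$---is true. However, the route you sketch through a ``conjugation isomorphism $\overline{P}\xrightarrow{\sim}\overline{P}^{-}$ induced by $g_v^{-1}$'' does not quite work as written: conjugation by $g_v^{-1}$ does \emph{not} carry $K_{v,g_v}\cap K_{v,1}$ into $K_{v,1}$ (e.g.\ already for $\mathrm{GL}_2$ with $g_v=\mathrm{diag}(\omega_v,1)$ a congruence element with upper-right entry in $\omega_v\mathcal{O}_v$ conjugates to one whose upper-right entry is a unit), so there is no induced map $\overline{P}\to\overline{P}^{-}$ on the quotients by the principal congruence subgroup. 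The obstacle you flag is therefore genuine for \emph{that} filtration.

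The clean fix is to bypass $\overline{P}$ altogether. The inclusion $\mathcal{M}(\mathcal{O}_v)\hookrightarrow K_{v,g_v}$ induces an isomorphism on $\mathds{k}$-cohomology (the kernel of $K_{v,g_v}\twoheadrightarrow\mathcal{M}(\mathbb{F}_v)$ is pro-$p$, and $\mathcal{M}(\mathcal{O}_v)\twoheadrightarrow\mathcal{M}(\mathbb{F}_v)$ has pro-$p$ kernel as well). Now simply restrict both $\iota_1^{\ast}(\beta)$ and $\iota_2^{\ast}(\beta)$ to $\mathcal{M}(\mathcal{O}_v)$: since $g_v=\chi(\omega_v)$ centralises $\mathcal{M}$, the composites $\mathcal{M}(\mathcal{O}_v)\hookrightarrow K_{v,g_v}\xrightarrow{\iota_1}K_v$ and $\mathcal{M}(\mathcal{O}_v)\hookrightarrow K_{v,g_v}\xrightarrow{\iota_2}K_v$ are literally the same inclusion, and injectivity of the restriction finishes. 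Equivalently, use the pro-$p$ \emph{radical} of the parahoric (which is intrinsic, hence preserved by the conjugation isomorphism $K_{v,g_v}\xrightarrow{\sim}K_{v,g_v^{-1}}$) rather than the principal congruence subgroup; the reductive quotient on both sides is then $\mathcal{M}(\mathbb{F}_v)$ and the induced map is the identity. With this adjustment your argument is complete, and in fact uses only $\ell\neq p$ rather than the full index hypothesis of Proposition~\ref{proposition3.1}; the paper's remark about that proposition reflects the way Venkatesh's original write-up was organised, not an essential feature of the proof.
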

	
\begin{proof}
This follows from the proof of Lemma~2.8 in \cite{venkatesh}. Readers may note that the argument given there overlooks the subtlety regarding the index (Proposition~\ref{proposition3.1}), which imposes further restriction for $v = w$. 
\end{proof}

\subsection{Derived Hecke algebra}
\label{section3.3}
\subsubsection{Local derived Hecke algebra}
\label{section3.3.1}
Let $\mathds{k}$ be a \textit{finite} coefficient ring and $v$ be a finite place so that the residue characteristic at $v$ is a unit in $\mathds{k}$. We write $\mathcal{H}(G_v,K_v)_{\mathds{k}}$ for the derived Hecke algebra \cite[2]{venkatesh} attached to $(G_v, K_v)$ with the coefficients in $\mathds{k}$. The derived Hecke algebra is generated by derived Hecke operators of the form $h_{z,\alpha}$ where $z = K_vg_zK_v\in K_v \backslash G_v /K_v$ and $\alpha$ is a cohomology class in $ H^{\ast}\big(K_{v,g_z}, \mathds{k}\big)$. Suppose that $\langle \alpha \rangle$ is the image of $\alpha$ under the congruence class map $H^{\ast}\big(K_{v,g_z}, \mathds{k}\big) \to H^{\ast}\big(Y(\mathsf{K}_{f,g_z}), \mathds{k}\big)$. Then the action of $h_{z, \alpha}$ on $H^{\ast}\big(Y(\mathsf{K}_{f}), \mathds{k}\big)$ equals the following composite:
\begin{equation} 
\label{3.2}
\begin{tikzcd}[row sep= large]
H^{\ast}\big(Y(\mathsf{K}_{f}), \mathds{k}\big) \arrow[r, "\widetilde{\pi}_{g_z}^{\ast}"]\arrow[drr,  dashed, "h_{z, \alpha}"] & H^{\ast}\big(Y(\mathsf{K}_{f, g_z}), \mathds{k}\big) \arrow[r, "-\cup \langle \alpha \rangle"] & H^{\ast}\big(Y(\mathsf{K}_{f,g_z}), \mathds{k}\big)\arrow[d,"\pi_{g_z!}"]\\
& & H^{\ast}\big(Y(\mathsf{K}_{f}), \mathds{k}\big). 
\end{tikzcd}
\end{equation}
In particular, if $\alpha$ is the unit class $\mathds{1}$ then $h_{z,\alpha}$ is the same as $T_{g_z}$. Letting $z$ be the double coset of identity, we see that $h_{z,\alpha}$ acts via $-\cup \langle \alpha \rangle$ on the cohomology of $Y(\mathsf{K}_{f})$. Thus, the endomorphism algebra generated by the derived Hecke operators at $v$ contains the linear operators defined by the cup product with the congruence classes at $v$. 
	
Next, we establish a few basic properties for the derived Hecke operators, which play a role in our work. Our discussion relies on certain functorial properties of the push-forward map attached to a finite covering.  
	
\begin{lemma}
\label{lemma3.6}\emph{(cf. \cite{piacenza})} Let $\pi: E \to X$ be a finite covering map of topological spaces and let $\pi_{!}$ denote the corresponding transfer map in singular cohomology. 
\begin{enumerate}[label = (\roman*), align=left, leftmargin=0pt]
\item Transfer commutes with pullbacks, i.e., if
\begin{equation*}
\begin{tikzcd}
Y \times_{X} E \arrow[r, "\tilde{f}"]\arrow[d, "\tilde{\pi}"] \arrow[dr, phantom, "\ulcorner", very near start] & E \arrow[d, "\pi"]\\
Y \arrow[r, "f"] & X 
\end{tikzcd}
\end{equation*} 
is a pullback of $\pi$ along a continuous map $Y \xrightarrow{f} X$ then $\tilde{\pi}_{!}\circ \tilde{f}^{\ast} = f^{\ast} \circ \pi_{!}$.
\item (Projection formula) For each $x \in H^{\ast}(X, \mathds{k})$ and $y \in H^{\ast}(E, \mathds{k})$ we have 
\[\pi_{!}(\pi^{\ast}(x) \cup y) = x \cup \pi_{!}(y).\]  
\end{enumerate}
\end{lemma}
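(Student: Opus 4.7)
The plan is to work at the cochain level with the explicit description of the transfer map. For a finite covering $\pi\colon E \to X$ and a singular $k$-cochain $\phi \in C^{k}(E, \mathds{k})$, one defines
\[
\pi_{!}(\phi)(\sigma) \;=\; \sum_{\tilde{\sigma}} \phi(\tilde{\sigma}),
\]
where $\sigma\colon \Delta^{k} \to X$ is a singular simplex and the sum runs over all continuous lifts $\tilde{\sigma}\colon \Delta^{k} \to E$ with $\pi \tilde{\sigma} = \sigma$. Because $\Delta^{k}$ is simply connected, the set of such lifts is finite and in natural bijection with $\pi^{-1}(\sigma(v_{0}))$, where $v_{0}$ is any vertex of $\Delta^{k}$. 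A routine check (found in Piacenza) shows that $\pi_{!}$ is a chain map and induces the transfer on cohomology. Both statements will now follow by comparing the two sides on an arbitrary simplex $\sigma$.

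For (i), consider the pullback square. The universal property identifies lifts $\tilde{\sigma}\colon \Delta^{k} \to Y \times_{X} E$ of a simplex $\sigma\colon \Delta^{k} \to Y$ with pairs $(\sigma, \tau)$, where $\tau\colon \Delta^{k} \to E$ lifts $f \circ \sigma$; under this bijection $\tilde{\sigma} \mapsto \tilde{f} \circ \tilde{\sigma} = \tau$. Hence
\[
(\tilde{\pi}_{!}\tilde{f}^{\ast}\phi)(\sigma) \;=\; \sum_{\tilde{\sigma}} \phi(\tilde{f}\tilde{\sigma}) \;=\; \sum_{\tau} \phi(\tau) \;=\; \pi_{!}(\phi)(f\sigma) \;=\; (f^{\ast}\pi_{!}\phi)(\sigma),
\]
which proves the base change formula.

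For (ii), let $x \in C^{p}(X,\mathds{k})$, $y \in C^{q}(E,\mathds{k})$, and $\sigma\colon \Delta^{p+q} \to X$. Write $\sigma_{\text{fr}}$ and $\sigma_{\text{bk}}$ for the front $p$-face and back $q$-face respectively. For any lift $\tilde{\sigma}$ of $\sigma$, the corresponding front face $\tilde{\sigma}_{\text{fr}}$ is a lift of $\sigma_{\text{fr}}$, so
\[
(\pi^{\ast}x)(\tilde{\sigma}_{\text{fr}}) \;=\; x(\pi \tilde{\sigma}_{\text{fr}}) \;=\; x(\sigma_{\text{fr}})
\]
is independent of the choice of lift. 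By unique lifting of the vertex $v_{0}$, as $\tilde{\sigma}$ ranges over lifts of $\sigma$, the back face $\tilde{\sigma}_{\text{bk}}$ ranges precisely over the lifts of $\sigma_{\text{bk}}$. Therefore
\[
\pi_{!}(\pi^{\ast}x \cup y)(\sigma) \;=\; \sum_{\tilde{\sigma}} x(\sigma_{\text{fr}})\, y(\tilde{\sigma}_{\text{bk}}) \;=\; x(\sigma_{\text{fr}}) \sum_{\tilde{\sigma}_{\text{bk}}} y(\tilde{\sigma}_{\text{bk}}) \;=\; \bigl(x \cup \pi_{!}(y)\bigr)(\sigma).
\]

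The main technical obstacle is the verification that the cochain formula for $\pi_{!}$ is well defined and commutes with the coboundary, which is exactly the content of Piacenza's treatment; once this is granted, the two identities reduce to the elementary combinatorial bijections above. An entirely analogous cochain computation (which we do not repeat) also establishes functoriality of $\pi_{!}$ under composition of coverings, a fact we will invoke tacitly in the sequel.
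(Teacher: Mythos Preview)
Your proof is correct and follows essentially the same approach as the paper's. The paper's argument is terser: it defines the transfer at the level of singular chains as the sum-of-lifts map, declares (i) immediate from the definitions, and attributes (ii) to the Alexander-Whitney diagonal approximation---which is exactly the front-face/back-face computation you spell out.
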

	
\begin{proof}
At the level of singular chains, the transfer is defined by assigning a singular simplex on $X$, the sum of the finitely many lifts to $E$. For this construction of transfer, the first property is an immediate consequence of the definitions, while the second one follows from the Alexander-Whitney diagonal approximation.   
\end{proof}
	
\begin{corollary}
\label{corollary3.7}
The congruence class map commutes with transfer. In other words, with the notation of \eqref{3.1} the following diagram commutes:   
		
\begin{equation*}
\begin{tikzcd}
H^{\ast}(K_{v}', \mathds{k}) \arrow[r, "\emph{cong}_v"]\arrow[d, "\emph{cores}"] & H^{\ast}\big(Y(\mathsf{K}'_{f}), \mathds{k}\big)\arrow[d, "\pi_{!}"]\\
H^{\ast}(K_{v}, \mathds{k}) \arrow[r, "\emph{cong}_v"] & H^{\ast}\big(Y(\mathsf{K}_{f}), \mathds{k}\big).  
\end{tikzcd}
\end{equation*}
\end{corollary}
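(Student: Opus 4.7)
The plan is to reduce the statement to a single pullback square of finite covers and then apply Lemma~\ref{lemma3.6}(i). Since the continuous cohomology $H^{*}(K_v,\mathds{k})$ (and likewise $H^{*}(K'_v,\mathds{k})$) is a colimit over open normal subgroups and the congruence class map is assembled from its finite-level analogues, it suffices to verify commutativity after fixing an open normal subgroup $K_{v,1}\trianglelefteq K_v$ which is small enough that $K_{v,1}\subseteq K'_v$; then $K_{v,1}$ is automatically normal in $K'_v$ as well. Let $\mathsf{K}_1\subseteq \mathsf{K}'_f\subseteq \mathsf{K}_f$ denote the corresponding levels, so that $Y(\mathsf{K}_1)\to Y(\mathsf{K}_f)$ is Galois with deck group $K_v/K_{v,1}$ and $Y(\mathsf{K}_1)\to Y(\mathsf{K}'_f)$ is Galois with deck group $K'_v/K_{v,1}$.

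The key geometric input is the following pullback square:
\begin{equation*}
\begin{tikzcd}
Y(\mathsf{K}'_f) \arrow[r, "\phi'"]\arrow[d, "\pi"] \arrow[dr, phantom, "\ulcorner", very near start] & B(K'_v/K_{v,1}) \arrow[d, "j"] \\
Y(\mathsf{K}_f) \arrow[r, "\phi"] & B(K_v/K_{v,1})
\end{tikzcd}
\end{equation*}
Here $\phi$ is the classifying map of the principal $(K_v/K_{v,1})$-bundle $Y(\mathsf{K}_1)\to Y(\mathsf{K}_f)$, and $j$ is the finite covering induced by the subgroup inclusion, which has fiber $(K_v/K_{v,1})/(K'_v/K_{v,1})$. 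To see that this is Cartesian, observe that the pullback of $j$ along $\phi$ is canonically the quotient of the total space $Y(\mathsf{K}_1)$ by the subgroup $K'_v/K_{v,1}$, which is exactly $Y(\mathsf{K}'_f)$, and the induced map to $B(K'_v/K_{v,1})$ classifies the Galois cover $Y(\mathsf{K}_1)\to Y(\mathsf{K}'_f)$ and hence agrees with $\phi'$. By construction, $\phi^{*}$ and $(\phi')^{*}$ realize the finite-level congruence class maps, and the standard identification of the corestriction in group cohomology with the transfer $j_{!}$ (valid for the finite covering $j$ of classifying spaces) identifies the left-hand column of the target diagram with the transfer $j_{!}$ at the level of $B(K_v/K_{v,1})$.

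Granted this identification, Lemma~\ref{lemma3.6}(i) applied to the pullback square above gives $\pi_{!}\circ (\phi')^{*} = \phi^{*}\circ j_{!}$, which is precisely the asserted commutativity at finite level. Passing to the colimit over open normal subgroups $K_{v,1}\subseteq K'_v$ yields the statement for continuous cohomology. The main point requiring care is the verification that the square above is genuinely Cartesian and that the resulting top arrow is the correct classifying map; this is essentially bookkeeping once one recalls that $BH\to BG$ for $H\le G$ finite is modelled by $EG/H\to EG/G$ and pullback along a classifying map simply quotients the total space by $H$.
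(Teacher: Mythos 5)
Your proof is correct and takes essentially the same route as the paper: fix a sufficiently small open normal subgroup $K_{v,1}\subseteq K'_v$, form the Cartesian square of finite covers with the two classifying maps and the covering $B(K'_v/K_{v,1})\to B(K_v/K_{v,1})$, identify corestriction with transfer on classifying spaces, and invoke Lemma~\ref{lemma3.6}(i). The paper phrases the Cartesian verification via the model $B(K'_v/K_{v,1})=\text{pt}\otimes_{K'_v/K_{v,1}} E(K_v/K_{v,1})$, which is the same observation you make about $Y(\mathsf{K}'_f)$ being the quotient of $Y(\mathsf{K}_1)$ by $K'_v/K_{v,1}$.
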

	
\begin{proof}
Let $K_{v,1}$ be an open normal subgroup of $K_v$ contained in $K_v'$ and $\mathsf{K}_{1}$ be the level attached to $K_{v,1}$. We consider the commutative diagram 
\begin{equation*}
\begin{tikzcd}
Y(\mathsf{K}_f') \arrow[r, ""]\arrow[d, "\pi"] & B(K_v'/K_{v,1})\arrow[d]\\
				Y(\mathsf{K}_f) \arrow[r, ""] & B(K_v/K_{v,1})
\end{tikzcd}
\end{equation*}
where the horizontal arrows arise from congruence covers and the right vertical arrow arises from the inclusion $K_v'/K_{v,1} \hookrightarrow K_v/K_{v,1}$. Here one realizes $B(K_v'/K_{v,1})$ as $\text{pt} \otimes_{K_v'/K_{v,1}} E(K_v/K_{v,1})$ so that the natural map induced by inclusion stems from the projection $E(K_v/K_{v,1}) \to B(K_v/K_{v,1})$ \cite[p.70]{adem-milgram}. In particular, the right vertical arrow of the diagram above is a covering map. Moreover, the diagram is a pullback diagram for the congruence class map at the bottom of the diagram. For this one needs to note that the pullback of $E(K_v/K_{v,1})$ equals $Y(\mathsf{K}_1)$ and $Y(\mathsf{K}'_f)$ is a quotient of $Y(\mathsf{K}_1)$ by $K_v'/K_{v,1}$. Thus, the assertion is a consequence of the first property of transfer in Lemma~\ref{lemma3.6}.  
\end{proof}
	
We are ready to state our results regarding the derived Hecke operators. 
	
\begin{proposition}
\label{proposition3.8}
Let the notation be as at the beginning of this subsection. 
\begin{enumerate}[label=(\roman*), align=left, leftmargin=0pt]
\item Suppose that $\mathds{1}$ is the unit class in the cohomology of $Y(\mathsf{K}_f)$ and $h_{z,\alpha}$ is a derived Hecke operator at $v$. Then $h_{z,\alpha}(\mathds{1})$ is a congruence class at $v$. 
\item Now suppose $x_0$ is a super-trivial class for $g_z \in G_v$. Then \[h_{z,\alpha} (x_0 \cup x) =  x_0 \cup h_{z,\alpha}(x)\] 
for each $x \in H^{\ast}\big(Y(\mathsf{K}_{f}),\mathds{k}\big)$. In particular, $h_{z,\alpha} (x_0) = x_0 \cup h_{z,\alpha}(\mathds{1})$. 
\end{enumerate}
\end{proposition}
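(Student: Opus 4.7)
The plan is to unfold the definition \eqref{3.2} of $h_{z,\alpha}$ and invoke two ingredients already established: Corollary~\ref{corollary3.7} (compatibility of the congruence class map with transfer) for part (i), and the projection formula from Lemma~\ref{lemma3.6}(ii) together with the super-triviality hypothesis for part (ii). Both parts reduce to short manipulations once the defining composite \eqref{3.2} is opened up.

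For part (i), I would substitute $x = \mathds{1}$ into \eqref{3.2}. Since $\widetilde{\pi}_{g_z}^{\ast}$ is a unital ring map, $\widetilde{\pi}_{g_z}^{\ast}(\mathds{1}) = \mathds{1}$, so the middle step (cup product with $\langle \alpha \rangle$) simply returns $\langle \alpha \rangle$, giving
$$h_{z,\alpha}(\mathds{1}) \;=\; \pi_{g_z!}(\langle \alpha \rangle).$$
By construction $\langle \alpha \rangle = \text{cong}_v(\alpha) \in H^{\ast}\bigl(Y(\mathsf{K}_{f,g_z}),\mathds{k}\bigr)$, and the covering $\pi_{g_z}\colon Y(\mathsf{K}_{f,g_z}) \to Y(\mathsf{K}_f)$ is exactly the covering associated to the inclusion of local subgroups $K_{v,g_z} \subseteq K_v$, with the other local components of $\mathsf{K}_f$ unchanged. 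Corollary~\ref{corollary3.7} therefore applies verbatim and yields
$$h_{z,\alpha}(\mathds{1}) \;=\; \pi_{g_z!}\bigl(\text{cong}_v(\alpha)\bigr) \;=\; \text{cong}_v\bigl(\text{cores}(\alpha)\bigr),$$
which is by definition a congruence class at $v$ in the cohomology of $Y(\mathsf{K}_f)$.

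For part (ii), I would expand using multiplicativity of pullback as
$$h_{z,\alpha}(x_0 \cup x) \;=\; \pi_{g_z!}\bigl(\widetilde{\pi}_{g_z}^{\ast}(x_0) \cup \widetilde{\pi}_{g_z}^{\ast}(x) \cup \langle \alpha \rangle\bigr),$$
then use super-triviality of $x_0$ at $g_z$ to replace $\widetilde{\pi}_{g_z}^{\ast}(x_0)$ by $\pi_{g_z}^{\ast}(x_0)$, and finally apply the projection formula (Lemma~\ref{lemma3.6}(ii)) with $y = \widetilde{\pi}_{g_z}^{\ast}(x) \cup \langle \alpha \rangle$ to extract $x_0$ from the transfer, producing $x_0 \cup h_{z,\alpha}(x)$. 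The particular assertion $h_{z,\alpha}(x_0) = x_0 \cup h_{z,\alpha}(\mathds{1})$ is then the case $x = \mathds{1}$. The whole argument is essentially formal; the only step requiring genuine care is the identification in part (i) of the covering appearing in Corollary~\ref{corollary3.7} with the Hecke covering $\pi_{g_z}$ of \eqref{3.2}, which I expect to be bookkeeping rather than a real obstacle.
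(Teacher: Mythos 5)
Your proposal is correct and follows the paper's own argument essentially verbatim: part (i) substitutes $\mathds{1}$ into \eqref{3.2} to get $h_{z,\alpha}(\mathds{1}) = \pi_{g_z!}(\langle \alpha \rangle)$ and then invokes Corollary~\ref{corollary3.7}, while part (ii) expands $h_{z,\alpha}(x_0\cup x)$, replaces $\widetilde{\pi}_{g_z}^{\ast}(x_0)$ by $\pi_{g_z}^{\ast}(x_0)$ via super-triviality, and closes with the projection formula of Lemma~\ref{lemma3.6}. No meaningful difference from the paper.
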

	
\begin{proof}
\begin{enumerate}[label=(\roman*), align=left, leftmargin=0pt]
\item A straight-forward computation using \eqref{3.2} shows that $h_{z, \alpha}(\mathds{1}) = \pi_{g_z!}(\langle \alpha \rangle)$. Now, the assertion is a consequence of Corollary~\ref{corollary3.7}.  
\item We have 
\begin{equation*}
\begin{aligned}
h_{z,\alpha}(x_0 \cup x) & = \pi_{g_z!}\big(\widetilde{\pi}_{g_z}^{\ast}(x_0) \cup \widetilde{\pi}_{g_z}^{\ast}(x) \cup \langle \alpha \rangle\big)\\
& = \pi_{g_z!}\big(\pi_{g_z}^{\ast}(x_0) \cup \widetilde{\pi}_{g_z}^{\ast}(x) \cup \langle \alpha \rangle\big)\\
& = x_0 \cup h_{z, \alpha}(x)
\end{aligned}
\end{equation*}
where the last step follows from the projection formula in Lemma~\ref{lemma3.6}. 
\end{enumerate}       
\end{proof}

\subsubsection{Global derived Hecke algebra}
\label{section3.3.2}
We end the discussion on derived Hecke operators by recalling the basic formalism concerning the global derived Hecke algebra \cite[2.13]{venkatesh}. Let $\ell$ be a rational prime and $n$ be a positive integer. Suppose that \[\mathbb{T}_{\ell, n} \subseteq \text{End}\big(H^{\ast}(Y\big(\mathsf{K}_{f}), \mathbb{Z}/\ell^n\big)\big)\] is the algebra of endomorphisms generated by all the derived Hecke algebras $\mathcal{H}(G_v, K_v)_{\mathbb{Z}/\ell^n}$ where $v$ ranges over admissible primes for $(\mathsf{K}_{f}, \mathbb{Z}/\ell^n)$. Note that the integral homology $H_{\ast}\big(Y(\mathsf{K}_{f}), \mathbb{Z}\big)$ is a finitely generated abelian group and $H^{\ast}\big(Y(\mathsf{K}_{f}), \mathbb{Z}_{\ell}\big) = \varprojlim_{n} H^{\ast}\big(Y(\mathsf{K}_{f}), \mathbb{Z}/\ell^n\big)$. We define the $\ell$-adic \textit{global derived Hecke algebra} to be the subcollection \[\mathbb{T}_{\ell} \subseteq \text{End}\big(H^{\ast}\big(Y(\mathsf{K}_{f}), \mathbb{Z}_{\ell}\big)\big)\] consisting of the elements of the form $\varprojlim t_n$ for some compatible system $(t_n)_{n \geq 1}$ with $t_n \in \mathbb{T}_{\ell, n}$. For practical purposes, one often needs to restrict the primes and the powers of $\ell$ in the above construction. 
	
Let $V: \mathcal{M}_F^{\text{fin}} \to \{0, 1, 2, \ldots\} \cup \{\infty\}$
be a set theoretic function that sends non-admissible places for $(\mathsf{K}_f, \mathbb{Z}/\ell)$ to zero. Set $\mathbb{T}_{\ell,n}^{(V)}$ to the subalgebra of $\mathbb{T}_{\ell,n}$ generated by $\mathcal{H}(G_v,K_v)_{\mathbb{Z}/\ell^n}$ where $v$ satisfies $V(v) \geq n$. Then we obtain a \textit{restricted} global derived Hecke algebra $\mathbb{T}_{\ell}^{(V)}$ by considering compatible systems of the form $(t_n)_{n \geq 1}$ with $t_n \in \mathbb{T}_{\ell,n}^{(V)}$. The most significant example of $V$ for our purpose is 
\[V_0(v) = \begin{cases}
		0, & \text{$v$ not admissible}\\
		\text{largest power of $\ell$ dividing $q_v - 1$}, & \text{$v$ admissible}. 
	\end{cases}\]  
The extra congruence condition $q_v \equiv 1($mod $\ell^n)$ originates from the Taylor-Wiles theory and simplifies lots of technical points (e.g., Proposition~\ref{proposition3.1}) in our 
calculations, as explained in the introduction. One can enlarge the restricted derived Hecke algebra for $V_0$ by adding all classical Hecke operators to define the \textit{strict} global derived Hecke algebra:     
\[\mathbb{T}'_{\ell} := \substack{\text{algebra generated by $\mathbb{T}_{\ell}^{(V_0)}$ and all underived}\\ \text{Hecke operators at admissible places.}}\]
Let $v$ be an admissible place for the pair $(\mathsf{K}_f, \mathbb{Z}_{\ell})$ so that $\mathsf{G}$ splits at $v$. Assume that $\ell$ does not divide the size of the Weyl group of $\mathsf{G}$. Then the Satake isomorphism shows that the usual Hecke algebra attached to $(G_v, K_v)$ with coefficients in $\mathbb{Z}/\ell^n$ is commutative. Moreover, the derived Hecke algebra $\mathcal{H}(G_v,K_v)_{\mathbb{Z}/\ell^n}$ is graded commutative if $q_v \equiv 1(\text{mod }\ell^n)$. In the example of division algebra, the additional split at $v$ condition is automatic, i.e., $\mathsf{G} = \text{SL}_1(\mathsf{D})$ is quasi-split at a finite place $v$ if and only if it splits at $v$. Therefore, the strict global derived Hecke algebra $\mathbb{T}'_{\ell}$ is graded commutative whenever $\ell$ is coprime to the size of the Weyl group of $\mathsf{G}$. 
	
\subsection{Hecke trivial cohomology of division algebras} 
\label{section3.4}
Let $\mathsf{D}$ be a centrally simple division algebra having degree $d^2$ over a number field $F$ and set $\mathsf{G} = \text{SL}_1(\mathsf{D})$ as in the introduction. Note that $\mathsf{G}$ is an anisotropic form over $F$ of the split algebraic group $\text{SL}_d$. In particular 
\[\mathsf{G}(F_{\sigma}) = \begin{cases}
		\text{SL}_{d}(\mathbb{C}), & \text{if $\sigma \in \mathcal{M}_{F}^{\text{im}}$;}\\
		\text{SL}_d(\mathbb{R}), & \text{if $\sigma \in \mathcal{M}_{F}^{\text{re}}$ and $\mathsf{D}$ splits at $\sigma$;}\\
		\text{SL}_{\frac{d}{2}}(\mathbb{H}), & \text{if $\sigma \in \mathcal{M}_{F}^{\text{re}}$ and $\mathsf{D}$ is ramified at $\sigma$.}
	\end{cases}\] 
Let $\mathcal{M}^{\text{ur}}_{F}$, resp. $\mathcal{M}^{\text{ram}}_{F}$, denote the collection of all real places where $\mathsf{D}$ splits, resp. ramifies. Note that $\mathcal{M}^{\text{ram}}_{F}$ is empty if $d$ is odd. For each $\sigma \in \mathcal{M}_{F}^{\infty}$ we define a maximal compact subgroup $K_{\sigma}$ of $\mathsf{G}(F_{\sigma})$ using the standard Hermitian inner products on $F_{\sigma}^{d}$ and $\mathbb{H}_{\sigma}^{\frac{d}{2}}$, i.e.,  
\begin{equation*}
K_{\sigma} = \begin{cases}
			\text{SO}_{d}, & \text{$\sigma \in \mathcal{M}_F^{\text{ur}}$;}\\
			\text{Sp}_{\frac{d}{2}}, & \text{$\sigma \in \mathcal{M}_F^{\text{ram}}$;}\\
			\text{SU}_{d}, & \text{$\sigma \in \mathcal{M}_F^{\text{im}}$.}
		\end{cases}
\end{equation*}
Then $\mathsf{K} := \prod_{\sigma \in \mathcal{M}_{F}^{\infty}} K_{\sigma}$ is a maximal connected compact subgroup of $\mathsf{G}_{\text{re}} = \prod_{\sigma \in \mathcal{M}_{F}^{\infty}} \mathsf{G}(F_{\sigma})$. Thus the symmetric space $\mathsf{X} = \mathsf{G}_{\text{re}}/\mathsf{K}$ is a smooth real manifold of dimension 
\[d(F) := \big(\frac{d(d-1)}{2} - 1\big)\lvert \mathcal{M}_{F}^{\text{re}} \rvert + (d^2 -1)\lvert \mathcal{M}_{F}^{\text{im}} \rvert + d\lvert \mathcal{M}_{F}^{\text{ur}} \rvert.\]
Note that our algebraic group $\mathsf{G}$ is quasi-split at a place $v$ if and only if the division algebra $\mathsf{D}$ splits at $v$. In particular, $\mathsf{G}$ splits at each good prime for $\mathsf{K}_{f}$. Now suppose $\mathcal{O}_\mathsf{D}$ is a fixed maximal order contained in $\mathsf{D}$. For convenience, we additionally assume that $K_v$ is contained in $\mathcal{O}_{\mathsf{D},v}^{\times}$ for each $v \in \mathcal{M}_{F}^{\text{fin}}$. If $v$ is a good place, then there is an isomorphism $\mathsf{D} \otimes_{F} F_v \cong M_d(F_v)$ that maps $\mathcal{O}_{\mathsf{D},v}^{\times}$, resp. $K_v$, onto $\text{GL}_d(\mathcal{O}_{F_v})$, resp. $\text{SL}_d(\mathcal{O}_{F_v})$. Since $\mathsf{G}$ is anisotropic, the arithmetic manifold $Y(\mathsf{K}_f)$ is compact. Moreover, the strong approximation theorem \cite[7.4]{platonov} applies to $\mathsf{G}$ and we have $\mathsf{G}(\mathbb{A}_{F,f}) = \mathsf{G}(F)\mathsf{K}_{f}$. Therefore 
\[\text{$Y(\mathsf{K}_{f}) = \Gamma_{\mathsf{K}_{f}} \backslash \mathsf{X}$ where $\Gamma_{\mathsf{K}_{f}} = \mathsf{G}(F) \cap \mathsf{K}_{f}$.}\] In particular $Y(\mathsf{K}_f)$ is connected. Note that if $d = 1$ then $Y(\mathsf{K}_f)$ is a point and the results announced in Section~\ref{section1} are tautologically true. During the arguments in the latter sections, we always assume $d \geq 2$ to avoid degenerate cases. The rest of this subsection aims to describe the Hecke-trivial submodule attached to the cohomology of $Y(\mathsf{K}_f)$. 
	
We first briefly describe the action of adelic Hecke operators on the classical model. Let $T_{g_v} = [K_{v}g_{v}K_{v}]$ be a double coset operator supported at a good place $v$. Set $\mathsf{K}_{f, g_v} = \mathsf{K}_{f} \cap g_v \mathsf{K}_{f} g_v^{-1}$ and $\Gamma_{g_v} = \mathsf{K}_{f, g_v} \cap \mathsf{G}(F)$.
Since $\mathsf{G}(F)$ is dense in $\mathsf{G}(\mathbb{A}_{F,f})$ there exists $\gamma_{g_v} \in \mathsf{G}(F)$ so that $\gamma_{g_v} \in \mathsf{K}_{f}g_v \cap g_v\mathsf{K}_{f}$. Note that $\Gamma_{g_v} = \Gamma_{\mathsf{K}_{f}} \cap \gamma_{g_v} \Gamma_{\mathsf{K}_f} \gamma_{g_v}^{-1}$. Moreover the covering map $Y(\Gamma_{g_v}) \to Y(\mathsf{K}_f)$ induced by the twisted inclusion $[x] \to [\gamma_{g_v}^{-1}x]$ coincides with the adelic map arising from the twisted inclusion of $\mathsf{K}_{g_v}$ into $\mathsf{K}_f$. Therefore it is possible the encode the action of $T_{g_v}$ using the $\mathsf{G}(F)$-Hecke operators on $Y(\mathsf{K}_{f})$. The Hodge theory of compact Riemannian manifolds implies that the cohomology classes in $H^{\ast}\big(Y(\mathsf{K}_{f}), \mathbb{C}\big)$ admit unique harmonic representatives. Note that the space of harmonic forms on $Y(\mathsf{K}_{f})$ posseses a $\mathsf{G}(F)$-invariant inner product arising from the canonical $\mathsf{G}_{\text{re}}$-invariant metric on $\mathsf{X}$. The adjoint of $T_{g_v}$ with respect the invariant inner product equals $T_{g_v^{-1}}$. But by the Satake isomorphism, the Hecke algebra is commutative at a good place. It follows that the Hecke operators at a good place define normal endomorphisms of $H^{\ast}\big(Y(\mathsf{K}_{f}), \mathbb{C}\big)$. In particular, such Hecke operators are semisimple.  As a consequence, all Hecke operators $T$ supported at a good place $v$ acts on $H^{\ast}\big(Y(\mathsf{K}_f), \mathbb{C}\big)_{\text{tr}}$ as multiplication by $\text{deg}(T)$. 
	
The theory of $(\mathfrak{g}, K)$-modules yields a description of $H^{\ast}\big(Y(\mathsf{K}_{f}), \mathbb{C}\big)$ as the cohomology of the complex $C^{\ast}(\mathfrak{g}, \mathsf{K}, \mathcal{C}^{\infty}(\Gamma_{\mathsf{K}_{f}}\backslash \mathsf{G}_{\text{re}})\big)$ where $\mathfrak{g}$ is the real Lie algebra of the Lie group $\mathsf{G}_{\text{re}}$ and $\mathcal{C}^{\infty}(\cdot)$ is the collection of $\mathbb{C}$-valued smooth functions \cite[Ch.VII]{borel-wallach}. Observe that $\Gamma_{\mathsf{K}_{f}}$ is a cocompact subgroup of $\mathsf{G}_{\text{re}}$. Thus, the inclusion of the constant functions $\mathbb{C} \hookrightarrow \mathcal{C}^{\infty}(\Gamma_{\mathsf{K}_{f}} \backslash \mathsf{G}_{\text{re}})$ yields an injective map 
\begin{equation}
\label{3.3}
H^{\ast}(\mathfrak{g}, \mathsf{K}, \mathbb{C}) \hookrightarrow H^{\ast}(\mathfrak{g}, \mathsf{K}, \mathcal{C}^{\infty}(\Gamma_{\mathsf{K}_{f}} \backslash \mathsf{G}_{\text{re}})) = H^{\ast}\big(Y(\mathsf{K}_{f}), \mathbb{C}\big)
\end{equation}
and its image is the contribution of the trivial representation to the cohomology of $Y(\mathsf{K}_f)$. Let $\Omega_{\mathsf{X}}^{\mathsf{G}_{\text{re}}}$ be the space of $\mathsf{G}_{\text{re}}$-invariant differential forms on $\Omega_{\mathsf{X}}$. There is a canonical identification $\Omega_{\mathsf{X}}^{\mathsf{G}_{\text{re}}} \cong H^{\ast}(\mathfrak{g}, \mathsf{K}, \mathbb{C})$ obtained by evaluating a differential form at the coset of identity. Hence, each element in the image of \eqref{3.3} admits a $\mathsf{G}_{\text{re}}$-invariant differential form representative. As a consequence, each element in the image is a Hecke super-trivial class in the sense of Definition~\ref{definition3.4}. In particular, the image of \eqref{3.3} lies inside $H^{\ast}\big(Y(\mathsf{K}_{f}), \mathbb{C}\big)_{\text{tr}}$. Now, a standard argument using the strong approximation theorem verifies that the image of \eqref{3.3} equals the trivial subspace $H^{\ast}\big(Y(\mathsf{K}_{f}), \mathbb{C}\big)_{\text{tr}}$ \cite[1.5]{venkataramana}.
	
Let $\text{CT}(\mathsf{G}_{\text{re}})$ denote the \textit{compact twin} or \textit{compact dual} attached to the real lie group $\mathsf{G}_\text{re}$. Then \eqref{3.3} above coincides with the Borel map  
\begin{equation}
\label{3.4}
\text{Bo}_{\text{ct}, \Gamma_{\mathsf{K}_{f}}}^{\ast}: H^{\ast}\big(\text{CT}(\mathsf{G}_{\text{re}}), \mathbb{C}\big) \cong H^{\ast}(\mathfrak{g}, K, \mathbb{C}) \to H^{\ast}\big(Y(\mathsf{K}_{f}), \mathbb{C}\big)
\end{equation}
attached to the algebraic group $\text{Res}^{F}_{\mathbb{Q}}\mathsf{G}$ (Appendix~\ref{appendixA.1}). Recall that the Borel map is an algebra homomorphism. Therefore, its image, namely $H^{\ast}\big(Y(\mathsf{K}_f), \mathbb{C}\big)_{\text{tr}}$, is a subalgebra of the full cohomology algebra. The compact twin of the Lie group $\mathsf{G}_{\text{re}}$ is 
\begin{equation*}
\text{CT}\big(\mathsf{G}_{\text{re}}\big) = \prod_{\sigma \in \mathcal{M}_{F}^{\text{ur}}} \frac{\text{SU}_d}{\text{SO}_d} \times \prod_{\sigma \in \mathcal{M}_{F}^{\text{ram}}} \frac{\text{SU}_d}{\text{Sp}_{\frac{d}{2}}} \times \prod_{\sigma \in \mathcal{M}_{F}^{\text{im}}} \text{SU}_d.
\end{equation*}
The cohomology rings of the individual factors are already well-known and described in Appendix~\ref{appendixA.2}. To describe the cohomology of the product, we introduce a convenient notation. For distinct homogeneous cohomology classes $\{x_1, \cdots x_m\}$ let $\Delta_{\mathds{k}}[x_1, \ldots, x_m]$ denote a subalgebra of the cohomology ring which possesses a basis of the form $\{x_{j_1}\cdots x_{j_r} \mid \substack{j_1 < \cdots < j_r \\ 0 \leq r \leq m}\}$. With this notation, the cohomology ring is isomorphic to 
\[\Lambda_{\mathbb{C}}[\alpha_{2p-1}^{\sigma} \mid \substack{2 \leq p \leq d, \sigma \in \mathcal{M}_{F}^\text{im}}] \otimes \Lambda_{\mathbb{C}}[\alpha_{2p-1}^{\sigma} \mid \substack{\text{$2 \leq p \leq d$, $p$ is odd,} \\ \text{and $\sigma \in \mathcal{M}^{\text{re}}_F$}}] \otimes \Delta_{\mathbb{C}}[\varepsilon_{d}^{\sigma} \mid \substack{\text{$\sigma \in \mathcal{M}_{F,d}^{\text{ur}}$}}]\]
where $\alpha$-s and $\varepsilon$-s are are certain canonical classes, and 
\begin{equation*}
\mathcal{M}_{F,d}^{\text{ur}} = \begin{cases}
			\emptyset, & \text{if $d$ is odd;}\\
			\big\{\substack{\text{real places $\sigma$ of $F$}\\ \text{so that $\mathsf{D}$ splits at $\sigma$}}\big\}, & \text{if $d$ is even.}
		\end{cases} 
\end{equation*}   
Here the classes in the second factor arise for each space indexed by $\mathcal{M}^{\text{re}}_F = \mathcal{M}^{\text{ur}}_F \cup \mathcal{M}^{\text{ram}}_F$, while the classes in the third factor appear from each $\frac{\text{SU}_d}{\text{SO}_d}$ only if $d$ is even. The images of the $\alpha$ and $\varepsilon$ classes under the Borel map \eqref{3.4} generates $H^{\ast}\big(Y(\mathsf{K}_f), \mathbb{C}\big)_{\text{tr}}$ as a subalgebra. Note that the Borel map is not defined over $\mathbb{Q}$, and the images of the generators listed above need not lie in rational cohomology. However, the Hecke eigenvalues attached to the trivial summand are rational. Thus, the change of scalars in cohomology gives rise to a natural isomorphism  
\begin{equation}
\label{3.5}
H^{\ast}\big(Y(\mathsf{K}_{f}), \mathbb{Q}\big)_{\text{tr}} \otimes \mathbb{C} \cong H^{\ast}\big(Y(\mathsf{K}_{f}), \mathbb{C}\big)_{\text{tr}}.	
\end{equation}
In particular, the rational subspace $H^{\ast}\big(Y(\mathsf{K}_{f}), \mathbb{Q}\big)_{\text{tr}}$ is also closed under cup product. One can use the same argument for the field extension $\mathbb{Q}_{\ell}$ of $\mathbb{Q}$ to discover that $ H^{\ast}\big(Y(\mathsf{K}_{f}), \mathbb{Q}_{\ell}\big)_{\text{tr}} = H^{\ast}\big(Y(\mathsf{K}_{f}), \mathbb{Q}\big)_{\text{tr}} \otimes \mathbb{Q}_{\ell}$ and this subspace is a $\mathbb{Q}_{\ell}$-subalgebra of $H^{\ast}\big(Y(\mathsf{K}_{f}), \mathbb{Q}_{\ell}\big)$. We conclude the discussion with a special rationality property for the $\varepsilon$-classes that is relevant for our purpose. 
	
\begin{lemma}
\label{lemma3.9}
Let the notation be as above. Then \[\emph{Bo}_{\emph{ct}, \Gamma_{\mathsf{K}_f}}(\varepsilon_{d}^{\sigma}) \in H^{\ast}\big(Y(\mathsf{K}_f), \mathbb{Q}\big)_{\emph{tr}}\] for each $\sigma \in \mathcal{M}_{F,d}^{\emph{ur}}$.  
\end{lemma}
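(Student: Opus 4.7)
The plan is to recognize $\varepsilon_d^\sigma$ as a rational multiple of the Euler class of a natural oriented rank-$d$ real vector bundle on the compact twin, and to show that the Borel map carries it to the Euler class of the corresponding oriented rank-$d$ real vector bundle on $Y(\mathsf{K}_f)$. Since Euler classes of oriented real vector bundles admit integral lifts, the rationality of $\mathrm{Bo}_{\mathrm{ct},\Gamma_{\mathsf{K}_f}}(\varepsilon_d^\sigma)$ then follows at once. Note that Hecke triviality is automatic because the image of the Borel map lies in $H^*\big(Y(\mathsf{K}_f),\mathbb{C}\big)_{\text{tr}}$ by \eqref{3.3}--\eqref{3.4}, so the only content of the lemma is rationality.

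Fix $\sigma \in \mathcal{M}_{F,d}^{\text{ur}}$, so $d$ is even and $K_\sigma = \mathrm{SO}_d$. First, the principal $\mathrm{SO}_d$-bundle $\mathrm{SU}_d \to \mathrm{SU}_d/\mathrm{SO}_d$, combined with the defining representation of $\mathrm{SO}_d$ on $\mathbb{R}^d$, produces an oriented rank-$d$ real vector bundle $V_\sigma^{\text{ct}}$ on the $\sigma$-factor of the compact twin; pulling back along the projection from $\mathrm{CT}(\mathsf{G}_{\text{re}})$ to the $\sigma$-factor, one obtains an oriented rank-$d$ real vector bundle on the full compact twin. The description of $H^*(\mathrm{SU}_d/\mathrm{SO}_d)$ in Appendix~\ref{appendixA.2} identifies $\varepsilon_d^\sigma$, up to a nonzero rational scalar, with the complexification of the integral Euler class $e(V_\sigma^{\text{ct}}) \in H^d(\mathrm{CT}(\mathsf{G}_{\text{re}});\mathbb{Z})$. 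Second, on the arithmetic side, the principal $\mathsf{K}$-bundle $\Gamma_{\mathsf{K}_f} \backslash \mathsf{G}_{\text{re}} \to Y(\mathsf{K}_f)$ projected onto its $K_\sigma = \mathrm{SO}_d$ factor, combined again with the defining representation, yields an oriented rank-$d$ real vector bundle $V_\sigma^Y$ on $Y(\mathsf{K}_f)$ with an integral Euler class $e(V_\sigma^Y) \in H^d(Y(\mathsf{K}_f);\mathbb{Z})$.

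The main step is to match these two Euler classes via the Borel map. Using the description in Appendix~\ref{appendixA.1}, which presents $\mathrm{Bo}_{\mathrm{ct},\Gamma_{\mathsf{K}_f}}$ as the composition of the isomorphism $H^*\big(\mathrm{CT}(\mathsf{G}_{\text{re}}),\mathbb{C}\big) \cong H^*(\mathfrak{g},\mathsf{K};\mathbb{C})$ with the evaluation of an element of $(\Lambda^*\mathfrak{p}^*)^\mathsf{K}$ on the left-invariant forms on $\mathsf{X}$, I would represent each Euler class through Chern--Weil theory. Both $V_\sigma^{\text{ct}}$ and $V_\sigma^Y$ carry canonical $\mathsf{K}$-invariant connections coming respectively from the Maurer--Cartan form on $\mathsf{SU}_d$ (via its compact dual identification with a piece of $\mathsf{G}_{\text{re}}$) and from the Maurer--Cartan form on $\mathsf{G}_{\text{re}}$; in both cases the Pfaffian of the curvature is built from the same $\mathrm{Ad}(\mathrm{SO}_d)$-invariant polynomial on $\mathfrak{so}_d$. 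By construction, the Borel map sends the invariant Pfaffian form on the compact twin to the corresponding invariant Pfaffian form on $Y(\mathsf{K}_f)$. Hence $\mathrm{Bo}_{\mathrm{ct},\Gamma_{\mathsf{K}_f}}(\varepsilon_d^\sigma)$ equals a nonzero rational scalar times $e(V_\sigma^Y) \otimes_{\mathbb{Z}} \mathbb{C}$, which lies in $H^d(Y(\mathsf{K}_f),\mathbb{Q})_{\text{tr}}$ as required.

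The hard part is the third step: verifying that the Borel map genuinely takes the Pfaffian representative on the compact twin to the Pfaffian representative on $Y(\mathsf{K}_f)$ and that the two are related by a rational (not transcendental) scalar. This reduces to Chern--Weil bookkeeping once one pins down the precise normalization of $\varepsilon_d^\sigma$ coming from Appendix~\ref{appendixA.2}; the essential content is that the identification of $H^*(\mathrm{CT})$ with $(\Lambda^*\mathfrak{p}^*)^{\mathsf{K}}$ respects the common origin of the two Pfaffian forms in the same $\mathrm{Ad}(\mathrm{SO}_d)$-invariant polynomial on $\mathfrak{so}_d$, with all powers of $2\pi$ absorbed uniformly into the scalar.
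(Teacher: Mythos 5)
Your proposal is correct and takes essentially the same route as the paper: $\varepsilon_d^\sigma$ is recognized as (the pullback of) an integral Euler class living over $B\mathsf{K}$, and the rationality of $\mathrm{Bo}_{\mathrm{ct},\Gamma_{\mathsf{K}_f}}(\varepsilon_d^\sigma)$ follows from compatibility of the Borel map with maps to the classifying space. The one point worth noting is that the ``hard part'' you flag — pinning down the Chern–Weil scalar normalization — does not actually arise in the paper's argument: Appendix~\ref{appendixA.2} defines $\varepsilon_d^\sigma$ as the \emph{exact} pullback of $\chi \in H^{d}(B\mathsf{K},\mathbb{Z})$ (not merely a rational multiple of it), and Proposition~\ref{propositionA.3} asserts \emph{strict} commutativity of the triangle \eqref{A.5}, so $\mathrm{Bo}_{\mathrm{ct},\Gamma_{\mathsf{K}_f}}(\varepsilon_d^\sigma)$ literally equals the image of $\chi$ under $H^{*}(B\mathsf{K},\mathbb{C}) \to H^{*}(Y(\mathsf{K}_f),\mathbb{C})$, which is manifestly integral, hence rational. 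Your Pfaffian bookkeeping is a correct re-derivation of the relevant case of Proposition~\ref{propositionA.3}, but citing that proposition directly is cleaner and eliminates the scalar ambiguity you were worried about.
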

	
\begin{proof}
The principal $\mathsf{K}$-bundles $\Gamma_{\mathsf{K}_f} \backslash \mathsf{G}_{\text{re}} \to Y(\mathsf{K}_f)$ gives rise to a natural map $Y(\mathsf{K}_f) \to B\mathsf{K}$. We consider the pullback of this map in the singular cohomology \[H^{\ast}\big(B\mathsf{K}, \mathbb{C}\big) \to H^{\ast}\big(Y(\mathsf{K}_f), \mathbb{C}\big).\] Now the $\varepsilon$ classes in the compact twin arise as pullback of the integral Euler classes, denoted $\chi$, in the cohomology of $B\mathsf{K}$; see Appendix~\ref{appendixA.2}. The images of these $\chi$ classes under the map above lie in $H^{\ast}\big(Y(\mathsf{K}_f), \mathbb{Q}\big)$. Now the assertion is a consequence of the compatibility between the maps into classifying spaces and the Borel map described in Proposition~\ref{propositionA.3}.   
\end{proof}

\section{Stable subspace of cohomology}
\label{section4}
The primary goal of this section is to identify the stable classes appearing in the cohomology of $Y(\mathsf{K}_f)$. For this purpose, one must define an embedding of $Y(\mathsf{K}_f)$ into a larger arithmetic manifold. Let the notation be as in Section~\ref{section3.4}. For a positive integer $r$, we define a closed embedding of algebraic groups 
\[\iota_{r} : \mathsf{G} \xhookrightarrow{} \text{SL}_{rd^2}\]
by letting $\mathsf{D}$ act on $\mathsf{D}^{\oplus r}$ by coordinate wise left multiplication.  The map $\iota_{r}$ induces a closed embedding of real Lie groups $\mathsf{G}(F \otimes_{\mathbb{Q}} \mathbb{R}) \xhookrightarrow{} \text{SL}_{rd^2}(F \otimes_{\mathbb{Q}} \mathbb{R})$. We choose an inner product on $F_{\sigma}^{rd^2}$ for each $\sigma \in \mathcal{M}_{F}^{\infty}$ so that the image of $K_{\sigma} \subseteq \mathsf{G}(F_{\sigma})$ lies inside $K_{rd^2, \sigma} \subseteq \text{SL}_{rd^2}(F_{\sigma})$ where 
\[K_{rd^2, \sigma} = \begin{cases}
		\text{SO}_{rd^2}, & \sigma \in \mathcal{M}_{F}^{\text{re}};\\
		\text{SU}_{rd^2}, & \sigma \in \mathcal{M}_{F}^{\text{im}};
	\end{cases}\] 
see Section~\ref{section4.1} for details. The maximal order $\mathcal{O}_{\mathsf{D}}$ endows $\mathsf{D}^{\oplus r}$ with a natural integral structure. With respect to this choice of coordinates, the embedding $\iota_{r}$ maps the open compact subgroup $\mathsf{K}_{f}$ into the standard maximal compact $\prod_{v \in \mathcal{M}_{F}^{\text{fin}}} \text{SL}_{rd^2}(\mathcal{O}_v)$. Set 
\[\bar{B}\big(\text{SL}_{rd^2}(\mathcal{O}_F)\big) := \text{SL}_{rd^2}(\mathcal{O}_F) \backslash \text{SL}_{rd^2}(F \otimes_{\mathbb{Q}} \mathbb{R})/\prod_{\sigma \in \mathcal{M}_{F}^{\infty}} K_{rd^2, \sigma}.\]
Here $\text{SL}_{rd^2}(\mathcal{O}_F)$ may contain torsion elements, and the quotient by the discrete subgroup need not be a covering. However, if the order of each torsion element of $\text{SL}_{rd^2}(\mathcal{O}_F)$ is invertible in $\mathds{k}$ then $H^{\ast}\big(\bar{B}(\text{SL}_{rd^2}(\mathcal{O}_F)), \mathds{k} \big)$ computes the group cohomology $H^{\ast}\big(\text{SL}_{rd^2}(\mathcal{O}_F), \mathds{k}\big)$. The map $\iota_{r}$ gives rise to an embedding of arithmetic quotients \[\iota_{r}: Y(\mathsf{K}_{f}) \xhookrightarrow{} \bar{B}\big(\text{SL}_{rd^2}(\mathcal{O}_F)\big).\]
For $n \geq 1$ set $\mathsf{G}_n = \text{Res}^{F}_{\mathbb{Q}} \text{SL}_{n,F}$ and consider the sequence of algebraic groups $\{\mathsf{G}_{n}\}_{n \geq 1}$ as in our discussion on stable cohomology in Appendix~\ref{appendixA.1}. We choose a large enough positive integer $r$ so that $rd^2 > d(F)$ and the maps
\begin{itemize}
\item $\text{Bo}_{\text{ct}, \Gamma_{rd^2}}^{j}: H^{j}\big(\text{CT}\big(\mathsf{G}_{rd^2}(\mathbb{R})\big), \mathbb{C}\big) \to H^{j}\big(\text{SL}_{rd^2}(\mathcal{O}_F), \mathbb{C}\big)$,
\item $H^{j}\big(\text{CT}\big(\mathsf{G}_{\infty}(\mathbb{R})\big), \mathbb{C}\big) \to H^{j}\big(\text{CT}\big(\mathsf{G}_{rd^2}(\mathbb{R})\big), \mathbb{C}\big)$,
\item $H_{j}\big(\text{SL}_{rd^2}(\mathcal{O}_F), \mathds{k}\big) \to H_{j}\big(\text{SL}_{\infty}(\mathcal{O}_F), \mathds{k}\big)$ 
\end{itemize}
induces isomorphisms for each $0 \leq j \leq d(F)$ and for all commutative rings $\mathds{k}$. One can use Borel's estimate for the stability range \cite[9]{borel} and van der Kallen's theorem to prescribe an explicit choice for $r$. We use this integer to propose a convenient notion of the stable subspace of the cohomology. 
	
\begin{definition}
\label{definition4.1}
Let $\mathds{k}$ be a commutative ring so that the order of each torsion element of $\text{SL}_{rd^2}(\mathcal{O}_F)$ is invertible in $\mathds{k}$. Then the \textit{stable subspace} of $H^{\ast}\big(Y(\mathsf{K}_f), \mathds{k}\big)$, denoted $H^{\ast}\big(Y(\mathsf{K}_f), \mathds{k}\big)_{\text{st}}$, is the image of the pullback map   
\[\iota_{r}^{\ast}: H^{\ast}\big(\text{SL}_{rd^2}(\mathcal{O}_F), \mathds{k}\big) \to H^{\ast}\big(Y(\mathsf{K}_f), \mathds{k}\big).\] 	
\end{definition}
	
Note that the stable subspace is a $\mathds{k}$-subalgebra of the cohomology ring $H^{\ast}\big(Y(\mathsf{K}_f), \mathds{k}\big)$. We next determine the stable subspace for complex and $\ell$-adic coefficients.

\subsection{Complex coefficients}
\label{section4.1}
Let $r$ be the fixed positive integer in Definition~\ref{definition4.1}. We consider the coordinate wise left multiplication action of $\text{GL}_{d}(\mathbb{C})$ on $M_d(\mathbb{C})^{\oplus r}$. This action gives rise to a homomorphism of Lie groups 
	\begin{equation}
		\label{4.1}
		T^{(r)} : \text{GL}_d(\mathbb{C}) \to \text{GL}_{rd^2}(\mathbb{C}); \hspace{.3cm} X \mapsto T^{(r)}_{X},\; T^{(r)}_X(A):= XA
	\end{equation}
so that $\text{Tr}(T_X^{(r)}) = rd\text{Tr}(X)$ and $\text{det}(T_X^{(r)}) = \text{det}(X)^{rd}$. One may turn $M_{d}(\mathbb{C})^{\oplus r}$ into a Hermitian space by declaring the natural basis arising from elementary matrices to be orthonormal. Then $T^{(r)}$ above maps $\text{SU}_{d}$ inside $\text{SU}_{rd^2}$. Observe that $T^{(r)}$ is compatible with the underlying real structures of $\text{GL}_{d}(\mathbb{C})$ and $\text{GL}_{rd^2}(\mathbb{C})$ and $T^{(r)}$ carries $\text{SO}_d$ inside $\text{SO}_{rd^2}$. If $d$ is even then there is a standard $\mathbb{R}$-linear embedding of $M_{\frac{d}{2}}(\mathbb{H})$ into $M_{d}(\mathbb{C})$
that identifies $M_{\frac{d}{2}}(\mathbb{H})$ with $\Big\{\begin{pmatrix}
		A & -\bar{B}\\
		B & \bar{A}
	\end{pmatrix} \mid A, B \in M_{\frac{d}{2}}(\mathbb{C}) \Big\} \subseteq M_d(\mathbb{C})$; see \eqref{A.8}. 
Here $M_{\frac{d}{2}}(\mathbb{H})$ is a real form of $M_{d}(\mathbb{C})$, i.e., $M_d(\mathbb{C}) = M_{\frac{d}{2}}(\mathbb{H}) \otimes_{\mathbb{R}} \mathbb{C}$. Since this real form differs from the standard one, the image of $\text{Sp}_{\frac{d}{2}}$ under \eqref{4.1} need not lie inside the usual copy of $\text{SO}_{rd^2} \subseteq \text{SU}_{rd^2}$. However, there exists a unitary matrix $A_r \in \text{U}_{rd^2}$ so that $T^{(r)}\big(\text{Sp}_{\frac{d}{2}}\big) \subseteq  A_r\text{SO}_{rd^2}A_r^{-1}$. Let 
\[\iota_{r}: \text{CT}\big(\mathsf{G}(\mathbb{R})\big) \to \text{CT}\big(\mathsf{G}_{rd^2}(\mathbb{R})\big)\]
denote the embedding attached to $\iota_{r}$ at the level of compact duals. The different components of this map are as follows:  
\begin{equation*}
\begin{aligned}
\iota_{r, \sigma}& : \text{SU}_{d} \to \text{SU}_{rd^2}, \hspace{.1cm} X \to T^{(r)}_X; \hspace{.5cm}(\substack{\sigma \in \mathcal{M}_{F}^{\text{im}}})\\
\iota_{r, \sigma}& : \text{SU}_{d}/\text{SO}_d \to \text{SU}_{rd^2}/\text{SO}_{rd^2},\hspace{.1cm} [X] \to [T^{(r)}_X];\hspace{.5cm}(\substack{\sigma \in \mathcal{M}_{F}^{\text{ur}}})\\
\iota_{r, \sigma}&: \text{SU}_{d}/\text{Sp}_{\frac{d}{2}} \to \text{SU}_{rd^2}/A_r\text{SO}_{rd^2} A_r^{-1},\hspace{.1cm} [X] \to [T^{(r)}_X]. \hspace{.5cm}(\substack{\sigma \in \mathcal{M}_{F}^{\text{ram}}})
\end{aligned}
\end{equation*}
	
For the third case, one needs to observe that there is a homeomorphism 
\[\text{$\frac{\text{SU}_{rd^2}}{A_r\text{SO}_{rd^2} A_r^{-1}} \cong \frac{\text{SU}_{rd^2}}{\text{SO}_{rd^2}}$ given by $[X] \mapsto [A_r^{-1}XA_r]$.}\]
The $\alpha$-classes in the cohomology of $\text{SU}_{rd^2}$  (Appendix~\ref{appendixA.2}) are invariant under the conjugation by an element of $\text{U}_{rd^2}$. Thus the pullback of the $\alpha$-classes in $\text{SU}_{rd^2}/\text{SO}_{rd^2}$ under the homeomorphism above is compatible with the natural projection $\text{SU}_{rd^2} \to \text{SU}_{rd^2}/A_r\text{SO}_{rd^2}A_r^{-1}$. In practice, we identify the target at ramified places with $\text{SU}_{rd^2}/\text{SO}_{rd^2}$ via the homeomorphism above.

\begin{lemma}
\label{lemma4.2}
We have 
\[\iota_{r, \sigma}^{\ast}(\alpha_{2p-1}^{\sigma}) = \begin{cases}
			rd\alpha_{2p-1}^{\sigma}, & \text{$2 \leq p \leq d$;}\\
			0, & \text{$p \geq d+1$}.
		\end{cases} \hspace{.5cm}(\substack{\sigma \in \mathcal{M}_F^{\emph{im}}})\]
Moreover,
\[\iota_{r,\sigma}^{\ast}(\alpha_{2p-1}^{\sigma}) = \begin{cases}
			rd\alpha_{2p-1}^{\sigma}, & \text{$2 \leq p \leq d$;}\\
			0, & \text{$p \geq d+1$,}
		\end{cases} \hspace{.5cm}(\substack{\sigma \in \mathcal{M}_F^{\emph{ur}}, \emph{ $p$ odd}})\]
and 
\[\iota_{r, \sigma}^{\ast}(\alpha_{2p-1}^{\sigma}) = \begin{cases}
			rd\alpha_{2p-1}^{\sigma}, & \text{$2 \leq p \leq d$;}\\
			0, & \text{$p \geq d+1$.}
		\end{cases} \hspace{.5cm}(\substack{\sigma \in \mathcal{M}_F^{\emph{ram}}, \emph{ $p$ odd}})\]  
\end{lemma}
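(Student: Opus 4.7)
The plan is to reduce all three cases to a single computation on $\text{SU}_{rd^{2}}$, exploiting the direct-sum structure of $T^{(r)}$ together with the primitivity of the $\alpha$-classes in the Hopf algebra $H^{\ast}(\text{SU}_{N}, \mathbb{C})$. The first observation is that, as a left $M_{d}(\mathbb{C})$-module, $M_{d}(\mathbb{C})^{\oplus r}$ decomposes canonically as a direct sum of $rd$ copies of the standard column representation $\mathbb{C}^{d}$; after a unitary change of basis, the map $T^{(r)}: \text{SU}_{d} \to \text{SU}_{rd^{2}}$ therefore becomes the block-diagonal embedding consisting of $rd$ copies of the standard inclusion $\text{std}: \text{SU}_{d} \hookrightarrow \text{SU}_{rd^{2}}$. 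Because $\text{U}_{rd^{2}}$ is connected, conjugation acts trivially on cohomology, so this change of basis is invisible at the level of $\iota_{r,\sigma}^{\ast}$.

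For the imaginary case I would use Appendix~\ref{appendixA.2}, according to which the $\alpha_{2p-1}$ form a basis of the primitive subspace of $H^{\ast}(\text{SU}_{N}, \mathbb{C})$, normalized so that $\text{std}^{\ast}(\alpha_{2p-1}) = \alpha_{2p-1}$ for $2 \leq p \leq d$ and $\text{std}^{\ast}(\alpha_{2p-1}) = 0$ for $p \geq d+1$. Since $T^{(r)}$ is a group homomorphism, $T^{(r)\ast}$ preserves primitives, and the primitive subspace of $H^{2p-1}(\text{SU}_{d}, \mathbb{C})$ vanishes for $p \geq d+1$, which handles that range. For $2 \leq p \leq d$, I would factor the block-diagonal embedding as the diagonal $\Delta: \text{SU}_{d} \to \text{SU}_{d}^{\times rd}$ composed with the group homomorphism $b: \text{SU}_{d}^{\times rd} \to \text{SU}_{rd^{2}}$ sending $(X_{1}, \ldots, X_{rd})$ to $\text{diag}(X_{1}, \ldots, X_{rd})$. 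Primitivity of $\alpha_{2p-1}$ under the K\"unneth coproduct gives $b^{\ast}(\alpha_{2p-1}) = \sum_{j=1}^{rd} 1 \otimes \cdots \otimes \alpha_{2p-1} \otimes \cdots \otimes 1$, and $\Delta^{\ast}$ converts this K\"unneth product into iterated cup product, yielding $T^{(r)\ast}(\alpha_{2p-1}) = rd \cdot \alpha_{2p-1}$.

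For the real cases, let $H_{N} \in \{\text{SO}_{N}, \text{Sp}_{N/2}\}$ be the relevant compact subgroup and $\pi_{N}: \text{SU}_{N} \to \text{SU}_{N}/H_{N}$ the natural projection; in the ramified case one first identifies the target $\text{SU}_{rd^{2}}/A_{r}\text{SO}_{rd^{2}}A_{r}^{-1}$ with $\text{SU}_{rd^{2}}/\text{SO}_{rd^{2}}$ via $[X] \mapsto [A_{r}^{-1}XA_{r}]$, which is invisible on $\alpha$-classes by their conjugation-invariance recalled just before the lemma. The generators $\alpha_{2p-1}^{\sigma}$ on the symmetric-space factors are defined (Appendix~\ref{appendixA.2}) so that $\pi_{N}^{\ast}(\alpha_{2p-1}^{\sigma}) = \alpha_{2p-1}$ for the allowed odd $p$, and the commutative square
\begin{equation*}
\begin{tikzcd}
\text{SU}_{d}\arrow[r, "T^{(r)}"] \arrow[d, "\pi_{d}"'] & \text{SU}_{rd^{2}} \arrow[d, "\pi_{rd^{2}}"] \\
\text{SU}_{d}/H_{d} \arrow[r, "\iota_{r, \sigma}"] & \text{SU}_{rd^{2}}/H_{rd^{2}}
\end{tikzcd}
\end{equation*}
translates the desired identity into $\pi_{d}^{\ast}(\iota_{r,\sigma}^{\ast}(\alpha_{2p-1}^{\sigma})) = T^{(r)\ast}(\alpha_{2p-1})$, already evaluated above. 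Since $\pi_{d}^{\ast}$ sends the exterior-algebra basis of $H^{\ast}(\text{SU}_{d}/H_{d}, \mathbb{C})$ to a linearly independent family of monomials in $H^{\ast}(\text{SU}_{d}, \mathbb{C})$, it is injective in each degree, and one reads off $\iota_{r,\sigma}^{\ast}(\alpha_{2p-1}^{\sigma})$ directly.

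The main obstacle is the bookkeeping around the normalization of the $\alpha$-classes on the different symmetric spaces and the verification that $\pi_{N}^{\ast}$ is injective on both the indecomposable and decomposable parts in the odd degrees of interest. Once the conventions of Appendix~\ref{appendixA.2} are fixed so that $\pi_{N}^{\ast}(\alpha_{2p-1}^{\sigma}) = \alpha_{2p-1}$, the uniform coefficient $rd$ and the vanishing for $p \geq d+1$ emerge simultaneously from the additivity of the primitive basis under direct sums of representations.
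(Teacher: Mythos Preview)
Your argument for the imaginary places is correct and takes a genuinely different route from the paper. The paper works in the de Rham model: it pulls back the bi-invariant form $\Phi_{2p-1}^{rd^2}$ of \eqref{A.7} along the derivative $\mathfrak{T}^{(r)}$ of $T^{(r)}$, and the identities $\mathfrak{T}^{(r)}_{X_1} \circ \cdots \circ \mathfrak{T}^{(r)}_{X_{2p-1}} = \mathfrak{T}^{(r)}_{X_1 \cdots X_{2p-1}}$ and $\text{Tr}\big(\mathfrak{T}^{(r)}_X\big) = rd\,\text{Tr}(X)$ give $\iota_{r,\sigma}^{\ast}(\Phi_{2p-1}^{rd^2}) = rd\,\Phi_{2p-1}^{d}$ directly at the level of forms. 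Your approach via primitivity of $\alpha_{2p-1}$ under the block-sum coproduct is more conceptual and avoids choosing a de Rham representative, at the cost of invoking the Hopf-algebra structure on $H^{\ast}(\text{SU}_N,\mathbb{C})$; the paper's computation is more hands-on but needs nothing beyond the trace formula.

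For the real places both you and the paper reduce to the $\text{SU}$ computation via the commutative square with $\pi_d$. One caution on your stated justification: the claim that $\pi_d^{\ast}$ is injective because it carries an ``exterior-algebra basis'' of $H^{\ast}(\text{SU}_d/H_d,\mathbb{C})$ to linearly independent monomials is not correct in the unramified case with $d$ even. There $H^{\ast}(\text{SU}_d/\text{SO}_d,\mathbb{C})$ is not a free exterior algebra---it contains $\varepsilon_d$---and $\pi_d^{\ast}(\varepsilon_d)=0$ because the pulled-back $\text{SO}_d$-bundle over $\text{SU}_d$ is trivial; so products such as $\varepsilon_d\,\alpha_{2q-1}$ lie in $\ker\pi_d^{\ast}$ in odd degree (e.g.\ $d=8$, degree $13$). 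The paper's one-line reduction glosses over the same point. For the application in Proposition~\ref{proposition4.3} one only needs the identity modulo the ideal generated by $\varepsilon_d$, and that much both arguments deliver; if you want the lemma as literally stated you should supply an extra argument ruling out an $\varepsilon_d$-contribution to $\iota_{r,\sigma}^{\ast}(\alpha_{2p-1}^{\sigma})$.
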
 
	
\begin{proof}
Because of the compatibility between the $\alpha$-classes in $\text{SU}$ and its quotients, the second and third parts of the statement directly follow from the first identity. We use the de Rham model for $\alpha$-classes \eqref{A.7} to verify the statement for $\text{SU}$. The derivative of $T^{(r)}$ equals 
\[\mathfrak{T}^{(r)}: \mathfrak{gl}_d(\mathbb{C}) \to \mathfrak{gl}_{rd^2}(\mathbb{C});\hspace{.3cm} X \mapsto \mathfrak{T}^{(r)}_X,\; \mathfrak{T}^{(r)}_X(A) = XA \]
where we identify $\text{End}\big(M_{d}(\mathbb{C})^{\oplus r}\big)$ with $\mathfrak{gl}_{rd^2}(\mathbb{C})$ in the expected manner. In particular $\text{Tr}\big(\mathfrak{T}^{(r)}_X\big) = rd\text{Tr}(X)$. Let $X_1, \ldots, X_{2p-1} \in \mathfrak{su}_d$ for some $p \geq 2$. Note that $\mathfrak{T}^{(r)}_{X_1} \circ \cdots \circ \mathfrak{T}^{(r)}_{X_{2p-1}} = \mathfrak{T}^{(r)}_{X_1 \circ \cdots \circ X_{2p-1}}$. Then, a simple calculation demonstrates that 
\begin{equation*}
\begin{aligned}
\iota_{r, \sigma}^{\ast}\big(\Phi_{2p-1}^{rd^2}\big)(X_1, \ldots, X_{2p-1}) & = \Phi_{2p-1}^{rd^2}\big(\mathfrak{T}^{(r)}_{X_1}, \cdots, \mathfrak{T}^{(r)}_{X_{2p-1}}\big)\\
& = \begin{cases}
rd \Phi_{2p-1}^{d}(X_1, \cdots, X_{2p-1}), & \text{$2 \leq p \leq d$;}\\
0, & \text{$p \geq d+1$.}
\end{cases}
\end{aligned}
\end{equation*}  
\end{proof}
	
The embedding $\iota_{r}$ gives rise to a commutative diagram 
\begin{equation}
\label{4.2}
\begin{tikzcd}
H^{\ast}\big(\text{CT}\big(\mathsf{G}_{rd^2}(\mathbb{R})\big), \mathbb{C}\big) \arrow[r, "\iota_{r}^{\ast}"]\arrow[d, "\text{Bo}^{\ast}_{\text{ct}, \Gamma_{rd^2}}"] & H^{\ast}\big(\text{CT}(\mathsf{G}_{\text{re}}), \mathbb{C}\big)\arrow[d, "\text{Bo}^{\ast}_{\text{ct}, \Gamma_{\mathsf{K}_{f}}}"]\\
H^{\ast}\big(\text{SL}_{rd^2}(\mathcal{O}_F), \mathbb{C}\big) \arrow[r, "\iota_{r}^{\ast}"] & H^{\ast}\big(Y(\mathsf{K}_{f}), \mathbb{C}\big). 
\end{tikzcd}
\end{equation}
By choice of $r$, the left vertical arrow is an isomorphism in the degrees $0 \leq j \leq d(F)$. Moreover, the Borel map in the right vertical arrow is also injective.  Recall the explicit description of the trivial subspace from Section~\ref{section3.4}. For simplicity, we use the same notation for the $\alpha$-classes and their images under the Borel map $\text{Bo}^{\ast}_{\text{ct}, \Gamma_{\mathsf{K}_{f}}}$.

\begin{proposition}
\label{proposition4.3}
The stable subspace $H^{\ast}\big(Y(\mathsf{K}_f), \mathbb{C}\big)_{\emph{st}}$ equals the free exterior subalgebra generated by 
\[\big\{\alpha_{2p-1}^{\sigma} \mid \substack{2 \leq p \leq d, \sigma \in \mathcal{M}^{\emph{im}}_F} \big\} \cup \big\{\alpha_{2p-1}^{\sigma} \mid \substack{\text{$2 \leq p \leq d$, $p$ is odd,} \\ \text{and $\sigma \in \mathcal{M}^{\emph{re}}_F$}}\big\}.\]
In particular $H^{\ast}\big(Y(\mathsf{K}_f), \mathbb{C}\big)_{\emph{st}} \subseteq H^{\ast}\big(Y(\mathsf{K}_f), \mathbb{C}\big)_{\emph{tr}}$.  
\end{proposition}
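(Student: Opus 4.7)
The plan is to exploit the commutative diagram \eqref{4.2} to transfer the computation of the stable subspace to the compact dual side, where Lemma~\ref{lemma4.2} applies directly. Since $Y(\mathsf{K}_f)$ is a compact manifold of dimension $d(F)$, its cohomology vanishes above degree $d(F)$, so only the restriction of the bottom $\iota_r^{\ast}$ to $H^{\leq d(F)}\big(\text{SL}_{rd^2}(\mathcal{O}_F), \mathbb{C}\big)$ can possibly contribute to the stable subspace. By the choice of $r$, the left Borel map $\text{Bo}^{\ast}_{\text{ct}, \Gamma_{rd^2}}$ is an isomorphism in degrees $\leq d(F)$; combined with the commutativity of \eqref{4.2}, this identifies the stable subspace with the image of the composite $\text{Bo}^{\ast}_{\text{ct}, \Gamma_{\mathsf{K}_f}} \circ \iota_r^{\ast}$ coming from the top row.

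Next, I would compute the image of the top arrow using Lemma~\ref{lemma4.2}. The source $H^{\ast}\big(\text{CT}(\mathsf{G}_{rd^2}(\mathbb{R})), \mathbb{C}\big)$ factors across archimedean places of $F$, each factor being $\text{SU}_{rd^2}$ at imaginary places and $\text{SU}_{rd^2}/\text{SO}_{rd^2}$ at real places (since $\text{SL}_{rd^2}$ is split at all places). Its cohomology is generated by the $\alpha_{2p-1}^{\sigma}$-classes with $2 \leq p \leq rd^2$ (odd $p$ at real places), together with Euler classes in degree $rd^2 > d(F)$ at real places when $rd^2$ is even; the latter play no role in our degree range. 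Lemma~\ref{lemma4.2} states uniformly across the imaginary, unramified real, and ramified real cases that $\iota_{r,\sigma}^{\ast}(\alpha_{2p-1}^{\sigma}) = rd \cdot \alpha_{2p-1}^{\sigma}$ for $2 \leq p \leq d$ and vanishes for $p > d$. Over $\mathbb{C}$ the scalar $rd$ is invertible, so the image of the top $\iota_r^{\ast}$ is precisely the exterior subalgebra of $H^{\ast}\big(\text{CT}(\mathsf{G}_{\text{re}}), \mathbb{C}\big)$ generated by $\alpha_{2p-1}^{\sigma}$ with $2 \leq p \leq d$ and the parity restrictions stated in the proposition.

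To conclude, I would apply the injective algebra homomorphism $\text{Bo}^{\ast}_{\text{ct}, \Gamma_{\mathsf{K}_f}}$: the stable subspace equals the subalgebra of $H^{\ast}(Y(\mathsf{K}_f), \mathbb{C})$ generated by the images of these $\alpha$-classes, and freeness is inherited from the decomposition of $H^{\ast}(Y(\mathsf{K}_f), \mathbb{C})_{\text{tr}}$ in Section~\ref{section3.4}, where the same $\alpha$-classes already span a free exterior factor. The containment $H^{\ast}(Y(\mathsf{K}_f), \mathbb{C})_{\text{st}} \subseteq H^{\ast}(Y(\mathsf{K}_f), \mathbb{C})_{\text{tr}}$ is then automatic, since the image of the right Borel map is by definition the trivial subspace. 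I expect the only delicate point to be the degree-range bookkeeping in the first reduction, which relies crucially on the bound $\dim Y(\mathsf{K}_f) = d(F)$ both to discard contributions from degrees where the left Borel map fails to be an isomorphism and to render the Euler classes in the source irrelevant; once that is handled, the rest is a straightforward unraveling of Lemma~\ref{lemma4.2}.
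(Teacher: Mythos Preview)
Your proposal is correct and follows essentially the same route as the paper: both reduce via the commutative square \eqref{4.2} and the isomorphism of the left Borel map in degrees $\leq d(F)$ to a computation of the top $\iota_r^{\ast}$, which is then read off from Lemma~\ref{lemma4.2}. Your write-up is in fact more explicit than the paper's about the degree bookkeeping (the vanishing above $d(F)$ and the irrelevance of the Euler classes in degree $rd^2$), which the paper leaves implicit in the phrase ``by the choice of $r$''.
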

	
\begin{proof}
By the choice of $r$ we know that $\bigoplus_{j = 0}^{d(F)} H^{j}\big(\text{CT}(\mathsf{G}_{rd^2}(\mathbb{R})), \mathbb{C}\big)$ is spanned by the monomials of degree $\leq d(F)$ consisting of the classes 
\[\big\{\alpha_{2p-1}^{\sigma} \mid \substack{2 \leq p \leq \frac{d(F)+1}{2}, \sigma \in \mathcal{M}^{\text{im}}_F} \big\} \cup \big\{\alpha_{2p-1}^{\sigma} \mid \substack{\text{$2 \leq p \leq \frac{d(F)+1}{2}$, $p$ is odd, and $\sigma \in \mathcal{M}^{\text{re}}_F$}}\big\}.\]
Thus, the first part follows Lemma~\ref{lemma4.2} and the diagram \eqref{4.2}. The second part is an easy consequence of the first part. 
\end{proof}
	
If either $d$ is odd or $F$ is totally imaginary, then there is no $\varepsilon$-class in the trivial cohomology, and the inclusion in the proposition above is an equality.  
	
\subsection{$\ell$-adic coefficients}
\label{section4.2}
This subsection aims to determine the stable subspaces in rational and $\ell$-adic cohomology using Proposition~\ref{proposition4.3}. For $j \geq 2$ we choose a free summand $K_{2j-1}(\mathcal{O}_F)_{\text{fr}}$ of $K_{2j-1}(\mathcal{O}_F)$ so that $K_{2j-1}(\mathcal{O}_F) = K_{2j-1}(\mathcal{O}_F)_{\text{fr}} \oplus K_{2j-1}(\mathcal{O}_F)_{\text{tor}}$. Set $\text{rank}_{\mathbb{Z}} K_{2j-1}(\mathcal{O}_F) = R_j$ and let $\{a_{2j-1}^{(m)} \mid 1 \leq m \leq R_j\}$ be a $\mathbb{Z}$-basis of $K_{2j-1}(\mathcal{O}_F)_{\text{fr}}$. Observe that 
	\[R_j = \begin{cases}
		\lvert \mathcal{M}_{F}^{\text{im}} \rvert, & \text{$j$ is even;}\\
		\lvert \mathcal{M}_{F}^{\text{re}} \rvert + \lvert \mathcal{M}_{F}^{\text{im}} \rvert, & \text{$j$ is odd.}
	\end{cases}\]
Recall the modified Hurewicz map $\overline{\text{Hur}}_{2j-1, \mathbb{Q}}$ from Section~\ref{section2.2} and define 
\[ [a_{2j-1}^{(m)}] := \overline{\text{Hur}}_{2j-1,\mathbb{Q}}(a_{2j-1}^{(m)} \otimes 1) \in H_{2j-1}\big(\text{SL}_{\infty}(\mathcal{O}_F), \mathbb{Q}\big). \hspace{.3cm}(\substack{1 \leq m \leq R_j})\] 
We know that $\{[a_{2j-1}^{(m)}] \mid 1 \leq m \leq R_j\}$ is a basis of $P_{2j-1}\big(\text{SL}_{\infty}(\mathcal{O}_F), \mathbb{Q}\big)$. Let $\{A^{(m)}_{2j-1} \mid 1 \leq m \leq R_j\}$ be a collection of cohomology classes that form a dual basis of $I^{2j-1}\big(\text{SL}_{\infty}(\mathcal{O}_F), \mathbb{Q}\big)$. Now using the stable Borel isomorphism and the explicit description of the cohomology algebra of the stable compact twin  $\text{CT}(\mathsf{G}_{\infty}(\mathbb{R}))$ one sees that $H^{\ast}\big(\text{SL}_{\infty}(\mathcal{O}_F), \mathbb{C}\big)$ is a free exterior algebra generated by \[\big\{\alpha_{2j-1}^{\sigma} \mid \substack{j \geq 2, \sigma \in \mathcal{M}^{\text{im}}_F} \big\} \cup \big\{\alpha_{2j-1}^{\sigma} \mid \substack{\text{$j \geq 2$, $j$ is odd,} \\ \text{and $\sigma \in \mathcal{M}^{\text{re}}_F$}}\big\}\] 
where we are using the same notation for a class and its image under the Borel map. For brevity write $\Sigma_{j} = \mathcal{M}_{F}^{\text{im}}$
if $j$ is even and	$\mathcal{M}_{F}^{\text{re}} \cup \mathcal{M}_{F}^{\text{im}}$ if $j$ is odd. Observe that $\{\alpha_{2j-1}^{\sigma} \mid \sigma \in \Sigma_{j}\}$ gives rise to another basis of $I^{2j-1}\big(\text{SL}_{\infty}(\mathcal{O}_F), \mathbb{C}\big)$. Comparing with the $A$-basis described above we discover, given $N \geq 1$ the $\mathbb{C}$-subalgebra generated by $\{A^{(m)}_{2j-1} \mid \substack{1 \leq m \leq R_j, 2 \leq j \leq N}\}$ equals the the subalgebra generated by $\{\alpha^{\sigma}_{2j-1} \mid \substack{2 \leq j \leq N, \sigma \in \Sigma_{j}}\}$. In particular, the rational cohomology $H^{\ast}\big(\text{SL}_{\infty}(\mathcal{O}_F), \mathbb{Q}\big)$ is a free exterior $\mathbb{Q}$-algebra generated by the rational classes $\bigcup_{j = 2}^{\infty}\{A^{(m)}_{2j-1} \mid 1 \leq m \leq R_j\}$.

\begin{lemma}
\label{lemma4.4}
The stable subspace $H^{\ast}\big(Y(\mathsf{K}_f), \mathbb{Q}\big)_{\emph{st}}$ is a free exterior algebra generated by the classes $\big\{\iota_{r}^{\ast}(A_{2j-1}^{(m)}) \mid \substack{1 \leq m \leq R_j,\; 2 \leq j \leq d} \big\}$. In particular $H^{\ast}\big(Y(\mathsf{K}_f), \mathbb{Q}\big)_{\emph{st}}$ is contained in $H^{\ast}\big(Y(\mathsf{K}_f), \mathbb{Q}\big)_{\emph{tr}}$. 
\end{lemma}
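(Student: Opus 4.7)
The plan is to deduce the lemma from its complex-coefficient analogue (Proposition~\ref{proposition4.3}) by a scalar-extension argument that exploits the fact that the rational classes $\{A^{(m)}_{2j-1} \otimes 1\}_{m}$ and the complex classes $\{\alpha^{\sigma}_{2j-1}\}_{\sigma \in \Sigma_j}$ are two bases of the same $\mathbb{C}$-vector space $I^{2j-1}(\text{SL}_{\infty}(\mathcal{O}_F), \mathbb{C})$.

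First I would show that $H^{\ast}(Y(\mathsf{K}_f), \mathbb{Q})_{\text{st}}$ is generated as a $\mathbb{Q}$-algebra by $\{\iota_{r}^{\ast}(A^{(m)}_{2j-1})\}_{j \geq 2,\, 1 \leq m \leq R_j}$. By van der Kallen stability and the choice of $r$, the restriction map $H^{\ast}(\text{SL}_{\infty}(\mathcal{O}_F), \mathbb{Q}) \to H^{\ast}(\text{SL}_{rd^2}(\mathcal{O}_F), \mathbb{Q})$ is an isomorphism in degrees $\leq d(F) = \dim Y(\mathsf{K}_f)$, while $H^{\ast}(Y(\mathsf{K}_f), \mathbb{Q})$ vanishes above degree $d(F)$ by dimension reasons. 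Since $H^{\ast}(\text{SL}_{\infty}(\mathcal{O}_F), \mathbb{Q})$ is a free $\mathbb{Q}$-exterior algebra on the $A$-classes, the claim follows.

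Next I would trim the generating set to the range $2 \leq j \leq d$. Combining diagram~\eqref{4.2} with Lemma~\ref{lemma4.2}, the pullback $\iota_{r}^{\ast}$ sends the Borel image of $\alpha^{\sigma}_{2j-1}$ in $H^{\ast}(\text{SL}_{rd^2}(\mathcal{O}_F), \mathbb{C})$ to $rd \cdot \alpha^{\sigma}_{2j-1} \in H^{\ast}(Y(\mathsf{K}_f), \mathbb{C})$ for $2 \leq j \leq d$, and to zero for $j \geq d+1$. Each $A^{(m)}_{2j-1} \otimes 1$ is a $\mathbb{C}$-linear combination of the $\alpha^{\sigma}_{2j-1}$, so $\iota_{r}^{\ast}(A^{(m)}_{2j-1}) \otimes 1 = 0$ in $H^{\ast}(Y(\mathsf{K}_f), \mathbb{C})$ for $j \geq d+1$; by the injectivity of the scalar-extension map $H^{\ast}(Y(\mathsf{K}_f), \mathbb{Q}) \hookrightarrow H^{\ast}(Y(\mathsf{K}_f), \mathbb{C})$, the rational class also vanishes in this range.

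It remains to establish freeness and the inclusion in the trivial subspace. After extending scalars to $\mathbb{C}$, the subspace of $H^{2j-1}(Y(\mathsf{K}_f), \mathbb{C})$ spanned by $\{\iota_{r}^{\ast}(A^{(m)}_{2j-1}) \otimes 1\}_m$ coincides with the one spanned by $\{\alpha^{\sigma}_{2j-1}\}_{\sigma \in \Sigma_j}$; Proposition~\ref{proposition4.3} then identifies $H^{\ast}(Y(\mathsf{K}_f), \mathbb{Q})_{\text{st}} \otimes \mathbb{C}$ with the free $\mathbb{C}$-exterior algebra on the latter, hence also on the former. A Hilbert-series comparison (or equivalently faithful flatness of $\mathbb{C}/\mathbb{Q}$) shows that the canonical surjection from the free $\mathbb{Q}$-exterior algebra on the symbols $\iota_{r}^{\ast}(A^{(m)}_{2j-1})$ onto $H^{\ast}(Y(\mathsf{K}_f), \mathbb{Q})_{\text{st}}$ is an isomorphism, yielding freeness over $\mathbb{Q}$. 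Finally, the inclusion $H^{\ast}(Y(\mathsf{K}_f), \mathbb{Q})_{\text{st}} \subseteq H^{\ast}(Y(\mathsf{K}_f), \mathbb{Q})_{\text{tr}}$ is immediate from the rationality isomorphism~\eqref{3.5} together with the complex-coefficient inclusion of Proposition~\ref{proposition4.3}. The main subtlety is the bookkeeping between the rational $A$-generators and the complex $\alpha$-generators, which is handled by the fact that both span $I^{2j-1}$ over their respective fields and are related by an invertible change of basis upon complexification.
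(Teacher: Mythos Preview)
Your approach is essentially the same scalar-extension argument as the paper's, and the overall structure is sound. There is, however, one imprecision in your trimming step that you should fix.

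You claim that ``each $A^{(m)}_{2j-1} \otimes 1$ is a $\mathbb{C}$-linear combination of the $\alpha^{\sigma}_{2j-1}$'' and therefore $\iota_r^{\ast}(A^{(m)}_{2j-1})=0$ for $j \geq d+1$. This equality of individual classes holds only in $I^{2j-1}$, i.e.\ modulo decomposables; as actual cohomology classes the $A^{(m)}_{2j-1}$ may carry decomposable terms (products of lower-degree $\alpha$'s), and those need not be killed by $\iota_r^{\ast}$. So the literal vanishing $\iota_r^{\ast}(A^{(m)}_{2j-1})=0$ is not established and can fail. What \emph{is} true---and is precisely what the paper records just before the lemma---is the equality of \emph{subalgebras}: for every $N$, the $\mathbb{C}$-subalgebra generated by $\{A^{(m)}_{2j-1}\}_{2\le j\le N}$ coincides with that generated by $\{\alpha^{\sigma}_{2j-1}\}_{2\le j\le N}$. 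Applying this with $N=d$ and pushing through $\iota_r^{\ast}$ shows that the images $\iota_r^{\ast}(A^{(m)}_{2j-1})$ for $2\le j\le d$ already generate $H^{\ast}(Y(\mathsf{K}_f),\mathbb{C})_{\mathrm{st}}$ by Proposition~\ref{proposition4.3}; the high-$j$ classes are then redundant (they lie in this subalgebra), which is all you need. With that correction, your Hilbert-series/faithful-flatness step and your inclusion-in-trivial step go through, and the argument matches the paper's proof: injectivity of $\iota_r^{\ast}$ on the $\alpha$-subalgebra (Lemma~\ref{lemma4.2}) gives freeness, and the dimension comparison with $H^{\ast}(Y(\mathsf{K}_f),\mathbb{C})_{\mathrm{st}}$ gives equality.
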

	
\begin{proof}
Lemma~\ref{lemma4.2} implies that the bottom arrow of \eqref{4.2} is injective on the subalgebra generated by $\{\alpha^{\sigma}_{2j-1} \mid \substack{2 \leq j \leq d, \sigma \in \Sigma_{j}}\}$. Thus $H^{\ast}\big(Y(\mathsf{K}_f), \mathbb{Q}\big)_{\text{st}}$ contains the free exterior algebra generated by the classes $\big\{\iota_{r}^{\ast}(A_{2j-1}^{(m)}) \mid \substack{1 \leq m \leq R_j, 2 \leq j \leq d} \big\}$. Moreover the $\mathbb{C}$-span of this subalgebra equals the whole of $H^{\ast}\big(Y(\mathsf{K}_f), \mathbb{C}\big)_{\text{st}}$. Therefore, $H^{\ast}\big(Y(\mathsf{K}_f), \mathbb{Q}\big)_{\text{st}}$ must be equal to the subalgebra generated by the pushforwards of $A$-classes listed above. To prove the second part one notes that $H^{\ast}\big(Y(\mathsf{K}_f), \mathbb{Q}\big) \cap H^{\ast}\big(Y(\mathsf{K}_f), \mathbb{C}\big)_{\text{tr}} = H^{\ast}\big(Y(\mathsf{K}_f), \mathbb{Q}\big)_{\text{tr}}$ and applies Proposition~\ref{proposition4.3}.  
\end{proof}
	
We also note down a precise relation between the stable and trivial cohomology necessary for later discussion. Recall that by Lemma~\ref{lemma3.9} each $\varepsilon$-class lies in $H^{\ast}\big(Y(\mathsf{K}_f), \mathbb{Q}\big)_{\text{tr}}$. For a subset $\mathcal{J} \subseteq \mathcal{M}_{F,d}^{\text{ur}}$, write $\varepsilon_{\mathcal{J}} = \prod_{\sigma \in \mathcal{J}} \varepsilon^{\sigma}_d$. 
	
\begin{lemma}
\label{lemma4.5}
We have
\[H^{\ast}\big(Y(\mathsf{K}_f), \mathbb{Q}\big)_{\emph{tr}} = \oplus_{\mathcal{J} \subseteq \mathcal{M}^{\emph{ur}}_{F,d}} \varepsilon_{\mathcal{J}} H^{\ast}\big(Y(\mathsf{K}_f), \mathbb{Q}\big)_{\emph{st}}.\]  
\end{lemma}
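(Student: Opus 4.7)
The plan is to deduce the lemma by descending the explicit decomposition over $\mathbb{C}$ (obtained from the tensor product description of the trivial cohomology in Section~\ref{section3.4}) to $\mathbb{Q}$, using the already-established rational structures on each of the three relevant pieces.

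First I would recall the three rational integrality inputs that make descent possible:
\begin{itemize}
\item By \eqref{3.5}, $H^{\ast}\big(Y(\mathsf{K}_f), \mathbb{Q}\big)_{\emph{tr}} \otimes_{\mathbb{Q}} \mathbb{C} \cong H^{\ast}\big(Y(\mathsf{K}_f), \mathbb{C}\big)_{\emph{tr}}$.
\item By Lemma~\ref{lemma4.4}, $H^{\ast}\big(Y(\mathsf{K}_f), \mathbb{Q}\big)_{\emph{st}} \otimes_{\mathbb{Q}} \mathbb{C} \cong H^{\ast}\big(Y(\mathsf{K}_f), \mathbb{C}\big)_{\emph{st}}$, and moreover this stable subspace is contained in the trivial subspace.
\item By Lemma~\ref{lemma3.9}, each class $\varepsilon_{d}^{\sigma}$ (hence each product $\varepsilon_{\mathcal{J}}$) lies in $H^{\ast}\big(Y(\mathsf{K}_f), \mathbb{Q}\big)_{\emph{tr}}$.
\end{itemize}
Combined, these imply that every subspace $\varepsilon_{\mathcal{J}}H^{\ast}\big(Y(\mathsf{K}_f), \mathbb{Q}\big)_{\emph{st}}$ is a well-defined $\mathbb{Q}$-subspace of $H^{\ast}\big(Y(\mathsf{K}_f), \mathbb{Q}\big)_{\emph{tr}}$, so the right-hand side of the claimed identity makes sense as a sum of rational subspaces inside the rational trivial cohomology.

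Next I would establish the identity over $\mathbb{C}$. The tensor product description recalled in Section~\ref{section3.4} expresses $H^{\ast}\big(Y(\mathsf{K}_f), \mathbb{C}\big)_{\emph{tr}}$ as the product of the $\alpha$-exterior algebra (which by Proposition~\ref{proposition4.3} is precisely $H^{\ast}\big(Y(\mathsf{K}_f), \mathbb{C}\big)_{\emph{st}}$) with $\Delta_{\mathbb{C}}[\varepsilon_{d}^{\sigma} \mid \sigma \in \mathcal{M}_{F,d}^{\emph{ur}}]$. Since the $\varepsilon_{\mathcal{J}}$ for $\mathcal{J} \subseteq \mathcal{M}_{F,d}^{\emph{ur}}$ form a $\mathbb{C}$-basis of the second factor, this tensor product decomposition reads as the direct sum
\[
H^{\ast}\big(Y(\mathsf{K}_f), \mathbb{C}\big)_{\emph{tr}} = \bigoplus_{\mathcal{J} \subseteq \mathcal{M}_{F,d}^{\emph{ur}}} \varepsilon_{\mathcal{J}} \, H^{\ast}\big(Y(\mathsf{K}_f), \mathbb{C}\big)_{\emph{st}}.
\]

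Finally I would descend by faithful flatness. Consider the natural $\mathbb{Q}$-linear map
\[
\Phi \colon \bigoplus_{\mathcal{J} \subseteq \mathcal{M}_{F,d}^{\emph{ur}}} H^{\ast}\big(Y(\mathsf{K}_f), \mathbb{Q}\big)_{\emph{st}} \longrightarrow H^{\ast}\big(Y(\mathsf{K}_f), \mathbb{Q}\big)_{\emph{tr}}, \qquad (x_{\mathcal{J}})_{\mathcal{J}} \mapsto \sum_{\mathcal{J}} \varepsilon_{\mathcal{J}} \, x_{\mathcal{J}},
\]
which is well defined by the first step. Tensoring with $\mathbb{C}$ over $\mathbb{Q}$ and invoking the three rational comparisons listed above identifies $\Phi \otimes \mathbb{C}$ with the analogous map in $\mathbb{C}$-cohomology, which is an isomorphism by the preceding paragraph. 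Since $\mathbb{C}$ is faithfully flat over $\mathbb{Q}$, $\Phi$ is itself an isomorphism, yielding the claimed direct sum decomposition. The only mildly subtle point, and the one I would check carefully, is ensuring that the tensor product structure really yields a direct sum decomposition with $\varepsilon_{\mathcal{J}}$ as basis over the stable factor (as opposed to, say, hidden relations among products of $\varepsilon$-classes); this is immediate once one notes that $\Delta_{\mathbb{C}}[\varepsilon_{d}^{\sigma}]$ is by its very definition free on the monomials $\varepsilon_{\mathcal{J}}$.
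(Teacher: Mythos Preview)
Your proof is correct and follows essentially the same approach as the paper's: the paper simply observes that the right-hand side is contained in the left-hand side and that its complexification equals the full complex trivial subspace, hence equality holds over $\mathbb{Q}$. Your version makes this descent explicit via the map $\Phi$ and faithful flatness, but the underlying argument is identical.
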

	
\begin{proof}
Note that the RHS is contained in the LHS and it generates the full trivial subspace over $\mathbb{C}$. Therefore, it must equal the whole rational trivial subspace. 
\end{proof}
	
Lemma~\ref{lemma4.4} and Lemma~\ref{lemma4.5} together provides a $\mathbb{Q}$-basis $\mathcal{B}$ for $H^{\ast}\big(Y(\mathsf{K}_f), \mathbb{Q}\big)_{\text{tr}}$ consisting monomials generated by \[\big\{\iota_{r}^{\ast}(A_{2j-1}^{(m)}) \mid \substack{1 \leq m \leq R_j, 2 \leq j \leq d} \big\} \cup \{\varepsilon_{d}^{\sigma} \mid \sigma \in \mathcal{M}_{F,d}^{\text{ur}}\}.\] 
Recall that $H^{\ast}\big(Y(\mathsf{K}_f), \mathbb{Q}_{\ell}\big)_{\text{tr}} = H^{\ast}\big(Y(\mathsf{K}_f), \mathbb{Q}\big)_{\text{tr}} \otimes \mathbb{Q}_{\ell}$ for each $\ell$. We choose a finite collection of primes $S_0$ so that for each $\ell \notin S_0$ the following holds: 
\begin{enumerate}[label=(\roman*), align=left]
\item The discrete group $\text{SL}_{rd^2}(\mathcal{O}_F)$ has no $\ell$-torsion. 
\item The cohomology groups $H^{\ast}\big(Y(\mathsf{K}_f), \mathbb{Z}\big)$ and $\oplus_{j=1}^{d(F)} H^{j}\big(\text{SL}_{\infty}(\mathcal{O}_F), \mathbb{Z}\big)$ have no $\ell$-torsion. Moreover, the $\mathbb{Q}$-basis of $\oplus_{j=1}^{d(F)} H^{j}\big(\text{SL}_{\infty}(\mathcal{O}_F), \mathbb{Q}\big)$ consisting of the $A$-monomials descends to a $\mathbb{Z}_{\ell}$-basis of $\oplus_{j=1}^{d(F)} H^{j}\big(\text{SL}_{\infty}(\mathcal{O}_F), \mathbb{Z}_{\ell}\big)$.
\item The rational classes in $\mathcal{B}$ lie inside $H^{\ast}\big(Y(\mathsf{K}_f), \mathbb{Z}_{\ell}\big) \subseteq H^{\ast}\big(Y(\mathsf{K}_f), \mathbb{Q}_{\ell}\big)$ and the $\mathbb{Z}_{\ell}$-span of $\mathcal{B}$ is a direct summand of $H^{\ast}\big(Y(\mathsf{K}_f), \mathbb{Z}_{\ell}\big)$.  
\end{enumerate}
	
Let $\ell$ be a rational prime outside $S_0$. Note that \[H^{\ast}\big(Y(\mathsf{K}_f), \mathbb{Z}_{\ell}\big)_{\text{tr}} = H^{\ast}\big(Y(\mathsf{K}_f), \mathbb{Z}_{\ell}\big) \cap H^{\ast}\big(Y(\mathsf{K}_f), \mathbb{Q}_{\ell}\big)_{\text{tr}}.\] Therefore, by (iii) above, $H^{\ast}\big(Y(\mathsf{K}_f), \mathbb{Z}_{\ell}\big)_{\text{tr}}$ must equal the free summand generated by $\mathcal{B}$. The trivial submodule is automatically a $\mathbb{Z}_{\ell}$-subalgebra since both the modules in the RHS are closed under the cup product. Now suppose $\mathcal{B}_{\text{st}}$ is the subcollection of $\mathcal{B}$ consisting of monomials generated by $\iota_{r}^{\ast}(A^{(m)}_{2j-1})$-s. The second condition on $S_0$ ensures that $\mathcal{B}_{\text{st}}$ is a $\mathbb{Z}_{\ell}$-basis of $H^{\ast}\big(Y(\mathsf{K}_f), \mathbb{Z}_{\ell}\big)_{\text{st}}$. Moreover, there is a direct sum decomposition    
\begin{equation}
\label{4.3}
H^{\ast}\big(Y(\mathsf{K}_f), \mathbb{Z}_{\ell}\big)_{\text{tr}} = \oplus_{\mathcal{J} \subseteq \mathcal{M}^{\text{ur}}_{F,d}} \varepsilon_{\mathcal{J}} H^{\ast}\big(Y(\mathsf{K}_f), \mathbb{Z}_{\ell}\big)_{\text{st}}
\end{equation}
with $\varepsilon_{\mathcal{J}} = \prod_{\sigma \in \mathcal{J}} \varepsilon^{\sigma}_d$ generalizing Lemma~\ref{lemma4.5} to the $\ell$-adic coefficients.
	
Next, we use the description of the $\ell$-adic cohomology to determine the cohomology with coefficients in $\mathbb{Z}/\ell^n$ for each $n \geq 1$. The condition (ii) above immediately verifies the change of coefficients isomorphism for the stable subspace:  
\[H^{\ast}\big(Y(\mathsf{K}_f), \mathbb{Z}/\ell^n\big)_{\text{st}} \cong H^{\ast}\big(Y(\mathsf{K}_f), \mathbb{Z}_{\ell}\big)_{\text{st}} \otimes_{\mathbb{Z}_{\ell}} \mathbb{Z}/\ell^n. \]
To deduce the statement for the trivial submodule, one needs to make a more careful argument since the image of a nonzero $\mathbb{Z}_{\ell}$-linear operator may vanish after the mod $\ell^n$ reduction. Suppose that $T$ is a double coset Hecke operator supported at a good place $v$. Set $\widetilde{T} := T - \deg(T)$. Let $H^{\ast}\big(Y(\mathsf{K}_{f}), \mathbb{Q}\big) = K_{T}^{\ast} \oplus I_{T}^{\ast}$ denote the degree wise \textit{Fitting decomposition} for the endomorphism $\widetilde{T}$, i.e., $\widetilde{T}$ is nilpotent on $K_T^{\ast}$ and invertible on $I_T^{\ast}$. Now suppose $L^{\ast}_{K,T}$, resp. $L^{\ast}_{I,T}$, is a graded lattice inside $K_T^{\ast}$, resp. $I_{T}^{\ast}$. Set $L_{T}^{\ast} = L_{K,T}^{\ast} \oplus L_{I,T}^{\ast}$. We choose a finite set of nonarchimedean places $S_{T}$ containing $S_0$ and the rational prime below $v$ so that for each $\ell \notin S_{T}$ the following holds: 
\begin{itemize}
\item $L_{T}^{\ast} \otimes \mathbb{Z}_{\ell} = H^{\ast}\big(Y(\mathsf{K}_{f}), \mathbb{Z}_{\ell}\big) \subseteq H^{\ast}\big(Y(\mathsf{K}_{f}), \mathbb{Q}_{\ell}\big)$.
\item The action of $\widetilde{T}$ on $H^{\ast}\big(Y(\mathsf{K}_{f}), \mathbb{Z}_{\ell}\big)$ maps $L^{\ast}_{I,T} \otimes \mathbb{Z}_{\ell}$ onto $L^{\ast}_{I,T} \otimes \mathbb{Z}_{\ell}$.   
\end{itemize} 
In particular, $H^{\ast}\big(Y(\mathsf{K}_{f}), \mathbb{Z}_{\ell}\big) = L_{K,T}^{\ast} \otimes \mathbb{Z}_{\ell} \oplus L_{I,T}^{\ast} \otimes \mathbb{Z}_{\ell}$
is the Fitting decomposition with respect to $\widetilde{T}$. Let $\{T_1, \cdots, T_n\}$ be a finite collection of distinct double coset Hecke operators so that 
\begin{equation*}
H^{\ast}\big(Y(\mathsf{K}_{f}), \mathbb{Q}\big)_{\text{tr}} = \bigcap_{j=1}^{n} K_{T_j}^{\ast} \subsetneq \bigcap_{j=1}^{n-1} K_{T_j}^{\ast} \subsetneq \cdots \subsetneq K_{T_1}^{\ast} \cap K_{T_2}^{\ast} \subsetneq K_{T_1}^{\ast}.
\end{equation*}
Here $\text{rank}_{\mathbb{Z}} \bigcap_{r=1}^{n} L_{K,T_j} = \text{dim}_{\mathbb{Q}} H^{\ast}\big(Y(\mathsf{K}_{f}), \mathbb{Q}\big)_{\text{tr}} = \lvert \mathcal{B} \rvert$. Assume that $\ell \notin S_{T_1} \cup \cdots \cup S_{T_n}$. Then $\mathbb{Z}_{\ell}\mathcal{B} \subseteq H^{\ast}\big(Y(\mathsf{K}_{\emph{f}}), \mathbb{Z}_{\ell}\big)_{\text{tr}} \subseteq \big(\bigcap_{j=1}^{n} L_{K,T_j}^{\ast}\big) \otimes \mathbb{Z}_{\ell}$. Since $\mathbb{Z}_{\ell}\mathcal{B}$ is a direct summand of the cohomology, it follows that all three modules above must be equal. Therefore, $H^{\ast}\big(Y(\mathsf{K}_{\emph{f}}), \mathbb{Z}_{\ell}\big)_{\text{tr}} = \big(\bigcap_{j=1}^{n} L_{K,T_j}^{\ast}\big) \otimes \mathbb{Z}_{\ell}$. Observe that \[H^{\ast}\big(Y(\mathsf{K}_{f}), \mathbb{Z}/\ell^n\big) = L_{K,T}^{\ast} \otimes \mathbb{Z}/\ell^n \oplus L_{I,T}^{\ast} \otimes \mathbb{Z}/\ell^n \hspace{.3cm}(\substack{n \geq 1})\]
is the Fitting decomposition of the cohomology as $\mathbb{Z}/\ell^n[\widetilde{T}]$-module. It follows that $H^{\ast}\big(Y(\mathsf{K}_{\emph{f}}), \mathbb{Z}/\ell^n\big)_{\text{tr}} = \big(\bigcap_{j=1}^{n} L_{K,T_j}^{\ast}\big) \otimes \mathbb{Z}/\ell^n = \mathbb{Z}/\ell^n\mathcal{B}$. In particular, the change of coefficients induces an isomorphism   
\begin{equation}
\label{4.4}
H^{\ast}\big(Y(\mathsf{K}_f), \mathbb{Z}/\ell^n\big)_{\text{tr}} \cong H^{\ast}\big(Y(\mathsf{K}_f), \mathbb{Z}_{\ell}\big)_{\text{tr}} \otimes_{\mathbb{Z}_{\ell}} \mathbb{Z}/\ell^n
\end{equation}
and \eqref{4.3} holds for $\mathbb{Z}/\ell^n$-coefficients also. In sequel, we fix the Hecke operators $\{T_1, \ldots, T_n\}$ and a finite subset $S_1 := S_{T_1} \cup \cdots \cup S_{T_n}$ containing $S_0$ so that the change of coefficients isomorphism \eqref{4.4} holds for each $\ell \notin S_1$.
	
\section{Determination of congruence classes}
\label{section5}
Let the notation be in Section~\ref{section3.4} and $\ell$ be a fixed prime. Suppose that $v$ is a good place of $\mathsf{K}_f$ so that $q_{v} \equiv 1($mod $\ell^n)$ for some $n \geq 1$. As before we fix an isomorphism $K_{v} \cong \text{SL}_{d}(\mathcal{O}_{v})$ using the maximal order $\mathcal{O}_{\mathsf{D}}$. Here the natural quotient map $\text{SL}_{d}(\mathcal{O}_{v}) \twoheadrightarrow \text{SL}_{d}(\mathbb{F}_v)$ induces an isomorphism $H^{\ast}\big(\text{SL}_d(\mathbb{F}_v), \mathbb{Z}/\ell^n\big) \cong H^{\ast}\big(\text{SL}_d(\mathcal{O}_{v}), \mathbb{Z}/\ell^n\big)$. Therefore, it suffices to analyze the following congruence class map:  
\[\text{cong}_{v,n}: H^{\ast}\big(\text{SL}_d(\mathbb{F}_v), \mathbb{Z}/\ell^n\big) \to H^{\ast}\big(Y(\mathsf{K}_f), \mathbb{Z}/\ell^n\big).\]
The first goal of this section is to show that the image of $\text{cong}_{v,n}$ lies inside the stable subspace of $Y(\mathsf{K}_f)$ for large $\ell$. Recall that by choice of the integral structure on $\mathsf{D}^{\oplus r}$ we have $\iota_{r}\big(\text{SL}_d(\mathcal{O}_v)\big) \subseteq \text{SL}_{rd^2}(\mathcal{O}_{v})$ and the induced map $\iota_r: \text{SL}_d(\mathcal{O}_v) \to \text{SL}_{rd^2}(\mathcal{O}_v)$ coincides with the closed embedding of topological groups arising from the left multiplication action of $\text{SL}_d(\mathcal{O}_{v})$ on $M_{d}(\mathcal{O}_{v})^{\oplus r}$. One can reduce $\iota_{r}$ modulo $v$ to obtain injective homomorphism of finite groups
\[\iota_{r}: \text{SL}_{d}(\mathbb{F}_v) \to \text{SL}_{rd^2}(\mathbb{F}_v)\]
that equals the map induced by the left multiplication action of $\text{SL}_d(\mathbb{F}_v)$ on $M_{d}(\mathbb{F}_v)^{\oplus r}$. Let $\ell$ be a prime that is coprime to the orders of torsion elements of $\text{SL}_{rd^2}(\mathcal{O}_F)$. For the group cohomology calculations, one must also assume that $\ell > rd^2$.
The discussion above yields a commutative diagram 
\begin{equation}
\label{5.1}
\begin{tikzcd}
H^{*}\big(\text{SL}_{rd^2}(\mathbb{F}_{v}), \mathbb{Z}/\ell^n\big) \arrow[r, "\iota_{r}^{\ast}"]\arrow[d] & H^{*}\big(\text{SL}_{d}(\mathbb{F}_{v}), \mathbb{Z}/\ell^n\big)\arrow[d, "\text{cong}_{v,n}"]\\
H^{*}\big(\text{SL}_{rd^2}(\mathcal{O}_{F}), \mathbb{Z}/\ell^n\big) \arrow[r, "\iota_{r}^{*}"] & H^{*}\big(Y(\mathsf{K}_{f}), \mathbb{Z}/\ell^n\big)  
\end{tikzcd}
\end{equation}
where the left vertical arrow arises from the reduction map $\text{SL}_{rd^2}(\mathcal{O}_F) \to \text{SL}_{rd^2}(\mathbb{F}_v)$. If one views $\bar{B}\big(\text{SL}_{rd^2}(\mathcal{O}_F)\big)$ as an orbifold, then the diagram above expresses nothing but the compatibility of congruence class maps for covering of orbifolds. To prove that the image of $\text{cong}_{v,n}$ lies inside the stable subspace, it is enough to check that the top arrow of the diagram above is surjective. One proves this result through a detailed study of the cohomology of finite groups of Lie type following the work of Quillen and Venkatesh. We first explain how to reduce the problem above to mod $\ell$ coefficients. Note that this result fails to hold without the hypothesis $q \equiv 1($mod $\ell^n)$.  
	
\begin{lemma}
\label{lemma5.1}	
Let $N$ be a positive integer, $\ell$ be a prime $> N$, and $q = 1($\emph{mod} $\ell^n)$ for some positive integer $n \geq 1$. Suppose that $1 \leq m \leq n$. Then the change of coefficient map attached to $\mathbb{Z}/\ell^n \twoheadrightarrow \mathbb{Z}/\ell^m$ induces an isomorphism 
\[H^{\ast}\big(\emph{SL}_N(\mathbb{F}_q), \mathbb{Z}/\ell^n\big) \otimes \mathbb{Z}/\ell^m \cong H^{\ast}\big(\emph{SL}_N(\mathbb{F}_q), \mathbb{Z}/\ell^m\big).\]   
\end{lemma}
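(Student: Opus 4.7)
The plan is to show that each cohomology group $H^{i}\big(\text{SL}_N(\mathbb{F}_q), \mathbb{Z}/\ell^n\big)$ is in fact a \emph{free} $\mathbb{Z}/\ell^n$-module in every degree; once this is established, the asserted coefficient-reduction isomorphism is automatic, since both sides become free $\mathbb{Z}/\ell^m$-modules of equal rank and the natural map between them is visibly surjective. I would extract the freeness from a Kunneth computation after transferring the cohomology of $\text{SL}_N(\mathbb{F}_q)$ to its $\ell$-Sylow subgroup, which under the given hypotheses turns out to sit inside a maximal torus.

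First I would locate the Sylow explicitly. Let $T_0 \subseteq G := \text{SL}_N(\mathbb{F}_q)$ denote the diagonal maximal torus, with Weyl group $W = S_N$, and set $b = v_\ell(q-1)$, so $b \geq n$ by assumption. Since $\ell > N$, the lifting-the-exponent lemma yields $v_\ell(q^i - 1) = b$ for every $1 \leq i \leq N$, and hence the $\ell$-part of $|G|$ coincides with the $\ell$-part of $|T_0|$; consequently $T_{0,\ell} \cong (\mathbb{Z}/\ell^b)^{N-1}$ is a genuine $\ell$-Sylow of $G$, the index $[G : T_0]$ is an $\ell$-unit, and $|W| = N!$ is invertible in $\mathbb{Z}/\ell^n$.

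Next I would invoke the transfer-restriction identity together with the standard stable-element description for finite groups with an abelian $\ell$-Sylow to obtain $H^{\ast}(G, \mathbb{Z}/\ell^k) \cong H^{\ast}(T_{0,\ell}, \mathbb{Z}/\ell^k)^W$ for each $k \geq 1$. The target admits a transparent description via Kunneth combined with the cyclic-group calculation
\[H^{\ast}(\mathbb{Z}/\ell^b, \mathbb{Z}/\ell^k) \cong \Lambda_{\mathbb{Z}/\ell^k}[x] \otimes \text{Sym}_{\mathbb{Z}/\ell^k}[y] \hspace{.3cm}(\deg x = 1,\; \deg y = 2),\]
valid whenever $b \geq k$ and $\ell$ is odd (automatic here since $N \geq 2$ forces $\ell \geq 3$). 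This exhibits $H^{\ast}(T_{0,\ell}, \mathbb{Z}/\ell^k)$ as a free $\mathbb{Z}/\ell^k$-module in each degree, and shows that the reduction $\mathbb{Z}/\ell^n \twoheadrightarrow \mathbb{Z}/\ell^m$ induces $H^{\ast}(T_{0,\ell}, \mathbb{Z}/\ell^n) \otimes_{\mathbb{Z}/\ell^n} \mathbb{Z}/\ell^m \xrightarrow{\cong} H^{\ast}(T_{0,\ell}, \mathbb{Z}/\ell^m)$.

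To finish, because $|W|$ is a unit in $\mathbb{Z}/\ell^n$, the averaging idempotent $|W|^{-1}\sum_{w \in W} w$ exhibits $(-)^W$ as a direct summand on $\mathbb{Z}/\ell^n[W]$-modules, so the invariants functor is exact and commutes with $-\otimes_{\mathbb{Z}/\ell^n} \mathbb{Z}/\ell^m$. Stringing the isomorphisms together,
\[H^{\ast}(G, \mathbb{Z}/\ell^n) \otimes \mathbb{Z}/\ell^m \cong H^{\ast}(T_{0,\ell}, \mathbb{Z}/\ell^n)^W \otimes \mathbb{Z}/\ell^m \cong H^{\ast}(T_{0,\ell}, \mathbb{Z}/\ell^m)^W \cong H^{\ast}(G, \mathbb{Z}/\ell^m)\]
establishes the lemma. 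The step I expect to be most delicate is the identification $H^{\ast}(G, \mathbb{Z}/\ell^k) \cong H^{\ast}(T_{0,\ell}, \mathbb{Z}/\ell^k)^W$: while injectivity of restriction is immediate from $[G:T_0]$ being an $\ell$-unit, equality with the full Weyl-invariant subalgebra requires the abelian-Sylow stable-element theorem and the fact that the fusion of $T_{0,\ell}$ inside $G$ is controlled by $N_G(T_{0,\ell})/C_G(T_{0,\ell}) = W$. Both hypotheses are essential at this step: $\ell > N$ ensures $\ell \nmid |W|$, and $q \equiv 1 \pmod{\ell^n}$ is what forces the $\ell$-Sylow to lie inside a torus, failing which the comparison collapses.
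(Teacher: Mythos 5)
Your proof is correct and takes essentially the same route as the paper: reduce to the Weyl-invariant subalgebra of the torus cohomology, compute the latter via the cyclic-group calculation and K\"unneth (which shows both freeness over $\mathbb{Z}/\ell^n$ and compatibility with coefficient reduction, using $b \geq n \geq m$), and use invertibility of $|W|$ to make the invariants functor commute with base change. The one substantive difference is that you reprove the identification $H^{\ast}\big(\text{SL}_N(\mathbb{F}_q), \mathbb{Z}/\ell^k\big) \cong H^{\ast}(T_{0,\ell}, \mathbb{Z}/\ell^k)^W$ from first principles --- locating the abelian $\ell$-Sylow inside the split torus by lifting the exponent, identifying $N_G(T_{0,\ell})/C_G(T_{0,\ell})$ with $W$, and invoking Burnside fusion control together with the Cartan--Eilenberg stable-element theorem --- whereas the paper cites this directly as Lemma~3.7 of \cite{venkatesh}, so your version is more self-contained but otherwise parallel. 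One small misstatement in your opening paragraph: freeness of $H^{\ast}(G, \mathbb{Z}/\ell^n)$ alone does not make the coefficient-reduction map ``visibly surjective'' (surjectivity is not a formal consequence of freeness of source and target); what actually establishes the isomorphism is your closing chain of identifications through the torus, so the substance of the argument is unaffected.
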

	
\begin{proof}
Let $\mathcal{A}'$ be the standard diagonal torus of $\text{SL}_N$. Then an application of Lemma~3.7 in \cite{venkatesh} shows that the restriction gives rise to an isomorphism 
\[H^{\ast}\big(\text{SL}_N(\mathbb{F}_q), \mathbb{Z}/\ell^m\big) \cong H^{\ast}\big(\mathcal{A}'(\mathbb{F}_q), \mathbb{Z}/\ell^m\big)^{W}\]
for each $1 \leq m \leq n$ where $W$ is the Weyl group of $\text{SL}_N$. Observe that $\mathcal{A}'(\mathbb{F}_q)$ is a product of $\mathbb{F}_q^{\times}$ and $H^{\ast}(\mathbb{F}_q^{\times}, \mathbb{Z}/\ell^n) \otimes \mathbb{Z}/\ell^m \cong H^{\ast}(\mathbb{F}_q^{\times}, \mathbb{Z}/\ell^m)$. Moreover, $\lvert W \rvert$ is coprime to $\ell$, and one can retrieve the $W$-fixed submodule by the standard averaging projection. Therefore, the assertion is an easy consequence of the isomorphism above.   
\end{proof}
	
The second lemma helps us to reduce the problem to the general linear group using the same technique. 
	
\begin{lemma}
\label{lemma5.2}
Let the notation be as Lemma~\ref{lemma5.1}. Then the restriction map 
\[H^{\ast}\big(\emph{GL}_{N}(\mathbb{F}_q), \mathbb{Z}/\ell^m\big) \to H^{\ast}\big(\emph{SL}_{N}(\mathbb{F}_q), \mathbb{Z}/\ell^m\big)\]
is surjective. 
\end{lemma}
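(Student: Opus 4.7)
The plan is to mirror the strategy of the proof of Lemma~\ref{lemma5.1} and reduce the statement to a comparison of cohomology of diagonal tori. Applying Lemma~3.7 of \cite{venkatesh} to both split reductive groups $\text{GL}_N$ and $\text{SL}_N$ (each has Weyl group $S_N$, whose order $N!$ is coprime to $\ell$ since $\ell > N$), restriction to the standard diagonal torus furnishes isomorphisms
\begin{equation*}
H^{\ast}\big(\text{GL}_N(\mathbb{F}_q), \mathbb{Z}/\ell^m\big) \xrightarrow{\sim} H^{\ast}\big(\mathcal{A}(\mathbb{F}_q), \mathbb{Z}/\ell^m\big)^W, \quad H^{\ast}\big(\text{SL}_N(\mathbb{F}_q), \mathbb{Z}/\ell^m\big) \xrightarrow{\sim} H^{\ast}\big(\mathcal{A}'(\mathbb{F}_q), \mathbb{Z}/\ell^m\big)^W,
\end{equation*}
where $\mathcal{A}$ denotes the diagonal torus of $\text{GL}_N$, $\mathcal{A}' = \mathcal{A} \cap \text{SL}_N$, and $W = S_N$. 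These identifications are natural in the inclusion $\text{SL}_N \hookrightarrow \text{GL}_N$, so the claim reduces to showing surjectivity of the induced restriction $H^{\ast}\big(\mathcal{A}(\mathbb{F}_q), \mathbb{Z}/\ell^m\big)^W \to H^{\ast}\big(\mathcal{A}'(\mathbb{F}_q), \mathbb{Z}/\ell^m\big)^W$.

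Next, I would exploit the fact that the short exact sequence of abelian groups
\begin{equation*}
1 \to \mathcal{A}'(\mathbb{F}_q) \to \mathcal{A}(\mathbb{F}_q) \xrightarrow{\text{mult}} \mathbb{F}_q^{\times} \to 1
\end{equation*}
splits, e.g.\ via $a \mapsto (a, 1, \ldots, 1)$, producing an isomorphism $\mathcal{A}(\mathbb{F}_q) \cong \mathcal{A}'(\mathbb{F}_q) \times \mathbb{F}_q^{\times}$. The Künneth theorem then gives a (non-$W$-equivariant) decomposition
\begin{equation*}
H^{\ast}\big(\mathcal{A}(\mathbb{F}_q), \mathbb{Z}/\ell^m\big) \cong H^{\ast}\big(\mathcal{A}'(\mathbb{F}_q), \mathbb{Z}/\ell^m\big) \otimes H^{\ast}\big(\mathbb{F}_q^{\times}, \mathbb{Z}/\ell^m\big),
\end{equation*}
under which the canonical (and $W$-equivariant) restriction to $\mathcal{A}'$ corresponds to the augmentation on the $\mathbb{F}_q^{\times}$-factor. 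Any $y \in H^{\ast}\big(\mathcal{A}'(\mathbb{F}_q), \mathbb{Z}/\ell^m\big)$ lifts to $y \otimes 1$, so the restriction is a surjection of $\mathbb{Z}/\ell^m[W]$-modules.

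The final step is to promote this surjectivity to the $W$-invariant subspaces. Since $\ell \nmid |W| = N!$, averaging by $|W|^{-1}\sum_{w \in W} w$ realises $(-)^W$ as a direct summand of the identity functor, so taking $W$-invariants is exact on $\mathbb{Z}/\ell^m[W]$-modules. Applied to the $W$-equivariant surjection from the preceding paragraph, this yields the desired surjectivity $H^{\ast}\big(\mathcal{A}(\mathbb{F}_q), \mathbb{Z}/\ell^m\big)^W \twoheadrightarrow H^{\ast}\big(\mathcal{A}'(\mathbb{F}_q), \mathbb{Z}/\ell^m\big)^W$, which by the first paragraph is equivalent to the statement of the lemma. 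The main subtlety is that no $W$-equivariant section of the multiplication map is available in general: the only canonical candidate, the diagonal $a \mapsto (a, a, \ldots, a)$, has image in $N$-th powers and hence fails to split the multiplication map. The exactness of $W$-invariants precisely bypasses the need for an equivariant splitting, and it is for this step that the hypothesis $\ell > N$ is essential.
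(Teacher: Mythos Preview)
Your proof is correct and follows essentially the same approach as the paper: both reduce to the diagonal tori via Lemma~3.7 of \cite{venkatesh}, establish surjectivity of $H^{\ast}(\mathcal{A}(\mathbb{F}_q),\mathbb{Z}/\ell^m)\to H^{\ast}(\mathcal{A}'(\mathbb{F}_q),\mathbb{Z}/\ell^m)$ by exhibiting a splitting of the inclusion $\mathcal{A}'\hookrightarrow\mathcal{A}$, and then pass to $W$-invariants using $\ell\nmid|W|$. The only cosmetic difference is that the paper produces the splitting via the projection $\pi:\mathcal{A}(\mathbb{F}_q)\to(\mathbb{F}_q^{\times})^{N-1}$ onto the first $N-1$ coordinates (which restricts to an isomorphism on $\mathcal{A}'(\mathbb{F}_q)$), whereas you use the section $a\mapsto(a,1,\ldots,1)$ together with K\"unneth.
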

	
\begin{proof}
Let $\mathcal{A}$ denote the standard diagonal torus of $\text{GL}_{N}$ and set $\mathcal{A}' = \mathcal{A} \cap \text{SL}_N$. Then restriction yields a commutative diagram
\begin{equation}
\label{5.2}
\begin{tikzcd}
H^{\ast}\big(\text{GL}_{N}(\mathbb{F}_q), \mathbb{Z}/\ell^m\big) \arrow[r] \arrow[d] & H^{\ast}\big(\mathcal{A}(\mathbb{F}_q), \mathbb{Z}/\ell^m\big)\arrow[d]\\
H^{\ast}\big(\text{SL}_{N}(\mathbb{F}_q), \mathbb{Z}/\ell^m\big) \arrow[r] & H^{\ast}\big(\mathcal{A}'(\mathbb{F}_q), \mathbb{Z}/\ell^m\big)
\end{tikzcd}
\end{equation}
where the horizontal arrows are injective and surjective onto the Weyl fixed subspace of the cohomology of the corresponding maximal torus. Since $\lvert W \rvert$ is coprime to $\ell$, it suffices to check that the right arrow of \eqref{5.2} is surjective. If $N = 1$, then $\mathcal{A}'(\mathbb{F}_q)$ is trivial, and there is nothing to prove. Assume that $N \geq 2$. Let $\pi: \mathcal{A}(\mathbb{F}_q) \to \mathbb{F}_{v}^{\times (N-1)}$ denote the projection onto first $N-1$ coordinates. Then $\pi$ maps $\mathcal{A}'(\mathbb{F}_q)$ isomorphically onto $\mathbb{F}_{q}^{\times(N-1)}$. But $\pi^{\ast}: H^{\ast}\big(\mathbb{F}_{q}^{\times(N-1)}, \mathbb{Z}/\ell^m\big) \to H^{\ast}\big(\mathcal{A}'(\mathbb{F}_q), \mathbb{Z}/\ell^m\big)$ factors through the right arrow of \eqref{5.2}. In particular, the right arrow is onto.
\end{proof}
	
The final lemma verifies the surjectivity statement for the general linear group with mod $\ell$-coefficients. 
	
\begin{lemma}
\label{lemma5.3}
Let the notation be as in \eqref{5.1}. The pullback homomorphism  
\[\iota_{r}^{\ast}: H^{\ast}\big(\emph{GL}_{rd^2}(\mathbb{F}_v), \mathbb{Z}/\ell\big) \to H^{\ast}\big(\emph{GL}_{d}(\mathbb{F}_v), \mathbb{Z}/\ell\big)\]
is surjective.  
\end{lemma}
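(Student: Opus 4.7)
The strategy is to reduce the problem to a matrix calculation governed by Quillen's description of the cohomology. The key observation is that, as a left $\text{GL}_d(\mathbb{F}_v)$-module, $M_d(\mathbb{F}_v)^{\oplus r}$ splits into $rd$ copies of the standard representation $\mathbb{F}_v^d$ (decompose each copy of $M_d(\mathbb{F}_v)$ column by column). After a suitable change of basis in $\mathbb{F}_v^{rd^2}$, the embedding $\iota_r$ is therefore conjugate inside $\text{GL}_{rd^2}(\mathbb{F}_v)$ to the block-diagonal map
\[
\Delta_{rd}: \text{GL}_d(\mathbb{F}_v) \to \text{GL}_{rd^2}(\mathbb{F}_v), \qquad X \mapsto \mathrm{diag}(X, X, \ldots, X)
\]
with $rd$ identical blocks. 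Since inner automorphisms act trivially on group cohomology, it suffices to establish surjectivity of $\Delta_{rd}^{\ast}$.

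\textbf{Truncated Quillen presentation and Hopf comultiplication.} Under the hypotheses $q_v \equiv 1\,(\mathrm{mod}\,\ell)$, so that $o_\ell(q_v) = 1$, and $\ell > rd^2 \geq N$ for $N \in \{d, rd^2\}$, the Weyl-invariant-on-the-torus argument of Lemma~\ref{lemma5.1} and Lemma~\ref{lemma5.2}, combined with Quillen's computation \eqref{2.1} at the stable level, identifies
\[
H^{\ast}\big(\text{GL}_N(\mathbb{F}_v), \mathbb{Z}/\ell\big) \cong \text{Sym}_{\mathbb{Z}/\ell}[c_1, \ldots, c_N] \otimes \Lambda_{\mathbb{Z}/\ell}[e_1, \ldots, e_N],
\]
with $\deg c_j = 2j$ and $\deg e_j = 2j-1$, compatibly with the standard stabilization $\text{GL}_d \hookrightarrow \text{GL}_{rd^2}$. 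To compute $\Delta_{rd}^{\ast}$, factor $\Delta_{rd}$ as $\text{GL}_d \xrightarrow{\delta} (\text{GL}_d)^{rd} \xrightarrow{\oplus} \text{GL}_{rd^2}$, where $\delta$ is the iterated diagonal and $\oplus$ is block direct sum. By naturality with respect to the stabilization maps, $\oplus^{\ast}$ agrees with the $rd$-fold Hopf comultiplication $\Delta^{(rd)}$ attached to the $H$-space structure on $BGL_{\infty}(\mathbb{F}_v)^{+}$ given by Whitney sum. For any positive-degree class $x$ in the target, the coalgebra axioms yield
\[
\Delta^{(rd)}(x) = \sum_{a=1}^{rd} 1 \otimes \cdots \otimes x \otimes \cdots \otimes 1 \; + \; \Xi,
\]
where the $a$-th summand has $x$ in the $a$-th tensor factor and $\Xi$ is a sum of tensors whose each Künneth component is supported on at least two positive-degree factors. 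Applying $\delta^{\ast}$ (iterated cup product) therefore yields
\[
\Delta_{rd}^{\ast}(x) \; = \; rd \cdot x \; + \; (\text{decomposable in } H^{\ast}(\text{GL}_d(\mathbb{F}_v), \mathbb{Z}/\ell)).
\]

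\textbf{Inductive lift and conclusion.} Since $\ell > rd^2 \geq rd$, the scalar $rd$ is a unit in $\mathbb{Z}/\ell$. I induct on cohomological degree. In the lowest positive degrees $1$ and $2$ there are no positive-degree decomposable classes, so the formula above directly places $e_1$ and $c_1$ in the image of $\Delta_{rd}^{\ast}$. For the inductive step, assume all generators of degree $< n$ lie in the image; then every decomposable class in degree $n$ does as well. If $x \in \{c_j, e_j : 1 \leq j \leq d\}$ has degree $n$, applying the displayed identity and the invertibility of $rd$ lifts $x$ itself to the image modulo already-handled decomposables. Hence all algebra generators of $H^{\ast}(\text{GL}_d(\mathbb{F}_v), \mathbb{Z}/\ell)$ lie in the image of $\iota_r^{\ast} = \Delta_{rd}^{\ast}$, which proves surjectivity. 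The main technical obstacle is the compatibility claim in the middle paragraph: one must verify that the pullback $\oplus^{\ast}$ at the finite level $(\text{GL}_d)^{rd} \to \text{GL}_{rd^2}$ really is the restriction of the stable Hopf comultiplication, which relies on the naturality of Quillen's generators with respect to the block-stabilization inclusions and the fact that $\ell > rd^2$ keeps both sides inside the stability range identified in Step~2.
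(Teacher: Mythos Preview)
Your proof is correct and takes essentially the same approach as the paper: both recognize that the representation underlying $\iota_r$ is $rd$ copies of the standard one, derive the formula $\iota_r^{\ast}(x) = rd \cdot x + (\text{decomposable})$ for each generator $x$, and conclude by induction on degree using the invertibility of $rd$ modulo $\ell$. The paper phrases the middle step via Quillen's addition formulas for the characteristic classes $c_j(E), e_j(E)$ (stated directly at the finite level in Quillen's paper) rather than via the stable Hopf comultiplication, which has the minor advantage of avoiding the finite-versus-stable compatibility check you flag at the end.
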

	
\begin{proof}
We use Quillen's theory of characteristic classes for the finite groups \cite{quillen} to perform the computation. Let $E$ be the representation of $\text{GL}_{d}(\mathbb{F}_v)$ defined by the homomorphism $\iota_{r}: \text{GL}_{d}(\mathbb{F}_v) \to \text{GL}_{rd^2}(\mathbb{F}_v)$. Then 
\[\iota_{r}^{\ast}\big(c_{j}(\mathbb{F}_v^{\oplus rd^2})\big) = c_j(E), \hspace{.3cm}\iota_{r}^{\ast}\big(e_{j}(\mathbb{F}_v^{\oplus rd^2})\big) = e_j(E). \hspace{.2cm}(\substack{1 \leq j \leq rd^2})\]
The definition of $\iota_{r}$ shows that $[E] = rd[\mathbb{F}_v^{\oplus d}]$ in the Grothendieck group of $\text{GL}_{d}(\mathbb{F}_v)$. Now, the addition formulas for the characteristic classes in the cohomology of the homotopy fiber (\textit{loc. cit.}, p.563) imply  
\begin{equation*}
c_{j}(E) = rd c_j(\mathbb{F}_v^{d}) + c^{\text{dec}}_{j}(E),\; e_{j}(E) = rd e_j(\mathbb{F}_v^{d}) + e^{\text{dec}}_{j}(E)\hspace{.3cm}(\substack{1 \leq j \leq d})
\end{equation*}
where $c^{\text{dec}}_{j}(E)$ and $e^{\text{dec}}_{j}(E)$ are zero if $j = 1$ and, in general, decomposable elements lying in the subalgebra generated by the elements $\{c_{m}(\mathbb{F}_v^d), e_{m}(\mathbb{F}_v^d) \mid 1 \leq m < j\}$. Since $rd$ is an unit in $\mathbb{Z}/\ell$ a recursive argument shows that $\{c_{j}(\mathbb{F}_v^d), e_{j}(\mathbb{F}_v^d) \mid 1 \leq j \leq  d\}$ lies in the image of $\iota_{r}^{\ast}$. Therefore $\iota_{r}^{\ast}$ is surjective.    
\end{proof}
		
We summarize the full discussion as follows: 
	
\begin{proposition}
\label{proposition5.4}
Let $\ell$ be a prime $> rd^2$ so that $\emph{SL}_{rd^2}(\mathcal{O}_F)$ has no $\ell$-torsion. Suppose that $v$ is a good place of $\mathsf{K}_f$ that satisfies $q_v \equiv 1($\emph{mod} $\ell^n)$. Then the top arrow of \eqref{5.1} is onto. In particular, the image of the congruence class map 
\[\emph{cong}_{v,n}: H^{\ast}\big(\emph{SL}_d(\mathbb{F}_v), \mathbb{Z}/\ell^n\big) \to H^{\ast}\big(Y(\mathsf{K}_f), \mathbb{Z}/\ell^n\big)\]
lies inside the stable subspace $H^{\ast}\big(Y(\mathsf{K}_f), \mathbb{Z}/\ell^n\big)_{\emph{st}}$. 
\end{proposition}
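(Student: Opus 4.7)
The plan is to reduce the surjectivity of the top arrow of \eqref{5.1} to the corresponding surjectivity for the general linear group with mod $\ell$ coefficients, at which point Lemma~\ref{lemma5.3} supplies the answer; the conclusion about congruence classes then follows immediately from the commutativity of \eqref{5.1}.

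First I would reduce the coefficient ring from $\mathbb{Z}/\ell^n$ to $\mathbb{Z}/\ell$. Because $H^{j}\big(\text{SL}_{d}(\mathbb{F}_v), \mathbb{Z}/\ell^n\big)$ is a finite $\mathbb{Z}/\ell^n$-module in each degree and $\mathbb{Z}/\ell^n$ is a local ring with residue field $\mathbb{Z}/\ell$, Nakayama's lemma shows that the degree-$j$ component of $\iota_{r}^{\ast}$ on $\mathbb{Z}/\ell^n$-cohomology is surjective as soon as its reduction modulo $\ell$ is. The hypothesis $\ell > rd^2$ lets me invoke Lemma~\ref{lemma5.1} for both $N = d$ and $N = rd^2$, which identifies these reductions with the natural pullback
\[\iota_{r}^{\ast}: H^{\ast}\big(\text{SL}_{rd^2}(\mathbb{F}_v), \mathbb{Z}/\ell\big) \to H^{\ast}\big(\text{SL}_{d}(\mathbb{F}_v), \mathbb{Z}/\ell\big),\]
so it is enough to prove that this map is onto.

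Next I would pass from SL to GL via a commutative square whose rows are the pullbacks $\iota_{r}^{\ast}$ for $\text{GL}$ and $\text{SL}$ respectively, with vertical arrows the restrictions along $\text{SL}_{N} \hookrightarrow \text{GL}_{N}$ for $N = d, rd^2$. Lemma~\ref{lemma5.2} makes both vertical arrows surjective and Lemma~\ref{lemma5.3} makes the top $\iota_{r}^{\ast}$ surjective. Given a target class $y$, I would first lift $y$ to some $\tilde{y} \in H^{\ast}\big(\text{GL}_{d}(\mathbb{F}_v), \mathbb{Z}/\ell\big)$ via Lemma~\ref{lemma5.2}, then lift $\tilde{y}$ to a class on $\text{GL}_{rd^2}(\mathbb{F}_v)$ via Lemma~\ref{lemma5.3}, and finally restrict that class to $\text{SL}_{rd^2}(\mathbb{F}_v)$; commutativity of the square will show that the resulting class is mapped to $y$ by the lower $\iota_{r}^{\ast}$.

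Finally, to deduce the statement about congruence classes, I would chase the diagram \eqref{5.1}. Any class $\beta \in H^{\ast}\big(\text{SL}_{d}(\mathbb{F}_v), \mathbb{Z}/\ell^n\big)$ is of the form $\iota_{r}^{\ast}(\alpha)$ for some $\alpha \in H^{\ast}\big(\text{SL}_{rd^2}(\mathbb{F}_v), \mathbb{Z}/\ell^n\big)$ by the surjectivity just established, so commutativity yields $\text{cong}_{v,n}(\beta) = \iota_{r}^{\ast}(\bar{\alpha})$ where $\bar{\alpha}$ denotes the image of $\alpha$ under the left vertical restriction. This exhibits $\text{cong}_{v,n}(\beta)$ as a class in the image of the bottom $\iota_{r}^{\ast}$, which by Definition~\ref{definition4.1} is precisely the stable subspace. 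The main obstacle has already been absorbed into Lemma~\ref{lemma5.3}, which rests on Quillen's description of the characteristic classes of representations of finite groups of Lie type and on the invertibility of $rd$ modulo $\ell$; everything else amounts to Nakayama-style bookkeeping and diagram chasing.
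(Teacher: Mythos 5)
Your proposal is correct and follows essentially the same route as the paper: reduce to mod $\ell$ coefficients via Lemma~\ref{lemma5.1} and Nakayama's lemma, use Lemma~\ref{lemma5.2} and Lemma~\ref{lemma5.3} in a commutative square to deduce surjectivity for $\text{SL}$, and conclude from the commutativity of \eqref{5.1}. The only difference is that you spell out the diagram chase and the Nakayama reduction in more detail than the paper's terse statement.
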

	
\begin{proof}
We use Lemma~\ref{lemma5.2} and Lemma~\ref{lemma5.3} to see that the mod $\ell$ pullback map 
\[\iota_{r}^{\ast}: H^{\ast}\big(\text{SL}_{rd^2}(\mathbb{F}_v), \mathbb{Z}/\ell\big) \to H^{\ast}\big(\text{SL}_{d}(\mathbb{F}_v), \mathbb{Z}/\ell\big)\]
is onto. Now, the first part of the assertion follows from Lemma~\ref{lemma5.1} and Nakayama's lemma. The second part is an easy consequence of the first part and the commutativity of \eqref{5.1}. 
\end{proof}	
	
Let $\ell$ be as in the statement of Proposition~\ref{proposition5.4}. Set $\text{Cong}_{n}$ to be the subalgebra generated by the images of the homomorphisms 
\[\big\{\text{cong}_{v,n} \mid \text{$v$ is good for $\mathsf{K}_f$ and $q_v \equiv 1($mod $\ell^n)$}\big\}. \hspace{.3cm}(\substack{n \geq 1})\] 	
We already know that $\text{Cong}_{n} \subseteq H^{\ast}\big(Y(\mathsf{K}_f), \mathbb{Z}/\ell^n\big)_{\text{st}}$. Proposition~\ref{proposition1.2} in the introduction asserts that this inclusion is equality for large enough $\ell$. Our strategy to prove the equality statement involves constructing actual congruence classes using a basis of the $K$-group. For this purpose, one needs to know the behavior of the elements in $K$-groups under the reduction map
\[\Pi_{j,v}: K_{2j-1}(\mathcal{O}_F) \otimes \mathbb{Z}/\ell \to K_{2j-1}(\mathbb{F}_v) \otimes \mathbb{Z}/\ell. \hspace{.3cm}(\substack{j \geq 2})\] 
We initiate the discussion with a few notations that facilitate the presentation of the main result. A subset $\mathcal{B}$ of an abelian group $A$ is \textit{$\ell$-linearly independent} if $\{x \otimes 1 \lvert x \in \mathcal{B}\}$ is $\mathbb{Z}/\ell$-linearly independent subset of $A \otimes \mathbb{Z}/\ell$. Suppose that $\mathcal{J}$ is a nonempty finite subset of $\mathbb{Z}_{\geq 2}$.  For $j \in \mathcal{J}$ let $\mathcal{B}_j$ be a $\ell$-linearly independent subset of $K_{2j-1}(\mathcal{O}_F)$. Set $\mathcal{B} = \amalg_{j \in \mathcal{J}} \mathcal{B}_j$. Suppose now that $\mathcal{S}$ is a subset of $\mathcal{B}$. Write $\mathcal{S}_j = \mathcal{S} \cap \mathcal{B}_j$. Our goal is to determine the density of places $v$ with $q_v \equiv 1($mod $\ell^n)$ so that $\Pi_{j,v}$ maps the elements of $\mathcal{S}_j$ to nonzero and the elements of $\mathcal{B}_j - \mathcal{S}_j$ to zero for each $j \in \mathcal{J}$. Define 
\begin{equation*} 
\mathcal{X}^{\ell,n}_{\mathcal{S}, \mathcal{B}, \mathcal{J}} = \bigcap_{j \in \mathcal{J}}\Big\{\text{$v \in \mathcal{M}_{F}^{\text{fin}}$ and $q_v \equiv 1($mod $\ell^n)$} \,\Bigl\lvert \,\substack{\Pi_{j,v}(\alpha \otimes 1) \neq 0, \forall \alpha \in \mathcal{S}_j;\\ 
\Pi_{j,v}(\alpha \otimes 1) = 0, \forall \alpha \in \mathcal{B}_j - \mathcal{S}_j}\Big\}.\hspace{.2cm}(\substack{n \geq 1})
\end{equation*}  	
Our main result regarding the reduction map (see \cite{reduction}) is as follows: 
	
\begin{theorem}
\label{theorem5.5} 
Put $J = \max \mathcal{J}$. Let $\ell$ be a prime $>  J +1$ so that $\ell$ does not ramify in $F$. Then 
\[\delta(\mathcal{X}^{\ell,n}_{\mathcal{S}, \mathcal{B},\mathcal{J}}) = \frac{(\ell-1)^{\lvert \mathcal{S}\rvert}}{\ell^{\lvert \mathcal{B} \rvert}}\frac{1}{\ell^{n-1}(\ell-1)}.\]
Here, $\delta(\cdot)$ refers to the natural density of a subset of primes.  \end{theorem}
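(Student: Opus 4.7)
The plan is to translate the $K$-theoretic reduction map into a Galois cohomological statement via étale Chern classes and then apply the Chebotarev density theorem to a suitably chosen finite Galois extension of $F$. First I would invoke Soulé's theorem on étale Chern classes: for $\ell > j + 1$ and $j \geq 2$, the mod $\ell$ Chern class furnishes an isomorphism
$$K_{2j-1}(\mathcal{O}_F) \otimes \mathbb{Z}/\ell \cong H^1_{\mathrm{\acute{e}t}}(\mathcal{O}_F[1/\ell], \mathbb{Z}/\ell(j)),$$
and an analogous isomorphism $K_{2j-1}(\mathbb{F}_v) \otimes \mathbb{Z}/\ell \cong H^1(G_v, \mathbb{Z}/\ell(j))$ at each finite place $v$ of residue characteristic different from $\ell$. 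By naturality, the reduction map $\Pi_{j,v}$ is identified with restriction $H^1(G_{F,S}, \mathbb{Z}/\ell(j)) \to H^1(G_v, \mathbb{Z}/\ell(j))$, where $S$ is a finite set of primes containing those above $\ell$; the hypothesis that $\ell$ is unramified in $F$ allows this set to be chosen uniformly.

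Next, for a prime $v \notin S$ satisfying $q_v \equiv 1 \pmod{\ell^n}$, the decomposition group $G_v \cong \widehat{\mathbb{Z}} \cdot \mathrm{Frob}_v$ acts trivially on $\mathbb{Z}/\ell(j)$ because $\mu_{\ell^n} \subseteq \mathbb{F}_v$. Hence $H^1(G_v, \mathbb{Z}/\ell(j)) \cong \mathbb{Z}/\ell$ via evaluation at Frobenius, and the condition $\Pi_{j,v}(\alpha \otimes 1) = 0$, respectively $\neq 0$, translates into $\alpha(\mathrm{Frob}_v) = 0$, respectively $\neq 0$, after restricting the Chern class $\alpha$ to $G_{F(\zeta_{\ell^n})}$, where it becomes a genuine homomorphism into $\mathbb{Z}/\ell$.

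The key step, and the main obstacle, is a linear independence assertion: the combined restriction of $\bigsqcup_{j \in \mathcal{J}} \mathcal{B}_j$ to $G_{F(\zeta_{\ell^n})}$ produces $|\mathcal{B}|$ genuinely linearly independent elements of $\mathrm{Hom}(G_{F(\zeta_{\ell^n})}, \mathbb{Z}/\ell)$. For a single $j$, the inflation–restriction sequence combined with the vanishing $H^1(\mathrm{Gal}(F(\zeta_{\ell^n})/F), \mathbb{Z}/\ell(j)) = 0$, which holds because the mod $\ell$ cyclotomic character $\chi$ satisfies $\chi^j \neq 1$ for $2 \leq j \leq J < \ell - 1$, shows that restriction is injective within each twist, so $\mathcal{B}_j$ stays $\ell$-linearly independent downstairs. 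Across different $j$'s, the images land in distinct $\chi^j$-isotypic components under the residual $\mathrm{Gal}(F(\zeta_{\ell^n})/F)$-action on $\mathrm{Hom}(G_{F(\zeta_{\ell^n})}, \mathbb{Z}/\ell)$, which are pairwise disjoint once the exponents $j$ remain distinct modulo $\ell - 1$; this is exactly where the hypothesis $\ell > J + 1$ is essential.

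Finally, let $L$ denote the compositum of $F(\zeta_{\ell^n})$ with the $(\mathbb{Z}/\ell)^{|\mathcal{B}|}$-extension of $F(\zeta_{\ell^n})$ cut out by $\mathcal{B}$, which is Galois over $F$ by construction. Away from the finite set of primes that ramify in $L/F$, membership of $v$ in $\mathcal{X}^{\ell, n}_{\mathcal{S}, \mathcal{B}, \mathcal{J}}$ becomes a two-layer condition on $\mathrm{Frob}_v \in \mathrm{Gal}(L/F)$: it must restrict to the identity on $\mathrm{Gal}(F(\zeta_{\ell^n})/F)$, and under the identification $\mathrm{Gal}(L/F(\zeta_{\ell^n})) \cong (\mathbb{Z}/\ell)^{|\mathcal{B}|}$ provided by the previous step, its coordinates must be nonzero on $\mathcal{S}$ and zero on $\mathcal{B} - \mathcal{S}$. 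Counting such group elements gives $(\ell - 1)^{|\mathcal{S}|}$ admissible Frobenius classes out of a total of $\ell^{|\mathcal{B}|} \cdot \ell^{n-1}(\ell - 1)$, and the Chebotarev density theorem yields the claimed density
$$\delta(\mathcal{X}^{\ell,n}_{\mathcal{S}, \mathcal{B}, \mathcal{J}}) = \frac{(\ell - 1)^{|\mathcal{S}|}}{\ell^{|\mathcal{B}|}} \cdot \frac{1}{\ell^{n-1}(\ell - 1)}.$$
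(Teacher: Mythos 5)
The paper does not prove Theorem~\ref{theorem5.5} internally but relegates it to the cited reference \cite{reduction}, giving only a one-paragraph sketch: Soul\'e's first Chern class map to translate into Galois cohomology with coefficients in $\mathbb{Z}/\ell(j)$, analysis of the reduction map via standard Galois theory, and Chebotarev applied to a suitable extension. Your proposal follows exactly this roadmap and fills in the intermediate steps correctly (trivialization of $\mathbb{Z}/\ell(j)$ over $F(\zeta_{\ell^n})$, injectivity of restriction via inflation--restriction and the non-vanishing of $\chi^j$ for $2 \leq j \leq J < \ell - 1$, disjointness of the $\chi^j$-isotypic components for distinct twists, and the count of admissible Frobenius classes in the compositum), so this is essentially the same proof.
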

	
This theorem provides a quantitative generalization of the central observation of Arlettaz and Banaszak (Proposition~1 in Section~2 of \cite{crelle1995}) for a collection of $\ell$-linearly independent elements. Note that a place $v$ satisfying $q_v \equiv 1(\text{mod }\ell^n)$ lies in $\mathcal{X}^{\ell,n}_{\mathcal{S}, \mathcal{B}, \mathcal{J}}$ if and only if the tuple \[(\prescript{}{\ell}{\Pi}_{2j-1,v}(\alpha \otimes 1))_{j \in \mathcal{J}, \alpha \in \mathcal{B}_j}\] takes value in $(\mathbb{Z}/\ell)^{\times \lvert \mathcal{S} \lvert} \times \{0\}^{\lvert \mathcal{B} - \mathcal{S}\rvert} \subseteq (\mathbb{Z}/\ell)^{\lvert \mathcal{B}\rvert}$ for a suitable ordering of coordinates. Now the density of $\{v \mid q_v \equiv 1(\text{mod }\ell^n)\}$ equals $\frac{1}{\ell^{n-1}(\ell-1)}$. The theorem above asserts that the ratio of the densities of  $\mathcal{X}^{\ell,n}_{\mathcal{S}, \mathcal{B},\mathcal{J}}$ and $\{v \mid q_v \equiv 1(\text{mod }\ell^n)\}$ equals the naive probability of an element of $(\mathbb{Z}/{\ell})^{\lvert \mathcal{B} \rvert}$ lying in $(\mathbb{Z}/\ell)^{\times \lvert \mathcal{S} \lvert} \times \{0\}^{\lvert \mathcal{B} - \mathcal{S}\rvert}$. In particular, a collection of $\ell$-linearly independent elements exhibits mutually independent reduction patterns. Our main idea for proving Theorem~\ref{theorem5.5} is to employ Soul\'e's first Chern class map to transform the problem into a question about reduction of Galois cohomology classes with coefficients in the modules $\mathbb{Z}/\ell(j)$. Then, the argument proceeds through an analysis of the reduction map for Galois cohomology classes, utilizing standard techniques from Galois theory, and the final result is a consequence of the Chebotarev density theorem applied to a suitable extension. However, for the current article, it suffices to know that $\mathcal{X}^{\ell,n}_{\mathcal{S}, \mathcal{B},\mathcal{J}}$ is infinite.

Let \[\mathcal{J} = \{2, \ldots, d\}, \hspace{.3cm} \mathcal{B} = \{a_{2j-1}^{(m)} \mid \substack{j \in \mathcal{J}, 1 \leq m \leq R_j}\}.\] 
For $j \in \mathcal{J}$ and $1 \leq m \leq R_j$ set $\mathcal{S}_{j,m} =\{a_{2j-1}^{(m)}\}$, and fix a good place $v_{j,m} \in \mathcal{X}^{\ell,n}_{\mathcal{S}_{j,m}, \mathcal{B},\mathcal{J}}$ for each $(j,m)$. Note that $v_{j,m} \neq v_{j',m'}$ whenever $(j,m) \neq (j', m')$. Now suppose $S_2$ is a finite subset of primes containing the set $S_1$ in Section~\ref{section4.2} so that for each $\ell \notin S_2$ the following holds: 
\begin{itemize}
\item $\ell > rd^2$ and $\ell$ does not ramify in $F$.
\item  Recall the notation $C(\cdot, \cdot)$ from Lemma~\ref{lemma2.2}. Then $\ell \nmid C(X,2j-1)$ for $X = B\text{GL}_{\infty}(\mathcal{O}_F)^{+}$ and each $j \in \mathcal{J}$.  
\end{itemize}   
The second point in the list ensures that the Hurewicz map 
\begin{equation}
\label{5.3}
\overline{\text{Hur}}_{2j-1, \mathbb{Z}/\ell}: K_{2j-1}(\mathcal{O}_F) \otimes \mathbb{Z}/\ell \to P_{2j-1}\big(\text{SL}_{\infty}(\mathcal{O}_F), \mathbb{Z}/\ell\big) \hspace{.3cm}(\substack{j \in \mathcal{J}})
\end{equation}
is an isomorphism. Suppose that $\ell$ is a prime so that $\ell \notin S_2$. For each $v_{j,m}$ as above one considers the canonical reduction $\Pi_{j,m}: \text{SL}_{\infty}(\mathcal{O}_F) \to \text{SL}_{\infty}(\mathbb{F}_{v_{j,m}})$ and its pullback in mod $\ell$ cohomology: 
\[\Pi_{j,m}^{\ast}: H^{\ast}\big(\text{SL}_{\infty}(\mathbb{F}_{v_{j,m}}), \mathbb{Z}/\ell\big) \to H^{\ast}\big(\text{SL}_{\infty}(\mathcal{O}_F), \mathbb{Z}/\ell\big).\]   	
Recall the class $\bar{e}_{j,m} \in H^{2j-1}\big(\text{SL}_{\infty}(\mathbb{F}_{v_{j,m}}), \mathbb{Z}/\ell\big)$ from Section~\ref{section2.2}. Set $\xi_{j,m} := \Pi^{\ast}_{j,m}(\bar{e}_{j,m})$ and let $\xi^{(r)}_{j,m}$ be its image in $H^{2j-1}\big(\text{SL}_{rd^2}(\mathcal{O}_F), \mathbb{Z}/\ell\big)$ under the stabilization isomorphism. Write 
\[\eta_{j,m} := \iota_{r}^{\ast}\big(\xi^{(r)}_{j,m}\big) \in H^{2j-1}\big(Y(\mathsf{K}_f), \mathbb{Z}/\ell\big).\]
If $\bar{e}_{j,m}^{(r)} \in H^{2j-1}\big(\text{SL}_{rd^2}(\mathbb{F}_{v_{j,m}}), \mathbb{Z}/\ell\big)$ is the image of $\bar{e}_{j,m}$ under stabilization then the diagram \eqref{5.1} shows that $\text{cong}_{v_{j,m}, 1}(\bar{e}_{j,m}^{(r)}) = \eta_{j,m}$. In particular $\eta_{j,m}$ is a congruence class and it lies inside $H^{\ast}\big(Y(\mathsf{K}_f), \mathbb{Z}/\ell\big)_{\text{st}}$. We next compare $\eta_{j,m}$ with the $A$-classes in Section~\ref{section4.2}.

\begin{lemma}
\label{lemma5.6}
Let $I^{\ast}\big(Y(\mathsf{K}_f), \mathbb{Z}/\ell\big)_{\emph{st}}$ denote the space of indecomposable elements attached to $H^{\ast}\big(Y(\mathsf{K}_f), \mathbb{Z}/\ell\big)_{\emph{st}}$. Then the images of $\eta_{j,m}$ and $\iota_{r}^{\ast}(A_{2j-1}^{(m)})$ in $I^{\ast}\big(Y(\mathsf{K}_f), \mathbb{Z}/\ell\big)_{\emph{st}}$ agree up to a nonzero scalar multiple.   
\end{lemma}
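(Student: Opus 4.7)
The plan is to first establish the proportionality on $\text{SL}_\infty(\mathcal{O}_F)$ with $\mathbb{Z}/\ell$ coefficients by computing explicit pairings against the primitive homology, and then to propagate the identification down to $Y(\mathsf{K}_f)$ via the stability isomorphism and the pullback $\iota_r^\ast$. The key input will be the duality between indecomposable cohomology and primitive homology with mod $\ell$ coefficients.

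First I would pair $\xi_{j,m} = \Pi_{j,m}^\ast(\bar{e}_{j,m})$ against the basis of $P_{2j-1}\bigl(\text{SL}_\infty(\mathcal{O}_F), \mathbb{Z}/\ell\bigr)$ coming from the mod $\ell$ Hurewicz map $\overline{\text{Hur}}_{2j-1, \mathbb{Z}/\ell}$. The hypothesis $\ell \nmid C(B\text{GL}_\infty(\mathcal{O}_F)^+, 2j-1)$ built into $S_2$, combined with Lemma~\ref{lemma2.2}, makes this map an isomorphism, so the free generators $a_{2j-1}^{(m')}$ descend to a $\mathbb{Z}/\ell$-basis $\{[a_{2j-1}^{(m')}]_{\mathbb{Z}/\ell}\}_{m'=1}^{R_j}$ of $P_{2j-1}$. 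Naturality of Hurewicz under the reduction $\Pi_{j,m}$ yields
\[\bigl\langle \xi_{j,m}, [a_{2j-1}^{(m')}]_{\mathbb{Z}/\ell}\bigr\rangle = \bigl\langle \bar{e}_{j,m},\, \overline{\text{Hur}}_{2j-1, \mathbb{Z}/\ell}\bigl(\Pi_{j, v_{j,m}}(a_{2j-1}^{(m')} \otimes 1)\bigr)\bigr\rangle,\]
and by the defining property of $v_{j,m} \in \mathcal{X}^{\ell,1}_{\mathcal{S}_{j,m}, \mathcal{B}, \mathcal{J}}$ the class $\Pi_{j, v_{j,m}}(a_{2j-1}^{(m')} \otimes 1)$ is nonzero iff $m' = m$. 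Corollary~\ref{corollary2.5} then forces the pairing to equal some $c_{j,m} \in (\mathbb{Z}/\ell)^\times$ when $m' = m$ and to vanish otherwise.

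Condition (ii) on $S_0$ guarantees that the rational dual basis $\{A_{2j-1}^{(m)}\}$ reduces to a $\mathbb{Z}/\ell$-dual basis $\{\bar A_{2j-1}^{(m)}\}$ of $\{[a_{2j-1}^{(m)}]_{\mathbb{Z}/\ell}\}$ inside $I^{2j-1}\bigl(\text{SL}_\infty(\mathcal{O}_F), \mathbb{Z}/\ell\bigr)$. Thus $\xi_{j,m}$ and $c_{j,m}\bar A_{2j-1}^{(m)}$ induce the same linear functional on $P_{2j-1}$, and the duality from Section~\ref{section2.1} forces their equality in $I^{2j-1}\bigl(\text{SL}_\infty(\mathcal{O}_F), \mathbb{Z}/\ell\bigr)$. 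The stability isomorphism, valid in degrees $\leq d(F)$ by the choice of $r$, then transports this equality to $I^{2j-1}\bigl(\text{SL}_{rd^2}(\mathcal{O}_F), \mathbb{Z}/\ell\bigr)$, and the ring homomorphism $\iota_r^\ast$ descends to a map of indecomposable quotients, yielding $\eta_{j,m} = c_{j,m}\,\iota_r^\ast(\bar A_{2j-1}^{(m)})$ in $I^{2j-1}(Y(\mathsf{K}_f), \mathbb{Z}/\ell)_{\text{st}}$, which is exactly the desired conclusion.

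The main delicate point is ensuring that the indecomposable-primitive pairing with $\mathbb{Z}/\ell$ coefficients on $\text{SL}_\infty(\mathcal{O}_F)$ is genuinely perfect in degree $2j-1$. This reduces to the structural fact that condition (ii) on $S_0$ exhibits the stable mod $\ell$ cohomology as a free exterior Hopf algebra on the $\bar A$-classes, so that the standard Hopf-algebraic duality between primitives and indecomposables applies; one may additionally need to enlarge $r$ beyond its value in Section~\ref{section4} to guarantee the stability range $2j-1 \leq d(F)$ for every $j \in \mathcal{J}$.
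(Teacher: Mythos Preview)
Your argument is correct and follows essentially the same route as the paper's proof: reduce to $\text{SL}_{\infty}(\mathcal{O}_F)$, use the mod $\ell$ Hurewicz isomorphism \eqref{5.3} to obtain the primitive basis $\{[a_{2j-1}^{(m')}]\}$, compute $\langle \xi_{j,m}, [a_{2j-1}^{(m')}]\rangle$ via naturality of Hurewicz under $\Pi_{j,m}$ together with Corollary~\ref{corollary2.5} and the choice of $v_{j,m}$, and conclude by duality with the $A$-basis. Your closing caveat about enlarging $r$ is unnecessary: the relevant degrees $2j-1$ for $j\in\mathcal{J}=\{2,\ldots,d\}$ either lie in the stability range $\leq d(F)$ fixed in Section~\ref{section4}, or else exceed $\dim Y(\mathsf{K}_f)=d(F)$ and the statement is vacuous.
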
	
	
\begin{proof}
Let $j \in \mathcal{J}$ and $1 \leq m \leq R_j$ be fixed. It suffices to verify that the images of $\xi_{j,m}$ and $A^{(m)}_{2j-1}$ in $I^{\ast}\big(\text{SL}_{\infty}(\mathcal{O}_F), \mathbb{Z}/\ell\big)$ agree up to a nonzero scalar multiple. The isomorphism \eqref{5.3} shows that the collection of integral homology classes $\{[a_{2j-1}^{(s)}] \mid 1 \leq s \leq R_j\}$ is a basis of $P_{2j-1}\big(\text{SL}_{\infty}(\mathcal{O}_F), \mathbb{Z}/\ell\big)$. The corresponding dual basis for the space of indecomposable elements is given by the collection $\{A_{2j-1}^{(s)} \mid 1 \leq s \leq R_j\}$. Therefore, to prove the assertion, one needs to check that $\langle [a_{2j-1}^{s}], \xi_{j,m} \rangle \neq 0$ if and only if $s = m$, where $\langle \cdot, \cdot \rangle$ refers to the pairing between primitive and indecomposable elements. But $\langle [a_{2j-1}^{s}], \xi_{j,m} \rangle = \langle \big(\Pi_{j,m}\big)_{\ast}([a_{2j-1}^{s}]), \bar{e}_{j,m} \rangle$. Now, the assertion follows from Corollary~\ref{corollary2.5} and the commutativity of the following diagram: 
\begin{equation*}
\begin{tikzcd}[column sep=large]
K_{2j-1}(\mathcal{O}_F) \otimes \mathbb{Z}/\ell \arrow[r, "\overline{\text{Hur}}_{2j-1, \mathbb{Z}/\ell}"]\arrow[d, "\Pi_{j,v_{j,m}}"] & H_{2j-1}\big(\text{SL}_{\infty}(\mathcal{O}_F), \mathbb{Z}/\ell\big)\arrow[d, "(\Pi_{j,m})_{\ast}"]\\
K_{2j-1}(\mathbb{F}_{v_{j,m}}) \otimes \mathbb{Z}/\ell \arrow[r, "\overline{\text{Hur}}_{2j-1, \mathbb{Z}/\ell}"] & H_{2j-1}\big(\text{SL}_{\infty}(\mathbb{F}_{v_{j,m}}), \mathbb{Z}/\ell\big).  
\end{tikzcd}
\end{equation*}
\end{proof}
	
\begin{corollary}
\label{corollary5.7}
The $\eta$-classes $\{\eta_{j,m} \mid \substack{j \in \mathcal{J}, 1 \leq m \leq R_j}\}$ generate the stable subspace $H^{\ast}\big(Y(\mathsf{K}_f), \mathbb{Z}/\ell\big)_{\emph{st}}$ as a free exterior algebra.  
\end{corollary}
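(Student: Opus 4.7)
The plan is to deduce the corollary from Lemma~\ref{lemma5.6} combined with the structural description of mod $\ell$ stable cohomology established in Section~\ref{section4.2}. Under our standing assumption $\ell \notin S_2$, the change-of-coefficients isomorphism together with Lemma~\ref{lemma4.4} presents $H^{\ast}\big(Y(\mathsf{K}_f), \mathbb{Z}/\ell\big)_{\text{st}}$ as a free exterior $\mathbb{Z}/\ell$-algebra on the odd-degree classes $\iota_{r}^{\ast}(A^{(m)}_{2j-1})$ for $2 \leq j \leq d$, $1 \leq m \leq R_j$. Lemma~\ref{lemma5.6} produces, for each index $(j,m)$, a unit $c_{j,m} \in (\mathbb{Z}/\ell)^{\times}$ and a decomposable element $\delta_{j,m}$ satisfying
\[
\eta_{j,m} = c_{j,m}\,\iota_{r}^{\ast}(A^{(m)}_{2j-1}) + \delta_{j,m}.
\]

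I would then prove that the $\eta$-classes generate $H^{\ast}\big(Y(\mathsf{K}_f), \mathbb{Z}/\ell\big)_{\text{st}}$ as an algebra by induction on $j$. For $j = 2$ every generator of the stable cohomology has odd degree $\geq 3$, so no nonzero decomposables exist in degree $3$; hence $\delta_{2,m} = 0$ and $\iota_r^{\ast}(A^{(m)}_3) = c_{2,m}^{-1}\eta_{2,m}$ lies in the subalgebra generated by the $\eta$-classes. For the inductive step, note that $\delta_{j,m}$ is a polynomial in generators $\iota_r^{\ast}(A^{(m')}_{2j'-1})$ with $j' < j$, which by the inductive hypothesis already belong to the subalgebra generated by the $\eta$-classes. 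Solving for the top generator yields $\iota_{r}^{\ast}(A^{(m)}_{2j-1}) = c_{j,m}^{-1}\big(\eta_{j,m} - \delta_{j,m}\big)$ in the same subalgebra, completing the induction.

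For the freeness assertion, I would consider the graded algebra homomorphism
\[
\phi: \Lambda_{\mathbb{Z}/\ell}\big[\tilde\eta_{j,m} \mid \substack{2 \leq j \leq d, 1 \leq m \leq R_j}\big] \to H^{\ast}\big(Y(\mathsf{K}_f), \mathbb{Z}/\ell\big)_{\text{st}}
\]
on abstract exterior symbols $\tilde\eta_{j,m}$ placed in degree $2j-1$, sending $\tilde\eta_{j,m} \mapsto \eta_{j,m}$. The previous paragraph shows that $\phi$ is surjective. Since the source and the target are both free exterior $\mathbb{Z}/\ell$-algebras on the same number of generators in each odd degree, they have identical Poincar\'e series, and each graded component is a finite-dimensional $\mathbb{Z}/\ell$-vector space of the same dimension. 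A surjection between finite-dimensional vector spaces of equal dimension is an isomorphism, so $\phi$ is bijective, delivering the desired free exterior presentation. The argument is essentially formal once Lemma~\ref{lemma5.6} is in hand; the genuine technical content sits in the preceding reduction-map computations and in the coefficient-change statements of Section~\ref{section4.2}, so no serious obstacle is expected in the final assembly.
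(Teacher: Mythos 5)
Your argument is correct and is essentially the same as the paper's, which simply cites Lemma~\ref{lemma5.6} and the known exterior-algebra presentation of the stable subspace as sufficient; you have spelled out the standard deduction (replacing generators of a free exterior algebra by elements congruent to unit multiples of them modulo decomposables, via induction on degree, then a Poincar\'e-series count for freeness). No gaps: the key point you used, that decomposables of degree $2j-1$ involve only generators of index $j'<j$ because every generator has degree $\geq 3$, is exactly what makes the induction close.
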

	
\begin{proof}
One knows that the classes $\{\iota_{r}^{\ast}(A_{2j-1}^{(m)}) \mid j \in \mathcal{J},\; 1 \leq m \leq R_j\}$ generate the stable subspace as a free exterior algebra. Thus, the statement is a consequence of  Lemma~\ref{lemma5.6}.  
\end{proof}
	
We next amplify these mod $\ell$ computations to arrive at Proposition~\ref{proposition1.2} in the introduction.
	
\paragraph{Proof of Proposition~\ref{proposition1.2}.} Let $S = S_2$ be the finite collection of primes constructed below Theorem~\ref{theorem5.5} and assume that $\ell \notin S$. Then, Corollary~\ref{corollary5.7} verifies that the assertion holds for $n = 1$. Let $n$ be an arbitrary positive integer. One employs Lemma~\ref{lemma5.1} to discover that given $j \in \mathcal{J}$ and $1 \leq m \leq R_j$ there exists a congruence class $\tilde{\eta}_{j,m} \in H^{2j-1}\big(Y(\mathsf{K}_f), \mathbb{Z}/\ell^n\big)_{\text{st}}$ so that the image of $\tilde{\eta}_{j,m}$ under the change of coefficients map \[H^{\ast}\big(Y(\mathsf{K}_f), \mathbb{Z}/\ell^n\big)_{\text{st}} \otimes \mathbb{Z}/\ell \cong H^{\ast}\big(Y(\mathsf{K}_f), \mathbb{Z}/\ell\big)_{\text{st}}\] equals $\eta_{j,m}$. Let $\mathcal{N}$ denote the $\mathbb{Z}/\ell^n$-subalgebra generated by the classes $\{\tilde{\eta}_{j,m} \mid \substack{j \in \mathcal{J}, 1 \leq m \leq R_j}\}$. Note that the image of $\mathcal{N}$ under the isomorphism above equals $H^{\ast}\big(Y(\mathsf{K}_f), \mathbb{Z}/\ell\big)_{\text{st}}$. Now an application of Nakayama's lemma shows that $\mathcal{N} = \text{Cong}_n = H^{\ast}\big(Y(\mathsf{K}_f), \mathbb{Z}/\ell^n\big)_{\text{st}}$. \hfill $\square$

\section{Structure theorems for the derived Hecke action}
\label{section6}
We now assemble the pieces of the theory and provide proof of the main results announced in the introduction.    
	
\subsection{Proof of Theorem~\ref{theorem1.3} and Corollary~\ref{corollary1.4}}
\label{section6.1}
Let $\mathsf{D}$ be a centrally simple division algebra over a number field $F$ and set $\mathsf{G} = \text{SL}_1(\mathsf{D})$. Recall the subset of finite places $S_2$ from Section~\ref{section5}. Let $S = S_2$ as in the proof of Proposition~\ref{proposition1.2} and $\ell$ be a prime number with $\ell \notin S$. By construction, $\ell > d$ and hence does not divide the size of the Weyl group of $\mathsf{G}$. Therefore, $\mathbb{T}'_{\ell}$ is graded commutative and preserves the Hecke-trivial summand of cohomology. Suppose that $\mathbb{T}'_{\ell, \text{tr}}$ is the image of $\mathbb{T}_{\ell}'$ in the endomorphism ring of $H^{\ast}\big(Y(\mathsf{K}_{f}), \mathbb{Z}_{\ell}\big)_{\text{tr}}$.  
	
\begin{lemma}
\label{lemma6.1}
Let the notation be as above. Suppose that $h \in \mathbb{T}_{\ell, \emph{tr}}'$. Then $h(\mathds{1}) \in H^{\ast}\big(Y(\mathsf{K}_f), \mathbb{Z}_{\ell}\big)_{\emph{st}}$. Moreover, \[\text{$h(x) = x \cup h(\mathds{1})$ for each $x \in H^{\ast}\big(Y(\mathsf{K}_f), \mathbb{Z}_{\ell}\big)_{\emph{st}}$.}\]  
\end{lemma}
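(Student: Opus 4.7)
The plan is to establish both assertions modulo $\ell^n$ for every $n \geq 1$ and then pass to the inverse limit. Every element $h \in \mathbb{T}'_\ell$ can be written as $h = \varprojlim_n h_n$ with $h_n \in \mathbb{T}'_{\ell,n}$. The change of coefficients isomorphism \eqref{4.4} for the trivial summand, together with the analogous identity for the stable summand built into the choice of $S$, ensures that once both conclusions are verified at the mod $\ell^n$ level, they lift by compatibility to the $\mathbb{Z}_\ell$ level.

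For fixed $n$ I propose to establish both parts simultaneously by induction on the length of a word in the generators of $\mathbb{T}'_{\ell,n}$, which are of two types: underived Hecke operators $T_{g_v}$ at admissible places, and derived Hecke operators $h_{z,\alpha}$ at good places $v$ with $q_v \equiv 1\,(\text{mod}\,\ell^n)$. For the underived generators, the semisimplicity argument recalled in Section~\ref{section3.4} shows that $T_{g_v}$ acts as the scalar $\deg(T_{g_v})$ on $H^{\ast}\bigl(Y(\mathsf{K}_f), \mathbb{C}\bigr)_{\text{tr}}$; descending through $\mathbb{Q}_\ell$ and the torsion-free $\mathbb{Z}_\ell$-lattice constructed in Section~\ref{section4.2}, and then reducing modulo $\ell^n$ via \eqref{4.4}, yields the same scalar action on $H^{\ast}\bigl(Y(\mathsf{K}_f), \mathbb{Z}/\ell^n\bigr)_{\text{tr}}$ and hence on its stable subspace. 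For a derived generator $h_{z,\alpha}$, Proposition~\ref{proposition3.8}(i) identifies $h_{z,\alpha}(\mathds{1})$ as a congruence class at $v$, and Proposition~\ref{proposition1.2} places this class inside $\text{Cong}_n = H^{\ast}\bigl(Y(\mathsf{K}_f), \mathbb{Z}/\ell^n\bigr)_{\text{st}}$.

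The critical step is to deduce the cup-product identity $h_{z,\alpha}(x) = x \cup h_{z,\alpha}(\mathds{1})$ for each stable $x$. By Proposition~\ref{proposition3.8}(ii), it suffices to show that every element of $\text{Cong}_n$ is super-trivial at $g_z \in G_v$. By Proposition~\ref{proposition1.2}, such an element is a polynomial in congruence classes attached to good places $w$ with $q_w \equiv 1\,(\text{mod}\,\ell^n)$. For $w \neq v$, Lemma~\ref{lemma3.5}(i) furnishes super-triviality at $g_z$; for $w = v$, all the hypotheses of Lemma~\ref{lemma3.5}(ii) hold (since $\ell > rd^2 \geq d$ forces $\ell \nmid |W|$; the goodness of $v$ forces $\mathsf{G}$ to split at $v$ in view of Section~\ref{section3.4}; and $q_v \equiv 1\,(\text{mod}\,\ell)$ from our congruence hypothesis), again yielding super-triviality. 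Since $\pi_{g_z}^{\ast}$ and $\widetilde{\pi}_{g_z}^{\ast}$ are ring homomorphisms, super-triviality is preserved under cup products, so every class in $\text{Cong}_n$ is super-trivial at $g_z$ and Proposition~\ref{proposition3.8}(ii) applies.

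The inductive packaging is then formal: if both conclusions hold for $h_1$ and $h_2$, then $h_2(\mathds{1})$ is stable and
\[ h_1 h_2(\mathds{1}) = h_1\bigl(h_2(\mathds{1})\bigr) = h_2(\mathds{1}) \cup h_1(\mathds{1}), \]
which lies in the stable subalgebra; the same computation applied to a stable $x$ produces $h_1 h_2(x) = x \cup h_1 h_2(\mathds{1})$. The main obstacle in the argument is not the inductive bookkeeping but the super-triviality step — without Proposition~\ref{proposition1.2}'s concrete identification of the stable subspace with $\text{Cong}_n$, one could not propagate super-triviality from individual congruence classes to arbitrary stable classes, and the cup-product formula for a derived generator would be out of reach.
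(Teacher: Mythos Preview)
Your proposal is correct and follows essentially the same route as the paper's own proof: reduce to generators of $\mathbb{T}'_{\ell,n}$, treat underived operators via the scalar action on the trivial summand (Section~\ref{section3.4}), treat derived operators via Proposition~\ref{proposition3.8}(i) together with Proposition~\ref{proposition1.2} for the first assertion, and via Lemma~\ref{lemma3.5} plus Proposition~\ref{proposition3.8}(ii) for the second. Your explicit inductive packaging for compositions and your observation that super-triviality is preserved under cup products make precise what the paper leaves implicit in its closing ``Proof of the lemma is now clear.''
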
  
	
\begin{proof}
If $h$ is an underived Hecke operator, then $h(\mathds{1}) = \text{deg}(h)\mathds{1}$. Now suppose $h$ is a derived Hecke operator supported at a good place $v$ satisfying $q_v \equiv 1(\text{mod }\ell^n)$ for some $n \geq 1$. Then, Proposition~\ref{proposition1.2} and Proposition~\ref{proposition3.8}(i) together shows, $t_n(\mathds{1}) \in \text{Cong}_n = H^{\ast}\big(Y(\mathsf{K}_f), \mathbb{Z}/\ell^n\big)_{\text{st}}$ for all $t_n \in \mathcal{H}(G_v,K_v)_{\mathbb{Z}/\ell^n}$. It follows that $h(\mathds{1}) \in H^{\ast}\big(Y(\mathsf{K}_f), \mathbb{Z}_{\ell}\big)_{\text{st}}$. To prove the second part, we fix an embedding $\mathbb{Q}_{\ell} \hookrightarrow \mathbb{C}$ and obtain the following injective change of coefficient maps:  
\begin{equation}
\label{6.1}
H^{\ast}\big(Y(\mathsf{K}_f), \mathbb{Z}_{\ell}\big)_{\text{tr}} \hookrightarrow H^{\ast}\big(Y(\mathsf{K}_f), \mathbb{Q}_{\ell}\big)_{\text{tr}} \hookrightarrow H^{\ast}\big(Y(\mathsf{K}_f), \mathbb{C}\big)_{\text{tr}}.
\end{equation}
Now the discussion in Section~\ref{section3.4} implies that at each admissible place the underived Hecke operators act on $H^{\ast}\big(Y(\mathsf{K}_f), \mathbb{Z}_{\ell}\big)_{\text{tr}}$ as multiplication by $h(\mathds{1}) = \text{deg}(h)\mathds{1}$. Let $v$ be a good place with $q_v \equiv 1(\text{mod }\ell^n)$ and $t_n$ be a derived Hecke operator at $v$.  Proposition~\ref{proposition1.2} and Lemma~\ref{lemma3.5} together show that each class in $H^{\ast}\big(Y(\mathsf{K}_f), \mathbb{Z}/\ell^n\big)_{\text{st}}$ is Hecke super-trivial at $v$. Therefore, by Proposition~\ref{proposition3.8}(ii)  $t_n(x) = x \cup t_n(\mathds{1})$ for each $x \in H^{\ast}\big(Y(\mathsf{K}_f), \mathbb{Z}/\ell^n\big)_{\text{st}}$. Proof of the lemma is now clear.  
\end{proof}
	
\paragraph{Proof of Theorem~\ref{theorem1.3}.} Let $S$ be as above. Recall that by construction $H^{\ast}\big(Y(\mathsf{K}_f), \mathbb{Z}_{\ell}\big)_{\text{st}}$ is a subalgebra of the cohomology ring. Therefore, Lemma~\ref{lemma6.1} demonstrates that the action of $\mathbb{T}'_{\ell}$ preserves the stable submodule $H^{\ast}\big(Y(\mathsf{K}_f), \mathbb{Z}_{\ell}\big)_{\text{st}}$. Suppose that $\mathbb{T}'_{\ell, \text{st}}$ is the image of $\mathbb{T}'_{\ell}$ in the endomorphism ring of $H^{\ast}\big(Y(\mathsf{K}_f), \mathbb{Z}_{\ell}\big)_{\text{st}}$. With each $x \in H^{\ast}\big(Y(\mathsf{K}_{f}), \mathbb{Z}_{\ell}\big)_{\text{st}}$ one can associate an endomorphism \[\text{cup}_x: H^{\ast}\big(Y(\mathsf{K}_{f}), \mathbb{Z}_{\ell}\big)_{\text{st}} \to H^{\ast}\big(Y(\mathsf{K}_{f}), \mathbb{Z}_{\ell}\big)_{\text{st}};\hspace{.3cm} y \mapsto y \cup x.\] Let $\text{Cup}_{\ell, \text{st}}$ denote the subalgebra of endomorphisms generated by the right cup product operators. Lemma~\ref{lemma6.1} implies $\mathbb{T}_{\ell, \text{st}}' \subseteq \text{Cup}_{\ell, \text{st}}$. To conclude the proof of part (i), it suffices to check that the inclusion above is an equality. Now, the image of $\mathbb{T}'_{\ell, n}$ in the endomorphism ring of $H^{\ast}\big(Y(\mathsf{K}_f), \mathbb{Z}/\ell^n\big)$ contains $\text{cup}_x$ for each $x \in \text{Cong}_n = H^{\ast}\big(Y(\mathsf{K}_f), \mathbb{Z}/\ell^n\big)_{\text{st}}$. Hence, $\mathbb{T}_{\ell, \text{st}}' = \text{Cup}_{\ell, \text{st}}$ as desired. To prove the second part one needs to note that by construction the stable cohomology satisfies the change of coefficients isomorphism $H^{\ast}\big(Y(\mathsf{K}_f), \mathbb{Q}_{\ell}\big)_{\text{st}} \cong H^{\ast}\big(Y(\mathsf{K}_f), \mathbb{Q}\big)_{\text{st}} \otimes \mathbb{Q}_{\ell}$.  \hfill $\square$
	
\paragraph{Proof of Corollary~\ref{corollary1.4}.} Let $S = S_2$ as in the discussion above. Then the hypothesis of Corollary~\ref{corollary1.4} and \eqref{4.3} together show that $H^{\ast}\big(Y(\mathsf{K}_f), \mathbb{Z}_{\ell}\big)_{\text{tr}} = H^{\ast}\big(Y(\mathsf{K}_f), \mathbb{Z}_{\ell}\big)_{\text{st}}$. Hence, the assertion is a consequence of Theorem~\ref{theorem1.3}. \hfill $\square$

\subsection{Proof of Theorem~\ref{theorem1.5}}
\label{section6.2}
Let the notation be as in the beginning of the previous section, and $\ell \notin S$. One can use Proposition~\ref{proposition1.2} and the arguments in the proof of Theorem~\ref{theorem1.3} to discover that
\begin{equation}
\label{6.2}
\text{Cup}_{\ell, \text{tr}} := \{\text{cup}_x \mid x \in H^{\ast}\big(Y(\mathsf{K}_f), \mathbb{Z}_{\ell}\big)_{\text{st}}\} \subseteq \mathbb{T}'_{\ell, \text{tr}}
\end{equation}
where we are considering $\text{cup}_x$ as an endomorphism of $H^{\ast}\big(Y(\mathsf{K}_f), \mathbb{Z}_{\ell}\big)_{\text{tr}}$. The proof of Lemma~\ref{lemma6.1} shows that to prove \eqref{6.2} is an equality, it suffices to verify all classes in $H^{\ast}\big(Y(\mathsf{K}_f), \mathbb{Z}/\ell^n\big)_{\text{tr}}$ are super-trivial at every good place $v$ satisfying $q_v \equiv 1(\text{mod }\ell^n)$. Let $n \geq 1$ be fixed. By our choice of $\ell$, there is a direct sum decomposition 
\[H^{\ast}\big(Y(\mathsf{K}_f), \mathbb{Z}/\ell^n\big)_{\text{tr}} = \oplus_{\mathcal{J} \subseteq \mathcal{M}^{\text{ur}}_{F,d}} \varepsilon_{\mathcal{J}}H^{\ast}\big(Y(\mathsf{K}_f), \mathbb{Z}/\ell^n\big)_{\text{st}};\]
see Section~\ref{section4.2}. Now, each class in stable cohomology is already Hecke super-trivial at the required places. Thus, it remains to check that each $\varepsilon$ class appearing above is also super-trivial. We perform this calculation using the classical picture for Hecke operators, as opposed to the adelic picture. Let $\gamma \in \mathsf{G}(F)$ and consider the discrete subgroup $\Gamma_{\mathsf{K}_f}^{(\gamma)} := \gamma \Gamma_{\mathsf{K}_{f}} \gamma^{-1} \subseteq \mathsf{G}_{\text{re}}$. Now we have a canonical map 
\[[\gamma]: \Gamma_{\mathsf{K}_{f}}^{(\gamma)} \backslash \mathsf{G}_{\text{re}}/\mathsf{K} \to \Gamma_{\mathsf{K}_{f}} \backslash \mathsf{G}_{\text{re}}/\mathsf{K}, \hspace{.3cm}[g] \mapsto [\gamma^{-1}g]\]
so that the following commutative square is a pullback diagram of principal $\mathsf{K}$-bundles:
\begin{equation*}
\begin{tikzcd}
\Gamma_{\mathsf{K}_{f}}^{(\gamma)} \backslash \mathsf{G}_{\text{re}}\arrow[d]\arrow[r, "{[g]} \mapsto {[\gamma^{-1}g]}"] \arrow[dr, phantom, "\ulcorner", very near start] &	\Gamma_{\mathsf{K}_{f}} \backslash \mathsf{G}_{\text{re}} \arrow[d]\\
\Gamma_{\mathsf{K}_{f}}^{(\gamma)} \backslash \mathsf{G}_{\text{re}}/\mathsf{K}\arrow[r, "{[\gamma]}"] &	\Gamma_{\mathsf{K}_{f}} \backslash \mathsf{G}_{\text{re}}/\mathsf{K},
\end{tikzcd}
\end{equation*} 
that is to say, the pullback of the bundle on the right side by $[\gamma]$ equals the bundle on the left side. Thus, we have a commutative diagram
\begin{equation*}
\begin{tikzcd}
H^{\ast}\big(B\mathsf{K}, \mathbb{Z}/\ell^n\big) \arrow[r] \arrow[dr]& H^{\ast}\big(\Gamma_{\mathsf{K}_{f}} \backslash \mathsf{G}_{\text{re}}/\mathsf{K}, \mathbb{Z}/\ell^n\big) \arrow[d, "{[\gamma]^{\ast}}"]\\
& H^{\ast}\big(\Gamma_{\mathsf{K}_{f}}^{(\gamma)} \backslash \mathsf{G}_{\text{re}}/\mathsf{K}, \mathbb{Z}/\ell^n\big)
\end{tikzcd}
\end{equation*} 
where the right and down-right arrows arise from the natural maps into the classifying space. But the $\varepsilon$ classes in the locally symmetric space arise as the image of Euler classes in the cohomology of $B\mathsf{K}$; see the proof of Lemma~\ref{lemma3.9}. It follows that the $\varepsilon$ classes in the cohomology of $Y(\mathsf{K}_f)$ must be Hecke super-trivial. As a consequence, the inclusion in \eqref{6.2} is an equality, i.e., $\mathbb{T}'_{\ell, \text{tr}} = \text{Cup}_{\ell, \text{tr}}$. Now the first part of the assertion follows from \eqref{4.3}. To prove the second part one additionally needs the change of coefficients isomorphism \[H^{\ast}\big(Y(\mathsf{K}_f), \mathbb{Q}_{\ell}\big)_{\text{tr}} \cong H^{\ast}\big(Y(\mathsf{K}_f), \mathbb{Q}\big)_{\text{tr}} \otimes \mathbb{Q}_{\ell}.\] 
\hfill $\square$

\section*{Acknowledgments}
I am indebted to Prof. Arvind Nair and Prof. Dipendra Prasad for their help and guidance throughout the work. I also wish to thank Prof. Akshay Venkatesh for his encouragement.

\appendix
\section{Cohomology of compact twins} 
\label{appendixA}
This appendix aims to provide a review of Borel's theorem \cite{borel} on stable real cohomology of arithmetic groups that yields an efficient method of computing stable cohomology using the de Rham model.
	
\subsection{Borel's theorem in stable cohomology}
\label{appendixA.1}
Let $\mathsf{G}$ be a connected, semisimple algebraic group over $\mathbb{Q}$ and $\Gamma$ be an arithmetic subgroup of $\mathsf{G}(\mathbb{R})$. For simplicity, assume that $\mathsf{G}(\mathbb{R})$ is a connected Lie group. Now suppose $\mathsf{K}$ is a maximal compact subgroup of $\mathsf{G}(\mathbb{R})$ and set $\mathsf{X} = \mathsf{G}(\mathbb{R})/\mathsf{K}$. Let $\Omega_{\mathsf{X}}^{\ast}$ be the complex of $\mathbb{C}$-valued differential forms on $\mathsf{X}$. The complex $\Omega_{\mathsf{X}}^{\Gamma}$ computes cohomology of the group $\Gamma$ with coefficients in $\mathbb{C}$, i.e., $H^{*}(\Gamma,\mathbb{C}) = H^{*}(\Omega_{\mathsf{X}}^{\Gamma})$. Set $\mathsf{I}_{\mathsf{G}(\mathbb{R})}^{\ast} := \Omega_{\mathsf{X}}^{\mathsf{G}(\mathbb{R})}$. The graded module $\mathsf{I}_{\mathsf{G}(\mathbb{R})}^{*}$ consists of closed harmonic forms and equals the cohomology of the complex $\Omega_{\mathsf{X}}^{\mathsf{G}(\mathbb{R})}$. Thus the inclusion of complexes $\mathsf{I}_{\mathsf{G}(\mathbb{R})}^{\ast} \xhookrightarrow{} \Omega_{\mathsf{X}}^{\Gamma}$ induces a $\mathbb{C}$-algebra homomorphism 
\[\text{Bo}_{\Gamma}^{\ast}: \mathsf{I}_{\mathsf{G}(\mathbb{R})}^{\ast} \to H^{\ast}(\Gamma, \mathbb{C})\]
where algebra structure refers to cup products on domain and codomain. Before proceeding further, we first analyze $\mathsf{I}_{\mathsf{G}(\mathbb{R})}^{\ast}$ in terms of a more familiar object. Let $\mathfrak{g}$, resp. $\mathfrak{k}$, denote the real Lie algebra of the Lie group $\mathsf{G}(\mathbb{R})$, resp. $\mathsf{K}$. One considers the value of the invariant differential forms at identity to obtain an isomorphism of $\mathbb{C}$-algebras $\mathsf{\alpha}_{\mathsf{G}(\mathbb{R})}: \mathsf{I}_{\mathsf{G}(\mathbb{R})}^{\ast} \xrightarrow{\cong} H^{\ast}(\mathfrak{g},\mathfrak{k}, \mathbb{C})$ where the right-hand side stands for the relative Lie algebra cohomology. Suppose that $\mathfrak{g} = \mathfrak{k} \oplus \mathfrak{p}$ is the Cartan decomposition of $\mathfrak{g}$ with respect to $\mathfrak{p}$. Then $\mathfrak{g}^{u} = \mathfrak{k} \oplus i\mathfrak{p} \subseteq \mathfrak{g}_{\mathbb{C}}$ is a compact form of $\mathfrak{g}$ inside the complex Lie algebra. Let $\mathsf{G}(\mathbb{R})^u$ denote the maximal compact subgroup of $\mathsf{G}(\mathbb{C})$ that corresponds to the subalgebra $\mathfrak{g}^{u}$. We define the \textit{compact twin} of $\mathsf{G}(\mathbb{R})$ by \[\text{CT}\big(\mathsf{G}(\mathbb{R})\big) := \mathsf{G}(\mathbb{R})^{u}/\mathsf{K}\] and consider the complex of invariant differential forms $\mathsf{I}^{\ast}_{\mathsf{G}(\mathbb{R})^u}$ as above. Since $\mathsf{G}(\mathbb{R})^{u}$ is compact the inclusion $\mathsf{I}^{\ast}_{\mathsf{G}(\mathbb{R})^{u}} \xhookrightarrow{} \Omega^{\ast}_{\text{CT}(\mathsf{G}(\mathbb{R}))}$ induces a $\mathbb{C}$ algebra isomorphism $\mathsf{I}^{\ast}_{\mathsf{G}(\mathbb{R})^u} \cong H^{\ast}\big(\text{CT}(\mathsf{G}(\mathbb{R})), \mathbb{C}\big)$. As before, the space on the left-hand side admits a canonical description in terms of relative Lie algebra cohomology $\alpha_{\mathsf{G}(\mathbb{R})^u}: \mathsf{I}^{\ast}_{\mathsf{G}(\mathbb{R})^u} \xrightarrow{\cong} H^{\ast}(\mathfrak{g}^u, \mathfrak{k}, \mathbb{C})$. Recall that the complex $\text{Hom}_{\mathfrak{k}}\big(\bigwedge^{\ast}\mathfrak{p}, \mathbb{C}\big)$, resp. $\text{Hom}_{\mathfrak{k}}\big(\bigwedge^{\ast}i\mathfrak{p}, \mathbb{C}\big)$, computes the relative cohomology $H^{\ast}(\mathfrak{g}, \mathfrak{k}, \mathbb{C})$, resp. $H^{\ast}(\mathfrak{g}^{u}, \mathfrak{k}, \mathbb{C})$. Thus, the $\mathbb{R}$-linear map $X \to iX$ on the Lie subalgebra part identifies these two complexes. We assemble these isomorphisms to obtain a $\mathbb{C}$-algebra homomorphism   
\[\text{Bo}_{\text{ct},\Gamma}^{\ast}: H^{\ast}\big(\text{CT}(\mathsf{G}(\mathbb{R})), \mathbb{C}\big) \to H^{\ast}(\Gamma, \mathbb{C})\]
Now Borel's theorem asserts that there is a constant $N(\mathsf{G})$ depending only on the structure of $\mathsf{G}$ so that $\text{Bo}^{j}_{\text{ct}, \Gamma}$ is an isomorphism for each $j \leq N(\mathsf{G})$. Moreover $N(\mathsf{G}) \to \infty$ as $\text{rk}_{\mathbb{Q}}\mathsf{G} \to \infty$. 
	
We explicitly work out the example relevant to this article \cite[9.3]{gil}. Let $F$ be a number field. Set 
\begin{equation}
\label{A.1}
\mathsf{G}_n = \text{Res}^{F}_{\mathbb{Q}} \text{SL}_{n,F}, \hspace{.3cm} \Gamma_n = \text{SL}_n(\mathcal{O}_F). \hspace{.3cm}(\substack{n \geq 1}) 
\end{equation}
Then $\mathsf{G}_n(\mathbb{R}) = \prod_{\sigma \in \mathcal{M}_{F}^{\text{re}}}\text{SL}_n(\mathbb{R}) \times \prod_{\sigma \in \mathcal{M}_{F}^{\text{im}}}\text{SL}_n(\mathbb{C})$ and $\Gamma_n$ is an arithmetic subgroup of $\mathsf{G}_n(\mathbb{R})$. The compact twins of the special linear groups are as follows: 
\begin{equation*}
\text{CT}\big(\text{SL}_n(\mathbb{R})\big) = \frac{\text{SU}_n}{\text{SO}_n}, \hspace{.3cm} \text{CT}\big(\text{SL}_n(\mathbb{C})\big) = \frac{\text{SU}_n \times \text{SU}_n}{\text{SU}_n} \cong \text{SU}_n.
\end{equation*} 
As a consequence \[\text{CT}\big(\mathsf{G}_n(\mathbb{R})\big) = \prod_{\sigma \in \mathcal{M}_{F}^{\text{re}}}\frac{\text{SU}_n} {\text{SO}_n} \times \prod_{\sigma \in \mathcal{M}_{F}^{\text{im}}}\text{SU}_n.\]
Note that $\text{rk}_{\mathbb{Q}} \mathsf{G}_{n} \to \infty$ as $n \to \infty$. We turn $\{\mathsf{G}_n\}_{n \geq 1}$ into a direct system of algebraic groups via the closed embeddings $\text{SL}_{n,F} \xhookrightarrow{} \text{SL}_{n+1,F}$ considered in Section~\ref{section2.1}. The maps in the direct system yield natural embeddings of the corresponding real Lie groups and their compact twins. Write 
\[\text{CT}\big(\mathsf{G}_{\infty}(\mathbb{R})\big) := \varinjlim_{n} \text{CT}\big(\mathsf{G}_{n}(\mathbb{R})\big) = \prod_{\sigma \in \mathcal{M}^{\text{re}}_F} \frac{\text{SU}_{\infty}}{\text{SO}_{\infty}} \times \prod_{\sigma \in \mathcal{M}^{\text{im}}_F} \text{SU}_{\infty}\] 
where the topology on the final space is the weak topology arising from the limit. A standard argument involving CW structures demonstrates that the sequence of spaces $\{\text{CT}\big(\mathsf{G}_n(\mathbb{R})\big)\}_{n \geq 1}$ exhibits stability in homology and cohomology. In particular $\text{CT}\big(\mathsf{G}_{\infty}(\mathbb{R})\big)$ is a nice topological space in the sense of Section~\ref{section2.1}. We combine this result with the stability for discrete subgroups to obtain a stable Borel map
\begin{equation}
\label{A.2}
\text{Bo}^{\ast}_{\text{ct},\infty}: H^{*}\big(\text{CT}\big(\mathsf{G}_{\infty}(\mathbb{R})\big),\mathbb{C}\big) \to H^{*}\big(\text{SL}_{\infty}(\mathcal{O}_F),\mathbb{C}\big).
\end{equation}

\begin{theorem}
\label{theoremA.1}
The map $\emph{Bo}_{\emph{ct},\infty}^{\ast}$ is an isomorphism of graded $\mathbb{C}$-algebras. 
\end{theorem}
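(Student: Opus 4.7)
The plan is to reduce Theorem~\ref{theoremA.1} to Borel's finite-dimensional theorem by exploiting stability on both sides of the Borel map and matching them through the finite-level Borel isomorphisms. First, I would verify that for each $n$ the finite-level Borel map $\text{Bo}^{\ast}_{\text{ct}, \Gamma_n}$ is compatible with the closed embedding $\mathsf{G}_n \hookrightarrow \mathsf{G}_{n+1}$; that is, the square formed by $\text{Bo}^{\ast}_{\text{ct}, \Gamma_n}$, $\text{Bo}^{\ast}_{\text{ct}, \Gamma_{n+1}}$, and the restrictions on either side commutes. This follows from the naturality of the invariant-differential-form construction: a closed embedding of reductive groups induces compatible maps on maximal compact subgroups of the real points and on their compact duals, and pulling back invariant forms intertwines the Borel map at the level of the complexes $\mathsf{I}^{\ast}_{\mathsf{G}_n(\mathbb{R})^u} \hookrightarrow \Omega^{\ast}_{\Gamma_n\backslash \mathsf{X}_n}$. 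Consequently the finite-level Borel maps assemble into a map on inverse limits which agrees with $\text{Bo}^{\ast}_{\text{ct}, \infty}$ as defined in \eqref{A.2}.

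Next, fix a degree $j \geq 0$ and choose $n$ large enough so that simultaneously: (i) Borel's theorem gives an isomorphism $\text{Bo}^{j}_{\text{ct}, \Gamma_n}$, which is possible because $N(\mathsf{G}_n) \to \infty$ as $n \to \infty$; (ii) van der Kallen's stability recalled in Section~\ref{section2.1} gives an isomorphism $H^{j}\big(\text{SL}_{\infty}(\mathcal{O}_F), \mathbb{C}\big) \cong H^{j}\big(\text{SL}_{n}(\mathcal{O}_F), \mathbb{C}\big)$; and (iii) classical homological stability for $\text{SU}_n$ and for $\text{SU}_n/\text{SO}_n$, combined with the K\"unneth formula over $\mathbb{C}$, yields an isomorphism $H^{j}\big(\text{CT}(\mathsf{G}_{\infty}(\mathbb{R})), \mathbb{C}\big) \cong H^{j}\big(\text{CT}(\mathsf{G}_{n}(\mathbb{R})), \mathbb{C}\big)$. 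The commutative square formed by $\text{Bo}^{j}_{\text{ct}, \infty}$, $\text{Bo}^{j}_{\text{ct}, \Gamma_n}$, and the two restriction maps then contains three arrows that are isomorphisms; hence the fourth, namely $\text{Bo}^{j}_{\text{ct}, \infty}$, is also an isomorphism. Since $j$ was arbitrary, this proves the graded-vector-space version of the statement.

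Finally, each finite-level Borel map is a $\mathbb{C}$-algebra homomorphism (invariant differential forms form a graded algebra under wedge product, and the Borel map is induced by an inclusion of complexes), and the restriction maps on both sides respect cup products. Passing to the limit, $\text{Bo}^{\ast}_{\text{ct}, \infty}$ is therefore a ring homomorphism, completing the proof. The main obstacle is the functoriality check in the first step: one must choose compatible maximal compact subgroups across the tower $\{\mathsf{G}_n\}$ and compatible Cartan decompositions $\mathfrak{g}_n = \mathfrak{k}_n \oplus \mathfrak{p}_n$, so that restriction of invariant forms on the compact duals matches the expected pullbacks in group cohomology. With the standard block-diagonal choices this is bookkeeping, but it is the only place where one has to engage directly with the Lie-algebraic construction rather than treating Borel's theorem as a black box.
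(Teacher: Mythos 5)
Your proposal is correct and essentially reconstructs the argument by which Borel himself derives the stable isomorphism from the finite-level theorem; the paper simply cites \cite[Theorem~11.1]{borel}, which is precisely the stable statement, rather than spelling out the reduction. The one place where you wave your hand---the compatibility of the finite-level Borel maps with the block-diagonal embeddings $\mathsf{G}_n \hookrightarrow \mathsf{G}_{n+1}$---is indeed the crux, and it does work out: the standard nested maximal compacts $\text{SO}_n \subset \text{SO}_{n+1}$, $\text{SU}_n \subset \text{SU}_{n+1}$ induce compatible Cartan decompositions $\mathfrak{g}_n = \mathfrak{k}_n \oplus \mathfrak{p}_n$ with $\mathfrak{p}_n \subset \mathfrak{p}_{n+1}$, so the identification $\text{Hom}_{\mathfrak{k}}(\bigwedge^{\ast}\mathfrak{p}, \mathbb{C}) \cong \text{Hom}_{\mathfrak{k}}(\bigwedge^{\ast}i\mathfrak{p}, \mathbb{C})$ via $X \mapsto iX$ is functorial in $n$, and both squares of the kind you need commute at the level of complexes. (The paper in effect verifies a closely related functoriality in the proof of Theorem~\ref{theoremA.2} and in Proposition~\ref{propositionA.3}.) One further point worth recording explicitly in your step (i): the $\mathbb{Q}$-rank of $\mathsf{G}_n = \text{Res}^{F}_{\mathbb{Q}}\text{SL}_{n,F}$ equals the $F$-rank of $\text{SL}_{n,F}$, namely $n-1$, so Borel's bound $N(\mathsf{G}_n)$ does tend to infinity as you claim.
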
  
	
\begin{proof}
See Theorem~11.1 in \cite{borel}.   
\end{proof}
	
\subsubsection{Compatibility with the Hopf algebra structure}
\label{appendixA.1.1}
Recall that the CW complex $B\text{SL}_{\infty}(\mathcal{O}_F)^{+}$ has a commutative $H$-group structure arising from operations on the infinite general linear groups \cite{srinivas}. One can use the same operations to turn the infinite compact group $\text{SU}_{\infty}$ and its symmetric space $\text{SU}_{\infty}/\text{SO}_{\infty}$ into commutative $H$-groups \cite[IV.3]{mimura}. As a consequence $\text{CT}\big(\mathsf{G}_{\infty}(\mathbb{R})\big)$ possesses a product $H$-group structure stemming from its components. Next, we show that the stable Borel map is an isomorphism of Hopf algebras. For this purpose, it suffices to check that the Borel map is compatible with the coproducts on its domain and codomain. 
	
The van der Kallen's theorem for the stability implies that the coproduct on $H^{\ast}\big(\text{SL}_{\infty}(\mathcal{O}_F), \mathbb{C}\big)$ is approximated by the naive intertwining map 
\begin{equation}
\label{A.3}
\begin{aligned}
& \text{SL}_m(\mathcal{O}_F) \times \text{SL}_n(\mathcal{O}_F) \to \text{SL}_{m+n}(\mathcal{O}_F), \hspace{.3cm}(\substack{m,n \geq 1})\\
& \hspace{2cm}(A, B) \mapsto \begin{pmatrix}
				A & 0\\
				0 & B
			\end{pmatrix}.
\end{aligned}
\end{equation}
In more detail, given an integer $j \geq 0$ we fix $n_0 \geq 1$ so that for each $n \geq n_0$ the inclusion $\text{SL}_{n}(\mathcal{O}_F) \xhookrightarrow{} \text{SL}_{\infty}(\mathcal{O}_F)$ induces isomorphism in homology and cohomology in the degrees $\leq j$. Then for each $m , n \geq n_0$, the pullback of \eqref{A.3} in cohomology coincides with the pullback of $H$-law in the degrees $\leq j$ since the intertwining map differs from the $H$-law by a pseudo-conjugation which induces the identity map on cohomology; see Remark in \cite[p.26]{srinivas}. We also have a naive intertwining map for compact twins 
\begin{equation}
\label{A.4}
\text{CT}\big(\mathsf{G}_m(\mathbb{R})\big) \times \text{CT}\big(\mathsf{G}_n(\mathbb{R})\big) \to \text{CT}\big(\mathsf{G}_{m+n}(\mathbb{R})\big)
\end{equation}
stemming from the coordinatewise intertwining maps 
\[\text{SU}_m \times \text{SU}_n \to \text{SU}_{m+n}, \hspace{.3cm}\text{SU}_m/\text{SO}_m \times \text{SU}_n/\text{SO}_n \to \text{SU}_{m+n}/\text{SO}_{m+n}\]
given by $(A, B) \mapsto \begin{pmatrix}
		A & 0\\
		0 & B
	\end{pmatrix}$ and its analog.  
An argument similar to the discrete subgroups verifies that in this case also given an integer $j \geq 0$ there exists $n_0 \geq 1$ so that the pullback of $H$-law agrees with the pullback of \eqref{A.4} in degrees $\leq j$ whenever $m,n \geq n_0$. 
	
\begin{theorem}
\label{theoremA.2}\emph{\cite[p.45]{venkatesh}}
The Borel map $\emph{Bo}_{\emph{ct},\infty}^{\ast}$ is an isomorphism of Hopf algebras.
\end{theorem}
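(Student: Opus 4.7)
The plan is to leverage Theorem~\ref{theoremA.1}, which already establishes that $\text{Bo}_{\text{ct},\infty}^{\ast}$ is an isomorphism of graded $\mathbb{C}$-algebras. Since a bialgebra morphism whose underlying algebra map is invertible is automatically an isomorphism of Hopf algebras (the antipode is determined by the bialgebra structure in our connected graded setting), it suffices to verify compatibility with the coproducts on both sides.

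The first step is to reduce the coproduct compatibility to a degree-by-degree statement. Recall from the discussion preceding the theorem that in any fixed degree $j$, one can choose $n_0 = n_0(j)$ so that the coproduct on $H^{\ast}\big(\text{SL}_{\infty}(\mathcal{O}_F), \mathbb{C}\big)^{\leq j}$ agrees with the pullback along the block-diagonal map \eqref{A.3} for all $m, n \geq n_0$, and analogously the coproduct on $H^{\ast}\big(\text{CT}(\mathsf{G}_{\infty}(\mathbb{R})), \mathbb{C}\big)^{\leq j}$ is computed by the pullback along \eqref{A.4}. Thus in each fixed degree it is enough to match these two naive pullbacks under the Borel isomorphism, for a single sufficiently large pair $(m, n)$.

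The core of the argument is a naturality statement for the Borel map applied to the block inclusion $\mathsf{G}_m \times \mathsf{G}_n \hookrightarrow \mathsf{G}_{m+n}$. This inclusion is compatible with the chosen maximal compact subgroups and restricts the Cartan decomposition of $\mathfrak{g}_{m+n}$ to the direct sum of the Cartan decompositions of $\mathfrak{g}_m$ and $\mathfrak{g}_n$; it also sends $\Gamma_m \times \Gamma_n$ into $\Gamma_{m+n}$. These compatibilities imply that the three ingredients used to construct $\text{Bo}^{\ast}$ — the identification of invariant forms with relative Lie algebra cohomology, the twist $X \mapsto iX$ between $\mathfrak{g}$ and $\mathfrak{g}^u$, and the inclusion of invariant forms into arithmetic de Rham complex — are all functorial for the block inclusion. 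Combined with the Künneth isomorphism on each side, this yields a commutative square
\begin{equation*}
\begin{tikzcd}[column sep=small]
H^{\ast}\big(\text{CT}(\mathsf{G}_{m+n}(\mathbb{R})), \mathbb{C}\big) \arrow[r, "\eqref{A.4}^{\ast}"] \arrow[d, "\text{Bo}^{\ast}"] & H^{\ast}\big(\text{CT}(\mathsf{G}_m(\mathbb{R})), \mathbb{C}\big) \otimes H^{\ast}\big(\text{CT}(\mathsf{G}_n(\mathbb{R})), \mathbb{C}\big) \arrow[d, "\text{Bo}^{\ast} \otimes \text{Bo}^{\ast}"]\\
H^{\ast}\big(\Gamma_{m+n}, \mathbb{C}\big) \arrow[r, "\eqref{A.3}^{\ast}"] & H^{\ast}\big(\Gamma_{m}, \mathbb{C}\big) \otimes H^{\ast}\big(\Gamma_{n}, \mathbb{C}\big).
\end{tikzcd}
\end{equation*}
Passing to the direct limit in the stable range identifies the top and bottom rows with the coproducts and finishes the proof.

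The main obstacle I anticipate is verifying the functoriality square cleanly. The nontrivial point is the $i$-twist: one must check that the identification of $\text{Hom}_{\mathfrak{k}}\big(\bigwedge^{\ast}\mathfrak{p}, \mathbb{C}\big)$ with $\text{Hom}_{\mathfrak{k}}\big(\bigwedge^{\ast}i\mathfrak{p}, \mathbb{C}\big)$ is performed coherently on the product. This reduces to the observation that the Cartan involution of $\mathfrak{g}_m \oplus \mathfrak{g}_n$ (viewed as a subalgebra of $\mathfrak{g}_{m+n}$ via the block embedding) is the direct sum of the Cartan involutions of the summands, so the decomposition $\mathfrak{p}_{m+n} \supseteq \mathfrak{p}_m \oplus \mathfrak{p}_n$ and its imaginary counterpart are respected. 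Once this is in hand the remainder is formal naturality of de Rham and $(\mathfrak{g}, K)$-cohomology under homomorphisms of pairs.
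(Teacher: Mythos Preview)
Your proposal is correct and follows essentially the same route as the paper: reduce to coproduct compatibility, approximate the $H$-law coproducts by the naive block-diagonal intertwining maps \eqref{A.3} and \eqref{A.4} in a fixed degree, and then verify the commutative square expressing naturality of the finite-level Borel maps with respect to these intertwinings. The paper records the identical square (with the roles of rows and columns swapped) and dismisses its commutativity as ``an easy exercise using the construction of the Borel map,'' whereas you have helpfully unpacked the key point about the Cartan decomposition and the $i$-twist being compatible with the block embedding.
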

	
\begin{proof}
We consider the naive intertwining map 
\begin{equation*}
\mathsf{G}_{m} \times \mathsf{G}_n \to \mathsf{G}_{m+n} \hspace{.3cm}(\substack{m,n \geq 1}) 
\end{equation*}
for algebraic groups arising from the intertwining map $\text{SL}_{m,F} \times \text{SL}_{n,F} \to \text{SL}_{m+n,F}$. This map gives rise to an intertwining map for the corresponding $\mathbb{Q}$ and $\mathbb{R}$-valued points, which in turn induces \eqref{A.3} for discrete subgroups and \eqref{A.4} for the compact twins. Now, an easy exercise using the construction of the Borel map verifies that the following diagram commutes 
\begin{equation*}
\begin{tikzcd}[column sep=huge]
H^{\ast}\big(\text{CT}(\mathsf{G}_{m+n}(\mathbb{R}))\big) \arrow[d]\arrow[r, "\text{Bo}_{\text{ct}, \Gamma_{m+n}}"] & H^{\ast}\big(\Gamma_{m+n}\big)\arrow[d]\\
H^{\ast}\big(\text{CT}(\mathsf{G}_{m}(\mathbb{R}))\big) \otimes H^{\ast}\big(\text{CT}(\mathsf{G}_{n}(\mathbb{R}))\big) \arrow[r, "\text{Bo}_{\text{ct}, \Gamma_{m}} \otimes \text{Bo}_{\text{ct}, \Gamma_{n}}"] & H^{\ast}\big(\Gamma_{m}\big) \otimes H^{\ast}\big(\Gamma_{n}\big)
\end{tikzcd}
\end{equation*} 
where the vertical arrows stem from the intertwining maps. The assertion is a consequence of the commutativity of this diagram and the approximation procedure described above. 
\end{proof}
	
Theorem~\ref{theoremA.2} describes $H^{\ast}\big(\text{SL}_{\infty}(\mathcal{O}_F), \mathbb{C}\big)$ as a Hopf algebra in terms of the more familiar theory of compact Lie groups and their homogeneous spaces. This phenomenon plays a role in the cohomology calculus of Venkatesh's original treatment. We completely bypass the Hopf algebra structure using our systematic approach through congruence classes.

\subsubsection{Borel map and Chern-Weil theory}
\label{appendixA.1.2}
Let the notation be as in the beginning of Section~\ref{appendixA.1}. In particular, we consider differential forms and de Rham cohomology with coefficients in complex vector spaces unless otherwise stated. Moreover, assume that $\Gamma$ is a torsion-free discrete subgroup of $\mathsf{G}(\mathbb{R})$ so that $\Gamma \cap \mathsf{K}$ is trivial. In this situation, $\Gamma \backslash \mathsf{G}(\mathbb{R}) \to \Gamma \backslash \mathsf{G}(\mathbb{R})/\mathsf{K}$ is a principal $\mathsf{K}$-bundle and hence gives rise to a continuous map $\Gamma \backslash \mathsf{G}(\mathbb{R})/\mathsf{K} \to B\mathsf{K}$ unique up to homotopy, where $B\mathsf{K}$ is the classifying space for the compact group $\mathsf{K}$. Similarly, the principal $\mathsf{K}$-bundle $\mathsf{G}(\mathbb{R})^{u} \to \text{CT}\big(\mathsf{G}(\mathbb{R})\big)$ also induces a map $\text{CT}\big(\mathsf{G}(\mathbb{R})\big) \to B\mathsf{K}$. These maps into classifying spaces yield the following maps at the level of singular cohomology:
\begin{equation}
\label{A.5}
\begin{tikzcd}[column sep= large]
H^{\ast}\big(B\mathsf{K}, \mathbb{C}\big) \arrow[r] \arrow[dr] & H^{\ast}\big(\Gamma, \mathbb{C}\big)\\
& H^{\ast}\big(\text{CT}\big(\mathsf{G}(\mathbb{R})\big), \mathbb{C}\big)\arrow[u, dotted, "\text{Bo}^{\ast}_{\text{ct}, \Gamma}"]
\end{tikzcd}
\end{equation}
where the dotted arrow is the Borel homomorphism. The following result is necessary for the structure theorem regarding the derived Hecke action on the full Hecke trivial subspace. 
	
\begin{proposition}
\label{propositionA.3}
The diagram \eqref{A.5} above is commutative. 
\end{proposition}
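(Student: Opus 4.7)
The plan is to establish commutativity at the level of Chern--Weil representatives, exploiting the fact that $H^{\ast}(B\mathsf{K}, \mathbb{C})$ is generated by characteristic classes associated to $\mathrm{Ad}(\mathsf{K})$-invariant symmetric polynomials on $\mathfrak{k}$. For any principal $\mathsf{K}$-bundle equipped with a connection, the pullback of such a class is represented by the differential form $P(\Omega)$, where $\Omega$ is the curvature. So it suffices to pick compatible connections on both K-bundles and trace the resulting forms through the identifications that define $\mathrm{Bo}^{\ast}_{\mathrm{ct}, \Gamma}$.

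First, I would equip $\mathsf{G}(\mathbb{R}) \to \mathsf{G}(\mathbb{R})/\mathsf{K}$ with the canonical (Maurer--Cartan) connection: the Cartan decomposition $\mathfrak{g} = \mathfrak{k} \oplus \mathfrak{p}$ yields a connection $1$-form $\omega_{\mathfrak{k}}$ by projecting the Maurer--Cartan form of $\mathsf{G}(\mathbb{R})$ onto its $\mathfrak{k}$-component. The structure equation $d\omega + \tfrac{1}{2}[\omega,\omega] = 0$, together with $[\mathfrak{k},\mathfrak{k}] \subseteq \mathfrak{k}$ and $[\mathfrak{p},\mathfrak{p}] \subseteq \mathfrak{k}$, shows that the curvature equals $\Omega = -\tfrac{1}{2}[\omega_{\mathfrak{p}}, \omega_{\mathfrak{p}}]_{\mathfrak{k}}$. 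This connection is $\mathsf{G}(\mathbb{R})$-invariant, so $P(\Omega)$ is a $\mathsf{G}(\mathbb{R})$-invariant form on $\mathsf{X}$, i.e., lies in $\mathsf{I}_{\mathsf{G}(\mathbb{R})}^{\ast}$. In particular, the descended class in $H^{\ast}(\Gamma, \mathbb{C})$ arises, by the very construction of $\mathrm{Bo}_{\Gamma}^{\ast}$, from the element of $\mathsf{I}_{\mathsf{G}(\mathbb{R})}^{\ast} \cong H^{\ast}(\mathfrak{g},\mathfrak{k},\mathbb{C})$ defined by the multilinear map $(X_1, \ldots, X_{2j}) \mapsto P(-\tfrac{1}{2}[X_{\bullet},X_{\bullet}]_{\mathfrak{k}})$ (suitably antisymmetrised) on $\mathfrak{p}$. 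An identical argument on the compact dual, using the decomposition $\mathfrak{g}^u = \mathfrak{k} \oplus i\mathfrak{p}$, produces a curvature $\Omega^u = -\tfrac{1}{2}[\omega_{i\mathfrak{p}}, \omega_{i\mathfrak{p}}]_{\mathfrak{k}}$ and shows that the down-right arrow of \eqref{A.5} sends $P$ to the class in $\mathsf{I}_{\mathsf{G}(\mathbb{R})^u}^{\ast} \cong H^{\ast}(\mathfrak{g}^u,\mathfrak{k},\mathbb{C})$ defined by the analogous multilinear map on $i\mathfrak{p}$.

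The decisive step is then to compare these two invariant forms under the identification that defines $\mathrm{Bo}^{\ast}_{\mathrm{ct},\Gamma}$. Recall that the complexes $\mathrm{Hom}_{\mathfrak{k}}(\bigwedge^{\ast} \mathfrak{p}, \mathbb{C})$ and $\mathrm{Hom}_{\mathfrak{k}}(\bigwedge^{\ast} i\mathfrak{p}, \mathbb{C})$ are identified via the $\mathbb{R}$-linear rescaling $X \mapsto iX$ on $\mathfrak{p}$, and this is exactly the isomorphism built into the Borel map. For $X,Y \in \mathfrak{p}$ one has $[iX,iY] = -[X,Y]$, so evaluating a degree-$j$ invariant polynomial on the two curvatures yields the same cochain up to a global factor $(-1)^j$ that is absorbed by the standard $(2\pi i)^{-j}$ normalization used in Chern--Weil theory. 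This produces the identity $\mathrm{Bo}_{\mathrm{ct},\Gamma}^{\ast}(P(\Omega^u)) = P(\Omega)$ in cohomology, which is precisely the commutativity of \eqref{A.5}.

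The main obstacle is not conceptual but bookkeeping: one must align the sign in $[iX,iY] = -[X,Y]$, the scaling factor in the Chern--Weil normalization, and the precise normalizations used to define $\alpha_{\mathsf{G}(\mathbb{R})}$ and $\alpha_{\mathsf{G}(\mathbb{R})^u}$ in Section~\ref{appendixA.1}. Once the conventions are fixed, the comparison is a routine calculation, and the commutativity follows because both composites compute the same Chern--Weil representative of $P$ against the common curvature datum obtained from the Cartan decomposition.
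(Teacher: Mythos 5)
Your strategy is genuinely different from the paper's. You fix two canonical Maurer--Cartan connections, one on $\mathsf{G}(\mathbb{R}) \to \mathsf{X}$ from $\mathfrak{g} = \mathfrak{k}\oplus\mathfrak{p}$ and one on $\mathsf{G}(\mathbb{R})^u \to \mathrm{CT}(\mathsf{G}(\mathbb{R}))$ from $\mathfrak{g}^u = \mathfrak{k}\oplus i\mathfrak{p}$, compute both curvatures, and then compare them across the isomorphism $X\mapsto iX$. The paper instead fixes a \emph{single} left-invariant connection on the larger bundle $\mathsf{G}(\mathbb{C}) \to \mathsf{G}(\mathbb{C})/\mathsf{K}$ and appeals twice to the commutative triangle \eqref{A.6} (with $G = \mathsf{G}(\mathbb{C})$ and $H = \mathsf{G}(\mathbb{R})$, resp.\ $H = \mathsf{G}(\mathbb{R})^u$). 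That makes both solid arrows of \eqref{A.5} factor through the \emph{same} invariant form on $\Omega^{\ast}(\mathsf{G}(\mathbb{C})/\mathsf{K})^{\mathsf{G}(\mathbb{C})}$, so that the final step reduces to matching the two restrictions of a single form to $\mathfrak{p}$ and to $i\mathfrak{p}$ through the identification used to define $\mathrm{Bo}^{\ast}_{\mathrm{ct},\Gamma}$. Your route is more explicit but forces you to compare two independently defined curvatures, which is where the trouble enters.

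There is a genuine gap at your last step. You correctly observe that $[iX,iY] = -[X,Y]$, so that the two curvatures agree up to a sign, and hence the Chern--Weil representatives of a degree-$j$ invariant polynomial agree only up to $(-1)^j$ after transporting along $X\mapsto iX$. You then claim this factor ``is absorbed by the standard $(2\pi i)^{-j}$ normalization used in Chern--Weil theory.'' This cannot be right as stated: the $(2\pi i)^{-j}$ normalization is a fixed constant applied uniformly to \emph{both} Chern--Weil maps (they both land in $H^{\ast}(B\mathsf{K},\mathbb{C})$ via the same universal-bundle convention), so it multiplies both composites by the same scalar and cannot cancel a factor of $(-1)^j$ that distinguishes them. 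To close the argument you would either need to exhibit a normalization of the Borel isomorphism $\alpha_{\mathsf{G}(\mathbb{R})^u}$/$\alpha_{\mathsf{G}(\mathbb{R})}$ in Appendix~\ref{appendixA.1} that builds in the sign, or follow the paper in factoring through $\mathsf{G}(\mathbb{C})$ via \eqref{A.6}, where the comparison is of two restrictions of the same form rather than of two a priori unrelated curvatures. As written, your proof establishes commutativity only up to a degree-dependent sign, which is weaker than the stated proposition.
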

	
To prove this assertion, we utilize the Chern-Weil theory \cite[Ch.5]{gil}, which describes the cohomology of the classifying space and the map associated with any principal bundle using the de Rham model. Let $K$ be a compact connected Lie group and $G$ be a connected Lie group containing $K$ as a closed subgroup. We consider the principal $K$-bundle $G \to G/K$. Suppose that $\mathfrak{k}$, resp. $\mathfrak{g}$, is the Lie algebra attached to $K$, resp. $G$. Given a connection form $\nabla \in \Omega^{1}\big(G, \mathfrak{k} \otimes_{\mathbb{R}} \mathbb{C}\big)$ for the principal $K$-bundle $G \to G/K$ there is a Chern-Weil homomorphism 
\[f_{\nabla}: W(K, \mathbb{C}) \to \Omega^{\ast}(G)\]
where $W(K, \mathbb{C})$ is the Weil algebra with complex coefficients. The Chern-Weil morphism maps the subalgebra of invariant elements in Weil algebra, denoted $I_{K}^{\ast}$, inside the subalgebra of the basic elements of $\Omega^{\ast}(G)$, which in terms can be identified with $\Omega^{\ast}(G/K)$. Since the left $G$-action and the right $K$-action on $G$ commute with each other, it is possible to choose $\nabla$ to be invariant under the left $G$-action. From now onward, we fix the choice of a $G$-invariant connection form and compute the Chern-Weil morphism using it. Then $f_{\nabla}$ maps $I_{K}^{\ast}$ inside $\Omega^{\ast}(G/K)^G$. Evaluation at the coset of identity in $G/K$ yields a natural isomorphism $\Omega^{\ast}(G/K)^G \cong \Omega^{\ast}(\mathfrak{g}, \mathfrak{k}, \mathbb{C})$ where the complex on the right-hand side is the relative Lie algebra cohomology complex. As a consequence, we obtain a well-defined map $f_{\nabla}: I_{K}^{\ast} \to \Omega^{\ast}(\mathfrak{g}, \mathfrak{k}, \mathbb{C})$. Here the elements of $I_K^{\ast}$ are closed and $f_{\nabla}$ descends to a map of graded algebras $I_K^{\ast} \to H^{\ast}(\mathfrak{g}, \mathfrak{k}, \mathbb{C})$. If $G$ is compact, then a standard averaging argument shows that $H^{\ast}(\mathfrak{g}, \mathfrak{k}, \mathbb{C}) \cong H^{\ast}(G/K, \mathbb{C})$.
	
Now suppose $H$ is a closed Lie subgroup of $G$ containing $K$. One chooses a left invariant connection form on $G$ and restricts it to $H$ to obtain a commutative diagram   
\begin{equation}
\label{A.6}
\begin{tikzcd}[column sep= large]
I_{K}^{\ast} \arrow[r, "f_{\nabla}"] \arrow[dr, "f_{\nabla}"] & \Omega^{\ast}(G/K)^G \arrow[d]\\
& \Omega^{\ast}(H/K)^H
\end{tikzcd} 
\end{equation}
where the vertical arrow is the pullback map. Similarly, if $\Gamma$ is a torsion-free discrete subgroup of $G$, then we push forward the $G$-invariant connection on $G$ to $\Gamma \backslash G \to \Gamma \backslash G /K$ to construct a Chern-Weil morphism using it.
If $G/K$ is contractible, then there is an isomorphism $\Omega^{\ast}(\Gamma \backslash G/K) \cong \Omega^{\ast}(G/K)^{\Gamma}$. In this situation, we have a commutative diagram  
\begin{equation*}	
\begin{tikzcd}[column sep= large]
I_{K}^{\ast} \arrow[r, "f_{\nabla}"] \arrow[dr, "f_{\nabla}"] & \Omega^{\ast}(G/K)^G \arrow[d]\\
& \Omega^{\ast}(\Gamma \backslash G/K) \cong \Omega^{\ast}(G/K)^{\Gamma} 
\end{tikzcd} 
\end{equation*}
where the vertical arrow is the inclusion map. 
	
\textbf{Proof of Proposition~\ref{propositionA.3}.} We fix a left invariant connection $\nabla$ for the principal $\mathsf{K}$-bundle $\mathsf{G}(\mathbb{C}) \to \mathsf{G}(\mathbb{C})/\mathsf{K}$ and pullback to obtain connections for $\mathsf{G}(\mathbb{R}) \to \mathsf{G}(\mathbb{R})/\mathsf{K}$, $\mathsf{G}(\mathbb{R})^{u} \to \mathsf{G}(\mathbb{R})^{u}/\mathsf{K}$, and $\Gamma \backslash \mathsf{G}(\mathbb{R}) \to \Gamma \backslash \mathsf{G}(\mathbb{R})/\mathsf{K}$. Recall that the Chern-Weil map for the universal bundle $E\mathsf{K} \to B\mathsf{K}$ induces an isomorphism $I^{\ast}_{\mathsf{K}} \cong H^{\ast}\big(B\mathsf{K}, \mathbb{C}\big)$. Thus, the Chern-Weil maps attached to $\Gamma \backslash \mathsf{G}(\mathbb{R})/\mathsf{K}$ and $\mathsf{G}(\mathbb{R})^{u}/\mathsf{K}$ compute the solid arrows in \eqref{A.5}. Next we want to compare the Chern-Weil maps for $\big(\mathsf{G}(\mathbb{R}), \mathsf{K}\big)$ and $\big(\mathsf{G}(\mathbb{R})^u, \mathsf{K}\big)$ by relating both of them with the Chern-Weil morphism for $\big(\mathsf{G}(\mathbb{C}), \mathsf{K}\big)$; see \eqref{A.6}. Note that 
\begin{gather*}
\Omega^{\ast}\big(\mathfrak{g}_{\mathbb{C}}, \mathfrak{k}, \mathbb{C}\big) \cong \text{Hom}_{\mathfrak{k}}\big(\bigwedge^{\ast}(\mathfrak{g}_{\mathbb{C}}/\mathfrak{k}), \mathbb{C}\big) \cong \text{Hom}_{\mathfrak{k}}\big(\bigwedge^{\ast}(\mathfrak{p} \oplus i\mathfrak{p} \oplus i\mathfrak{k}), \mathbb{C}\big),\\
\Omega^{\ast}\big(\mathfrak{g}, \mathfrak{k}, \mathbb{C}\big) \cong \text{Hom}_{\mathfrak{k}}\big(\bigwedge^{\ast}\mathfrak{p}, \mathbb{C}\big), \hspace{.3cm}\Omega^{\ast}\big(\mathfrak{g}_u, \mathfrak{k}, \mathbb{C}\big) \cong \text{Hom}_{\mathfrak{k}}\big(\bigwedge^{\ast}i\mathfrak{p}, \mathbb{C}\big). 
\end{gather*}
The pullback $\Omega^{\ast}\big(\mathfrak{g}_{\mathbb{C}}, \mathfrak{k}, \mathbb{C}\big) \to \Omega^{\ast}\big(\mathfrak{g}, \mathfrak{k}, \mathbb{C}\big)$, resp. $\Omega^{\ast}\big(\mathfrak{g}_{\mathbb{C}}, \mathfrak{k}, \mathbb{C}\big) \to \Omega^{\ast}\big(\mathfrak{g}_u, \mathfrak{k}, \mathbb{C}\big)$, is induced by the natural projection of $\mathfrak{p} \oplus i\mathfrak{p} \oplus i\mathfrak{k}$ onto $\mathfrak{p}$, resp. $i\mathfrak{p}$, components. Proof of the assertion now follows from the construction of the Borel map described at the beginning of the section. \hfill $\square$

\subsection{Explicit cohomology rings}
\label{appendixA.2}
We list the cohomology rings of certain compact Lie groups and their homogeneous spaces \cite[III.6]{mimura} that are relevant to this article.  
	
\paragraph{$\text{SU}_n$:} Let $n \geq 1$. An inductive argument using Serre spectral sequence demonstrates that there exists homogeneous integral classes $\{\alpha_{2p-1} \mid 2 \leq p \leq n\}$ with $\text{deg}(\alpha_{2p-1}) = 2p-1$ so that $H^{\ast}\big(\text{SU}_n, \mathbb{Q}\big)$ is isomorphic to the free exterior algebra generated by $\{\alpha_{2p-1} \mid 2 \leq p \leq n\}$. The classes $\alpha_{2p-1}$ are compatible with stabilization, i.e., the pullback of $\alpha_{2p-1}$ along $\text{SU}_{n-1} \xhookrightarrow{} \text{SU}_n$ equals $\alpha_{2p-1} \in H^{\ast}\big(\text{SU}_{n-1}, \mathbb{Q}\big)$ for each $2 \leq p \leq n-1$. In particular, the stable cohomology ring is given by 
$H^{\ast}\big(\text{SU}_{\infty}, \mathbb{Q}\big) \cong \Lambda_{\mathbb{Q}}[\alpha_{2p-1} \mid p \geq 2]$.
We also need an appropriate de Rham model for these classes to perform effective computations. Let $\mathfrak{su}_n \subseteq \mathfrak{gl}_n(\mathbb{C})$ denote the real Lie algebra of $\text{SU}_n$. For each $2 \leq p \leq n$ define $\Phi_{2p-1} \in \text{Hom}_{\mathbb{R}}(\wedge^{2p-1} \mathfrak{su}_n, \mathbb{C})$ by 
\begin{equation}
\label{A.7}
\Phi_{2p-1}(X_1, \ldots, X_{2p-1}) = \sum_{\tau \in S_{2p-1}} \text{sgn}(\tau)\text{Tr}(X_{\tau(1)} \circ \cdots \circ X_{\tau(2p-1)})
\end{equation}
where $S_{2p-1}$ is the symmetric group on $(2p-1)$ letters. The alternating form considered above extends to a closed, bi-invariant form on $\text{SU}_n$ \cite[VI.7]{bookvol3}. Let $[\Phi_{2p-1}]$ denote the cohomology class attached to $\Phi_{2p-1}$. Then there exists a nonzero complex number $C(p)$ depending only on $p$ so that $C(p)[\Phi_{2p-1}]$ equals $\alpha_{2p-1}$ inside $H^{\ast}\big(\text{SU}_n, \mathbb{C}\big)$ \cite[p.48]{gil}.
	
\paragraph{$\text{SU}_n/\text{SO}_n$:} Let $n \geq 1$. There exists rational classes $\{\alpha_{2p-1} \mid 2 \leq p \leq n,\, \text{$p$ odd}\}$ with $\text{deg}(\alpha_{2p-1}) = 2p-1$ so that $H^{\ast}\big(\text{SU}_n/\text{SO}_n, \mathbb{Q}\big)$ contains the free exterior algebra generated by $\{\alpha_{2p-1} \mid 2 \leq p \leq n,\, \text{$p$ odd}\}$ as a subalgebra. If $n$ is odd then $H^{\ast}\big(\text{SU}_n/\text{SO}_n, \mathbb{Q}\big)$ equals the subalgebra described above. Suppose that $n$ is even. Then there exists an Euler class $\chi \in H^{n}\big(B\text{SO}_n, \mathbb{Z}\big)$ that spans a direct summand of the $\mathbb{Z}$-module $H^{\ast}\big(B\text{SO}_n, \mathbb{Z}\big)$. Now the principal $\text{SO}_n$ bundle $\text{SU}_n \to \text{SU}_n/\text{SO}_n$ gives rise to a map $\text{SU}_n/\text{SO}_n \to B\text{SO}_n$. Let $\varepsilon_{n}$ denote the pullback of $\chi$ in the cohomology of $\text{SU}_n/\text{SO}_n$. Then, we have    
\[H^{\ast}\big(\text{SU}_n/\text{SO}_n, \mathbb{Q}\big) \cong \Lambda_{\mathbb{Q}}[\alpha_{2p-1} \mid 2 \leq p \leq n, \text{$p$ odd}] \otimes_{\mathbb{Q}} \Delta[\varepsilon_{n}]\]
where $\Delta[\varepsilon_{n}] = \mathbb{Q} \oplus \mathbb{Q} \varepsilon_{n}$ is a $\mathbb{Q}$-subalgebra of the cohomology ring. Let $\pi^{\text{SO}}_{n}: \text{SU}_n \to \text{SU}_n /\text{SO}_n$ be the natural quotient map. The $\alpha_{2p-1}$ classes for $\text{SU}_n/\text{SO}_n$ are related with their namesakes in the cohomology of $\text{SU}_n$ in the expected manner, i.e., $(\pi^{\text{SO}}_{n})^{\ast}(\alpha_{2p-1}) = \alpha_{2p-1}$ for each $p$ in range. In particular the $\alpha_{2p-1}$ classes for $\text{SU}_n/\text{SO}_n$ are also compatible with stabilization and there is an isomorphism $H^{\ast}\big(\text{SU}_{\infty}/\text{SO}_{\infty}, \mathbb{Q}\big) \cong \Lambda_{\mathbb{Q}}[\alpha_{2p-1} \mid  p \geq 2, \text{ $p$ odd}]$. 
	
\paragraph{$\text{SU}_{2n}/\text{Sp}_n$:} Let $n \geq 1$ and $\mathbb{H}$ denote the quarternion algebra over algebra over $\mathbb{R}$. Write $v \in \mathbb{H}^n$ as $v = a + ib + jc + kd$ and define two real linear maps $z_1, z_2: \mathbb{H}^n \to \mathbb{C}^n$ by $z_1(v) = a+ ib$, $z_2(v) = c - id$ so that $v = z_1(v) + j z_2(v)$. There is a natural $\ast$-compatible embedding of $\mathbb{R}$-algebras
\begin{equation}
\label{A.8}
M_n(\mathbb{H}) \to M_{2n}(\mathbb{C}), \hspace{.3cm} X = z_1(X) +j z_2(X) \mapsto \begin{pmatrix}
z_{1}(X) & - \overline{z_2(X)}\\
z_2(X) & \overline{z_1(X)}
\end{pmatrix}
\end{equation}  
that identifies $M_n(\mathbb{H})$ with the $\mathbb{R}$-subalgebra $\Big\{\begin{pmatrix}
		A & - \bar{B}\\
		B & \bar{A}
	\end{pmatrix} \mid A, B \in M_n(\mathbb{C})\Big\}$ \cite[I.2]{mimura}. 
In this picture the unitary group over the quaternions is $\text{Sp}_n = M_n(\mathbb{H}) \cap \text{SU}_{2n}$. Let $\pi_{n}^{\text{Sp}}: \text{SU}_{2n} \to \text{SU}_{2n}/\text{Sp}_n$ be the natural quotient map. Then the pullback \[\pi_{n}^{\text{Sp}, \ast}: H^{\ast}\big(\text{SU}_{2n}/\text{Sp}_n, \mathbb{Q}\big) \to H^{\ast}\big(\text{SU}_{2n}, \mathbb{Q}\big)\] 
is injective, and its image equals the free subalgebra generated by $\{\alpha_{2p-1} \mid 2 \leq p \leq 2n, \text{$p$ odd}\}$. For convenience, we denote the preimage of $\alpha_{2p-1}$ by the same symbol. Thus $H^{\ast}\big(\text{SU}_{2n}/\text{Sp}_n, \mathbb{Q}\big) \cong \Lambda_{\mathbb{Q}}[\alpha_{2p-1} \mid 2 \leq p \leq 2n, \text{$p$ odd}]$.

\end{document}